\let\OLDthebibliography\thebibliography
\renewcommand\thebibliography[1]{
	\OLDthebibliography{#1}
	\setlength{\parskip}{0pt}
	\setlength{\itemsep}{2pt} 
}
\theoremstyle{definition}
\newtheorem{df}{Definition}[section]
\newtheorem{eg}[df]{Example}
\newtheorem{rem}[df]{Remark}
\theoremstyle{plain}
\newtheorem{thm}[df]{Theorem}
\newtheorem{pp}[df]{Proposition}
\newtheorem{co}[df]{Corollary}
\newtheorem{lm}[df]{Lemma}
\DeclareMathOperator{\shom}{\mathscr{H}\text{\kern -3pt {\calligra\large om}}\,}
\DeclareMathOperator{\sext}{\mathscr{E}\text{\kern -3pt {\calligra\large xt}}\,}
\DeclareMathOperator{\Rel}{\mathscr{R}\text{\kern -3pt {\calligra\large el}~}\,}
\DeclareMathOperator{\sann}{\mathscr{A}\text{\kern -3pt {\calligra\large nn}}\,}
\DeclareMathOperator{\send}{\mathscr{E}\text{\kern -3pt {\calligra\large nd}}\,}
\DeclareMathOperator{\stor}{\mathscr{T}\text{\kern -3pt {\calligra\large or}}\,}
\DeclareMathOperator{\VVir}{\text{\Fontlukas V}\text{\kern -0pt {\Fontlukas\large ir}}\,}
\newcommand{\fk}{\mathfrak}
\newcommand{\mc}{\mathcal}
\newcommand{\wtd}{\widetilde}
\newcommand{\wht}{\widehat}
\newcommand{\wch}{\widecheck}
\newcommand{\ovl}{\overline}
\newcommand{\Tr}{\mathrm{Tr}}
\newcommand{\End}{\mathrm{End}} %endomorphism
\newcommand{\id}{\mathrm{id}}
\newcommand{\Hom}{\mathrm{Hom}}
\newcommand{\opp}{\mathrm{op}}
\newcommand{\scr}{\mathscr}
\newcommand{\blt}{\bullet}
\newcommand{\Vbb}{\mathbb V}
\newcommand{\Mbb}{\mathbb M}
\newcommand{\Gbb}{\mathbb G}
\newcommand{\Cbb}{\mathbb C}
\newcommand{\Nbb}{\mathbb N}
\newcommand{\Zbb}{\mathbb Z}
\newcommand{\Ebb}{\mathbb E}
\newcommand{\Ker}{\mathrm{Ker}}
\newcommand{\Mod}{\mathrm{Mod}}
\newcommand{\ModR}{\mathrm{Mod}^{\mathrm R}}
\newcommand{\dps}{\displaystyle}
\newcommand{\eps}{\varepsilon}
\newcommand{\QC}{\mathrm{QCoh}_{\mathrm L}}
\newcommand{\Coh}{\mathrm{Coh}_{\mathrm L}}
\newcommand{\rad}{\mathrm{rad}}
\newcommand{\SLF}{\mathrm{SLF}}
\newcommand{\trc}{\mathrm{tr}}
\numberwithin{equation}{section}
\title{Pseudotraces on Almost Unital and Finite-Dimensional Algebras}
\author{{\sc Bin Gui, Hao Zhang}
	%\\
	%{\small Department of Mathematics, Rutgers university}\\
	%{\small bin.gui@rutgers.edu}
}
\date{}
\begin{document}\sloppy % avoid stretch into margins
	\pagenumbering{arabic}
	%\pagenumbering{gobble}
	\setcounter{page}{1}
	\setcounter{section}{-1}
	%\setcounter{equation}{6}

	%%%%%%%%%%%%%%%%%%%%%%%%%%%%%%%%%%%%%%%%%%%%%%%%%%%%%%%%%

	\maketitle
%\small   \hyperlink{page.7}{Last page of TOC}

%\hyperlink{beforeindex}{Last page before index}~~~~~~  
%\hypertarget{beforeindex}{}

%%%%%%%%%%%%%%%%%%%%%%%%%%%%%
%\vspace{-0.5cm}

%\makeatletter
%\newcommand*{\toccontents}{\@starttoc{toc}}
%\makeatother
%\toccontents

% title and table of contents same page, no content title

%%%%%%%%%%%%%%%%%%%%%%%%%%%%%

\normalsize

%\hyperlink{beforeindex}{Current page of writing}~~~~~~ 
%\hypertarget{beforeindex}{}

\begin{abstract}
We introduce the notion of almost unital and finite-dimensional (AUF) algebras, which are associative $\Cbb$-algebras that may be non-unital or infinite-dimensional, but have sufficiently many idempotents. We show that the pseudotrace construction, originally introduced by Hattori and Stallings for unital finite-dimensional algebras, can be generalized to AUF algebras. 

Let $A$ be an AUF algebra. Suppose that $G$ is a projective generator in the category $\Coh(A)$ of finitely generated left $A$-modules that are quotients of free left $A$-modules, and let $B = \End_{A,-}(G)^\opp$. We prove that the pseudotrace construction yields an isomorphism between the spaces of symmetric linear functionals $\SLF(A)\xlongrightarrow{\simeq} \SLF(B)$, and that the non-degeneracies on the two sides are equivalent.
\end{abstract}

\tableofcontents

\section{Introduction}

In \cite{Miy-modular-invariance}, Miyamoto introduced the pseudo-$q$-trace construction for modules of vertex operator algebras (VOAs), generalizing the usual $q$-trace. His primary motivation was to address the failure of modular invariance for $q$-traces in the case of $C_2$-cofinite but irrational VOAs. While Zhu's theorem in \cite{Zhu-modular-invariance} establishes modular invariance for $q$-traces in the rational setting, this result does not extend to the irrational case---unless $q$-traces are replaced with pseudo-$q$-traces.

Miyamoto's original approach is quite involved. Moreover, his dimension formula for the space of torus conformal blocks is expressed in terms of higher Zhu algebras. This presents two drawbacks: first, higher Zhu algebras are difficult to compute in practice; second, their connection to the VOA module category is not transparent.

Later, Arike \cite{Ari10} and Arike-Nagatomo \cite{AN-pseudo-trace} introduced a simplified version of the pseudo-$q$-trace construction based on the idea of Hattori \cite{Hattori65} and Stallings \cite{Stallings}. Below, we briefly outline this approach.

Let $A$ be an algebra, and let $B$ be a unital finite-dimensional algebra. Let $M$ be a finite-dimensional $A$-$B$ bimodule, projective as a right $B$-module. By the projectivity, there is a (finite) left coordinate system of $M$, namely, elements $\alpha_1,\dots,\alpha_n\in\Hom_B(B,M)$ and $\wch\alpha^1,\dots,\wch\alpha^n\in\Hom_B(M,B)$ satisfying $\sum_i\alpha_i\circ\wch\alpha^i=\id_M$. Then the linear map 
\begin{gather*}
A\rightarrow B\qquad x\mapsto\sum_i \wch\alpha^i\circ x\circ\alpha_i(1_B)
\end{gather*}
descends to a linear map $A/[A,A]\rightarrow B/[B,B]$ which is independent of the choice of the left coordinate system. Its pullback gives a linear map
\begin{align}\label{eq18}
\SLF(B)\rightarrow\SLF(A)\qquad \phi\mapsto\Tr^\phi
\end{align}
where $\SLF(A)$ is the space of symmetric linear functionals on $A$---that is, linear maps $\psi:A\rightarrow\Cbb$ satisfying $\psi(xy)=\psi(yx)$ for all $x,y\in A$---and $\SLF(B)$ is the space of symmetric linear functionals on $B$. The above map is called the \textbf{pseudotrace construction}. Note that a typical choice of $A$ is $\End_B(M)$.

The pseudotrace construction is applied to the VOA setting as follows. Let $\Vbb$ be an $\Nbb$-graded $C_2$-cofinite VOA with central charge $c$, and let $\Mbb$ be a grading-restricted generalized $\Vbb$-module. Then $\Mbb$ admits a decomposition $\Mbb=\bigoplus_{\lambda\in\Cbb}\Mbb_{[\lambda]}$ into generalized eigenspaces of $L(0)$, where each $\Mbb_{[\lambda]}$ is finite-dimensional. Let $\End_\Vbb(\Mbb)$ be the algebra of linear operators on $\Mbb$ commuting with the action of $\Vbb$, which is necessarily unital and finite-dimensional. Let $B$ be a unital subalgebra of $\End_\Vbb(\Mbb)^\opp$. Assume that $\Mbb$ is a projective right $B$-module, equivalently, each $\Mbb_{[\lambda]}$ is $B$-projective. Let $\phi\in\SLF(B)$. Then for $v\in\Vbb$, the expression
\begin{align}\label{eq17}
\Tr^\phi(Y_\Mbb(v,z)q^{L(0)-\frac c{24}})=\sum_{\lambda\in\Cbb}\Tr^\phi\big(P(\lambda)Y_\Mbb(v,z)q^{L(0)-\frac c{24}}P(\lambda)\big)
\end{align}
converges absolutely for $z\in\Cbb$ and $0<|q|<1$, and defines a torus conformal block. Here, $P(\lambda)$ is the projection of $\ovl\Mbb:=\prod_{\mu\in\Cbb}\Mbb_{[\mu]}$ onto $\Mbb_{[\mu]}$. Then each $P(\lambda)Y_\Mbb(v,z)q^{L(0)-\frac c{24}}P(\lambda)$ is a linear operator on $\Mbb_{[\lambda]}$ commuting with the right action of $B$, and hence $\Tr^\phi$ can be defined on it.

Based on this formulation, in \cite[Conjecture 5.8]{GR-Verlinde}, Gainutdinov and Runkel proposed a conjecture that directly relates the space of torus conformal blocks of a $C_2$-cofinite VOA $\Vbb$ to the linear structure of the category $\Mod(\Vbb)$ of grading-restricted generalized $\Vbb$-modules. Let $\Gbb$ be a projective generator in $\Mod(\Vbb)$, and let $B=\End_\Vbb(\Gbb)$. Then $\Gbb$ is $B$-projective. The conjecture asserts that the linear map sending each $\phi\in\SLF(B)$ to \eqref{eq17} defines an isomorphism between $\SLF(B)$ and the space of torus conformal blocks of $\Vbb$.

The purpose of this note is to establish results in the theory of associative algebras that are essential for proving the Gainutdinov-Runkel conjecture. The actual resolution of the conjecture will appear in the forthcoming paper \cite{GZ5}.

Our approach stems from recognizing a structural analogy between the Gainutdinov-Runkel conjecture and a classical result in associative algebra: If $A$ is a unital finite-dimensional algebra and $M$ is a projective generator in the category of finite-dimensional left $A$-modules, then $M$ is projective over $B:=\End_A(M)^\opp$, and the pseudotrace map \eqref{eq18} is a linear isomorphism. This result was suggested in \cite[Sec. 2]{BBG-modified-trace} and was proved in \cite{Ari10} in the special case that $M=Ae$ where $e$ is a basic idempotent.

However, this classical result is not directly applicable to the Gainutdinov-Runkel conjecture. We need to generalize it to a larger class of associative algebras than unital finite-dimensional ones. In particular, we must consider infinite-dimensional algebras that can be approximated, in a certain sense, by finite-dimensional (and possibly unital) algebras. The need to consider infinite-dimensional associative algebras in the study of irrational VOAs has also been recognized in recent years from different perspectives, such as Huang's associative algebra $A^\infty(\Vbb)$ introduced in \cite{Hua-associative}, and the mode transition algebra introduced by Damiolini-Gibney-Krashen in \cite{DGK2}.

The infinite-dimensional algebra required for the proof of the Gainutdinov-Runkel conjecture is different from the above mentioned algebras. In \cite{GZ5}, we will show that the end
\begin{align*}
\dps\Ebb:=\int_{\Mbb\in\Mod(\Vbb)}\Mbb\otimes_\Cbb\Mbb'
\end{align*}
a priori an object of $\Mod(\Vbb^{\otimes2})$, carries a structure of an associative $\Cbb$-algebra that is compatible with its $\Vbb^{\otimes2}$-module structure. This algebra $\Ebb$ is an example of an \textbf{almost unital and finite-dimensional algebra}\footnote{Here, ``almost" modifies the entire phrase ``unital and finite-dimensional", not just ``unital".} (abbreviated as \textbf{AUF algebra}), meaning that $\Ebb$ has a collection of mutually orthogonal idempotents $(e_i)_{i\in\fk I}$ such that $\Ebb=\sum_{i,j\in\fk I}e_i\Ebb e_j$ where each summand $e_i\Ebb e_j$ is finite-dimensional. (This sum is automatically direct.) In fact, $\Ebb$ has only finitely many irreducibles. We call such an algebra \textbf{strongly AUF}.

The main result of this note is a generalization of the aforementioned isomorphism between spaces of symmetric linear functionals to the setting of strongly AUF algebras. More precisely, we prove that the pseudotrace construction defines a linear isomorphism $\SLF(B)\simeq\SLF(A)$ where $A$ is strongly AUF, $M$ is a projective generator of the category $\Coh(A)$ of \textbf{coherent left $A$-modules} (i.e., finitely generated left $A$-modules that are quotients of free ones), and $B=\End_A(M)^\opp$. See Thm. \ref{lb44}. Moreover, we show that the symmetric linear functional on $B$ is non-degenerate if and only if the corresponding functional on $A$ is non-degenerate. See Thm. \ref{lb64}.

Since the associative algebra structure on the end $\Ebb$ will not be developed in this note, we present some alternative examples of AUF algebras for illustration. Let $U(\Vbb)$ be the universal algebra of $\Vbb$ as defined in \cite{FZ92-universal-alg}. Let
\begin{align*}
U(\Vbb)^{\mathrm{reg}}=\bigoplus_{\lambda,\mu\in\Cbb}U(\Vbb)_{[\lambda,\mu]}
\end{align*}
where $U(\Vbb)_{[\lambda,\mu]}$ is the subspace of joint generalized-eigenvectors of the left and right actions of $L(0)$ corresponding to the eigenvalues $\lambda$ and $\mu$ respectively. The following properties are shown in \cite{MNT10}: Each $U(\Vbb)_{[\lambda,\mu]}$ is finite-dimensional. For each $\lambda,\mu,\nu\in\Cbb$ one has
\begin{align*}
U(\Vbb)_{[\lambda,\mu]}U(\Vbb)_{[\mu,\nu]}\subset U(\Vbb)_{[\lambda,\nu]}
\end{align*}
In particular, $U(\Vbb)^{\mathrm{reg}}$ is a subalgebra of $U(\Vbb)$. Moreover, there is an increasing sequence of idempotents $(1_n)_{n\in\Zbb_+}$ such that $U(\Vbb)^{\mathrm{reg}}=\bigcup_n1_nU(\Vbb)^{\mathrm{reg}}1_n$. (See \cite[Sec. 2.6]{MNT10}.) Therefore, $U(\Vbb)^{\mathrm{reg}}$ is AUF, since the family of orthogonal idempotents in the definition of AUF algebras can be chosen to be $(1_{n+1}-1_n)_{n\in\Zbb_+}$.

For a more elementary and concrete example, consider the following. Let $B$ be a unital finite-dimensional algebra. Let $M$ be a right $B$-module. Equip $M$ with a grading
\begin{align*}
M=\bigoplus_{i\in\fk I}M(i)
\end{align*}
where each $M(i)$ is finite-dimensional and is preserved by the right action of $B$. Let $A$ be
\begin{align*}
\End^0_B(M):=\{T\in\End(M):&(Tm)b=T(mb)\text{ for all $m\in M,b\in B$,}\\
&T|_{M(i)}=0\text{ for all but finitely many }i\in\fk I \}
\end{align*}
Then $A$ is clearly an AUF algebra, with the family of mutually orthogonal idempotents given by the projections $e_i$ of $M$ onto $M(i)$. 

In fact, any strongly AUF algebra arises from such a construction. More precisely, an algebra is strongly AUF if and only if it is isomorphic to some $\End^0_B(M)$, where $M$ and $B$ satisfy the above conditions and, in addition, $M$ is a projective generator in the category of right $B$-modules. See Thm. \ref{lb65}.

Note that the relationship between $\End^0_B(M)$ and $C_2$-cofinite VOAs is straightforward: If $\Mbb\in\Mod(\Vbb)$ is equipped with the grading $\bigoplus_{\lambda\in\Cbb}\Mbb_{[\lambda]}$ given by the generalized eigenspaces of $L(0)$, and if $B$ is a unital subalgebra of $\End_\Vbb(\Mbb)^\opp$ such that $\Mbb$ is projective as a right $B$-module, then each $P(\lambda)Y_\Mbb(v,z)q^{L(0)-\frac c{24}}P(\lambda)$ appearing in \eqref{eq17} lies in $\End^0_B(\Mbb)$. Therefore, the main result of this note on pseudotraces (Thm. \ref{lb64}) can be applied to $C_2$-cofinite VOAs. Details of this application will be presented in \cite{GZ5}.

\subsection*{Acknowledgment}

We are grateful to the referee for the careful report and valuable suggestions. B.G. is supported by NSFC Grant 12401159.

\section{Preliminaries}

Throughout this note, algebras are associative, not necessarily unital, and over $\Cbb$. Let $\Nbb=\{0,1,2,\dots\}$ and $\Zbb_+=\{1,2,\dots\}$. For any vector spaces $V,W$, we let $\Hom(V,W)=\Hom_\Cbb(V,W)$ be the space of linear maps $V\rightarrow W$, and let $\End(V)=\Hom(V,V)$.

Let $A$ be an algebra. Its opposite algebra is denoted by $A^\opp$. If $M,N$ are left (resp. right) $A$-modules, we let $\Hom_{A,-}(M,N)$ (resp. $\Hom_{-,A}(M,N)$) be the space of linear maps $M\rightarrow N$ intertwining the left (resp. right) actions of $A$.

An \textbf{idempotent} $e\in A$ is an element satisfying $e^2=e$. If $e,f\in A$ are idempotents, we write $e\leq f$ if $ef=fe=e$. Equivalently, $f=e+e'$ where $e'\in A$ is an idempotent \textbf{orthogonal} to $e$ (i.e. $ee'=e'e=0$). We call $e$ a \textbf{sub-idempotent} of $f$ if $e\leq f$. We say that a nonzero idempotent $e$ is \textbf{primitive} if the only idempotent $f$ satisfying $f\leq e$ is $f=0$ and $f=e$.

In this section, we review some well-known facts about associative algebras. Since, unlike many references, our algebras are not assumed to be unital, we include proofs for the reader's convenience.

\begin{df}
Let $u,v\in A$. We say that $(u,v)$ is a pair of \textbf{partial isometries in $A$} if the following are true:
\begin{enumerate}[label=(\alph*)]
\item $p:=vu$ and $q:=uv$ are idempotents.
\item $u\in qAp$ and $v\in pAq$.
\end{enumerate}
In this case, we also say that $u$ is a partial isometry from $p$ to $q$, and that $v$ is a partial isometry from $q$ to $p$. We say that two idempotents are \textbf{equivalent} if there are partial isometries between them.
\end{df}

\begin{pp}\label{lb6}
Let $e,f\in A$ be idempotents. Then an element of $\Hom_{A,-}(Ae,Af)$ is precisely the right multiplication of an element of $eAf$. In particular, we have an algebra isomorphism
\begin{align*}
\End_{A,-}(Ae)^\opp\simeq eAe
\end{align*}
\end{pp}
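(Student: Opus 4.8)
The plan is to prove the two assertions simultaneously by exhibiting an explicit map in each direction and checking that they are mutually inverse, with the second claim being a special case of the first. First I would show that for any $\varphi\in eAf$, the right multiplication operator $R_\varphi\colon Ae\to Af$, $x\mapsto x\varphi$, is well-defined and lies in $\Hom_{A,-}(Ae,Af)$. The key point is that elements of $Ae$ can be written as $ae$ with $a\in A$, and one checks $R_\varphi(ae)=ae\varphi=a(e\varphi f)=ae\varphi f\in Af$ since $\varphi=e\varphi f$; left $A$-linearity is immediate from associativity. This gives a linear map $eAf\to\Hom_{A,-}(Ae,Af)$, $\varphi\mapsto R_\varphi$.

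Next I would construct the inverse. Given $T\in\Hom_{A,-}(Ae,Af)$, I set $\varphi:=T(e)$. Since $e\in Ae$ we have $\varphi\in Af$, and applying left $A$-linearity with the element $e$ acting on $e\in Ae$ gives $\varphi=T(e)=T(e\cdot e)=eT(e)=e\varphi$, so $\varphi\in eAf$. Then for arbitrary $ae\in Ae$, left $A$-linearity gives $T(ae)=T(ae\cdot e)=ae\,T(e)=ae\varphi=R_\varphi(ae)$, hence $T=R_\varphi$. This shows $\varphi\mapsto R_\varphi$ is surjective; injectivity is clear since $R_\varphi(e)=e\varphi=\varphi$ recovers $\varphi$ (using $\varphi\in eAf$). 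So $eAf\to\Hom_{A,-}(Ae,Af)$ is a linear bijection.

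Finally, for the second (``in particular'') statement, I take $f=e$ and track the algebra structures. The bijection $eAe\to\End_{A,-}(Ae)$ sends $\varphi$ to right multiplication $R_\varphi$. For $\varphi,\psi\in eAe$, the composite $R_\psi\circ R_\varphi$ sends $x\mapsto (x\varphi)\psi=x(\varphi\psi)=R_{\varphi\psi}(x)$, so composition of operators corresponds to the product $\varphi\psi$ taken in the \emph{opposite} order relative to $\psi\circ\varphi$; concretely $R_\varphi\circ R_\psi=R_{\psi\varphi}$. Hence $\varphi\mapsto R_\varphi$ is an anti-isomorphism $eAe\to\End_{A,-}(Ae)$, equivalently an isomorphism $\End_{A,-}(Ae)^{\opp}\simeq eAe$, as claimed.

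There is no serious obstacle here: the proposition is a standard Yoneda-type computation, and the only mild subtlety — the one place non-unitality could bite — is ensuring that $T(e)$ genuinely lands in $eAf$ rather than merely in $Af$, which is handled by the idempotent identity $e\cdot e=e$ together with left $A$-linearity. The bookkeeping for the opposite algebra in the last step is the other spot to state carefully, but it is purely formal.
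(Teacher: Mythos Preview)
Your proof is correct and follows essentially the same approach as the paper: both construct the inverse by sending $T$ to $T(e)$, verify $T(e)\in eAf$ via $eT(e)=T(e\cdot e)=T(e)$, and recover $T$ as right multiplication by $T(e)$ using left $A$-linearity. Your additional explicit verification of the anti-multiplicativity for the ``in particular'' clause is a nice touch the paper leaves implicit.
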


%From the following proof, it is clear that this proposition is true even without assuming that $A$ is almost unital.

\begin{proof}
Clearly the right multiplication by some element of $eAf$ yields an element of $\Hom_{A,-}(Ae,Af)$. Conversely, suppose that $T\in \Hom_{A,-}(Ae,Af)$. Let $x=T(e)$, which belongs to $Af$. Since $ex=eT(e)=T(ee)=T(e)=x$, we see that $x\in eAf$. For each $y\in A$, we have $T(ye)=yT(e)=yx=yex$, which shows that $T$ is the right multiplication by $x$.
\end{proof}

\begin{co}\label{lb13}
Let $e,f$ be idempotents in $A$. The following are equivalent: 
\begin{enumerate}[label=(\arabic*)]
\item $Ae\simeq Af$ as left $A$-modules.
\item There is a partial isometry from $e$ to $f$.
\end{enumerate}
\end{co}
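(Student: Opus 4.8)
The strategy is to reduce both implications to Proposition \ref{lb6}, which identifies $\Hom_{A,-}(Ae,Af)$ with $eAf$ via right multiplication. Writing $R_w$ for right multiplication by an element $w$, the key structural point is that $R_w$ depends covariantly on $w$ under composition: if $w\in eAf$ and $w'\in fAg$ then $R_{w'}\circ R_w=R_{ww'}$ as maps $Ae\to Ag$, and $R_e$ acts as the identity on $Ae$. Once this is noted, the corollary is essentially a formal manipulation.

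For $(2)\Rightarrow(1)$: unwinding the definition of a pair of partial isometries, a partial isometry from $e$ to $f$ amounts to a pair $u\in fAe$ and $v\in eAf$ with $vu=e$ and $uv=f$. I would then take $R_v : Ae\to Af$ and $R_u : Af\to Ae$ to be the corresponding $A$-module maps furnished by Proposition \ref{lb6}, and observe that $R_u\circ R_v$ is right multiplication by $vu=e$, hence the identity on $Ae$, while $R_v\circ R_u$ is right multiplication by $uv=f$, hence the identity on $Af$. So $Ae\simeq Af$.

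For $(1)\Rightarrow(2)$: given mutually inverse $A$-module isomorphisms $T : Ae\to Af$ and $S : Af\to Ae$, Proposition \ref{lb6} produces $v\in eAf$ with $T=R_v$ and $u\in fAe$ with $S=R_u$. Evaluating $S\circ T=\id_{Ae}$ at $e\in Ae$ gives $vu=e$ (since $T(e)=ev=v$ and $S(v)=vu$), and evaluating $T\circ S=\id_{Af}$ at $f\in Af$ gives $uv=f$. Since $e,f$ are idempotent, $p:=vu=e$ and $q:=uv=f$ are idempotents with $u\in fAe=qAp$ and $v\in eAf=pAq$, so $(u,v)$ is a pair of partial isometries and $u$ is a partial isometry from $e$ to $f$.

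The argument is short, and there is no genuine obstacle; the only thing to watch is the bookkeeping forced by the non-unital setting — one repeatedly uses relations such as $ev=v$ and $vf=v$ for $v\in eAf$ (and similarly for $u$) to see that the right multiplications have the claimed domains and ranges and that composites are computed by multiplying the underlying elements in the correct order. This last subtlety, namely that composing right multiplications corresponds to $v\mapsto vu$ rather than $v\mapsto uv$, is exactly the reason the opposite algebra appears in Proposition \ref{lb6}, and keeping it straight is the one place where care is required.
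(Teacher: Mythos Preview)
Your proof is correct and follows essentially the same approach as the paper: both directions reduce to Proposition~\ref{lb6}, translating between $A$-module isomorphisms $Ae\leftrightarrow Af$ and pairs of elements in $eAf$, $fAe$ whose products recover $e$ and $f$. Your bookkeeping of $u,v$ matches the definition of partial isometry slightly more carefully than the paper's (which swaps the roles), but the argument is the same.
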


\begin{proof}
(1)$\Rightarrow$(2): Let $T\in\Hom_{A,-}(Ae,Af)$ be an isomorphism with inverse $T^{-1}\in\Hom_{A,-}(Af,Ae)$. By Prop. \ref{lb6}, $T$ and $T^{-1}$ are realized by the right multiplications of $u\in eAf$ and $v\in fAe$ respectively. Since $TT^{-1}=1_{Af}$, we have $vu=f$. Since $T^{-1}T=1_{Ae}$, we have $uv=e$.

(2)$\Rightarrow$(1): Let $u\in eAf$ and $v\in fAe$ such that $uv=e,vu=f$. Then the right multiplication of $u$ on $Ae$ has inverse being the right multiplication of $v$. So $Ae\simeq Af$.
\end{proof}

\begin{co}\label{lb3}
Let $e\in A$ be an idempotent. Let $M$ be a left $A$-submodule of $Ae$. The following are equivalent.
\begin{enumerate}
\item[(1)] $M$ is a direct summand of $Ae$.
\item[(2)]  $M=Af$ for some idempotent $f\leq e$ in $A$.
\end{enumerate}
\end{co}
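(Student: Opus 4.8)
The plan is to prove the two implications separately, with the substantive content lying in (1)$\Rightarrow$(2).

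For (2)$\Rightarrow$(1): Suppose $M = Af$ for an idempotent $f \leq e$. By definition of $\leq$, we have $f = e + e'$ where $e'$ is an idempotent orthogonal to $f$ — wait, more precisely $ef = fe = f$, so write $e' = e - f$ and check directly that $e'^2 = e^2 - ef - fe + f^2 = e - f - f + f = e - f = e'$ and $fe' = fe - f = f - f = 0$, $e'f = ef - f = 0$. Then $Ae = Af \oplus Ae'$: indeed $Af + Ae' \ni af + ae' = a(f + e') = ae$ shows the sum is all of $Ae$, and if $af = be'$ then multiplying on the right by $f$ gives $af = af \cdot f = be' f = 0$, so the sum is direct. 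Hence $M = Af$ is a direct summand.

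For (1)$\Rightarrow$(2): Suppose $Ae = M \oplus N$ as left $A$-modules. First I would write the identity map of $Ae$ (viewed as right multiplication by $e$, since for $ae \in Ae$ one has $ae \cdot e = ae$) as a sum $\id_{Ae} = \pi_M + \pi_N$ of the two projections, which are elements of $\End_{A,-}(Ae)$. By Prop.~\ref{lb6}, $\End_{A,-}(Ae)^\opp \simeq eAe$, so $\pi_M$ is realized as right multiplication by some $f \in eAe$ and $\pi_N$ as right multiplication by some $g \in eAe$, with $f + g = e$ (evaluating $\id_{Ae}$ at $e$). Since $\pi_M$ is idempotent as an endomorphism, and right multiplication is an anti-homomorphism $\End_{A,-}(Ae)^\opp \to eAe$, the element $f$ is an idempotent in $eAe$, hence in $A$; and $f = ef = fe$ gives $f \leq e$. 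Finally I must identify $M$ with $Af$: since $\pi_M$ is the projection onto $M$, we have $M = \pi_M(Ae) = Ae \cdot f = Af$ (using $ef = f$ to see $Aef = Af$, and that every element of $Af$ is $af = af\cdot f \in Aef \subseteq$ image). This gives $M = Af$ with $f \leq e$.

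The main obstacle, such as it is, is bookkeeping around the non-unital setting: one cannot simply invoke ``$M$ is a direct summand iff it is the image of an idempotent endomorphism'' as a black box, because $Ae$ is not a unital ring and the endomorphism ring must be handled via Prop.~\ref{lb6}; in particular one must be careful that right multiplication by $e$ acts as the identity on $Ae$ (true since $Ae \cdot e = Ae$) so that $e$ plays the role of the identity in $eAe = \End_{A,-}(Ae)^\opp$, and that the correspondence between idempotent endomorphisms, idempotents in $eAe$, and direct summands goes through cleanly. None of this is deep, but it is the only place where the lack of a global unit needs genuine attention.
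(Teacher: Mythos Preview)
Your proof is correct and follows essentially the same approach as the paper's: for (1)$\Rightarrow$(2), identify the projection onto $M$ with right multiplication by an element $f\in eAe$ via Prop.~\ref{lb6}, observe that $f$ is idempotent with $f\leq e$, and conclude $M=Af$; for (2)$\Rightarrow$(1), decompose $Ae=Af\oplus A(e-f)$. Your write-up is more explicit (you verify orthogonality, directness of the sum, and $f+g=e$ by hand), but there is no genuine difference in strategy.
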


\begin{proof}
(2)$\Rightarrow$(1): $Ae=Af\oplus Af'$ where $f'=e-f$ is an idempotent.

(1)$\Rightarrow$(2): Let $Ae=M\oplus N$. Let $\varphi:Ae\rightarrow Ae$ be the projection on $M$ vanishing on $N$. Then $\varphi\in\End(Ae)$. By Prop. \ref{lb6}, $\varphi$ is the right multiplication by some $f\in eAe$. Since $\varphi\circ\varphi=\varphi$, clearly $f^2=f$. Moreover, $M=\varphi(Ae)=(Ae)f=Af$. 
\end{proof}

\begin{co}\label{lb5}
Let $e\in A$ be an idempotent. The following are equivalent.
\begin{enumerate}
\item[(1)] $Ae$ is an indecomposable left $A$-module.
\item[(2)] $e$ is primitive. 
\end{enumerate}
\end{co}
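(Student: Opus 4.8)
The plan is to deduce both implications directly from Corollary \ref{lb3}, which already identifies the direct summands of $Ae$ as exactly the submodules of the form $Af$ with $f$ an idempotent satisfying $f\leq e$.

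First I would record the trivial boundary remark that an indecomposable module is by convention nonzero, and that $e=0$ is not primitive, so both (1) and (2) force $e\neq 0$ and $Ae\neq 0$ (using $e=e^2\in Ae$). For the implication (1)$\Rightarrow$(2), let $f\in A$ be an idempotent with $f\leq e$ and set $f'=e-f$. A one-line computation using $ef=fe=f$ shows $f'$ is an idempotent orthogonal to $f$ with $f+f'=e$, whence $Ae=Af\oplus Af'$ (this is exactly the content of the (2)$\Rightarrow$(1) half of Corollary \ref{lb3}). Indecomposability of $Ae$ then forces $Af=0$ or $Af'=0$; since $f=f^2\in Af$ and $f'=(f')^2\in Af'$, this means $f=0$ or $f'=0$, i.e. $f=0$ or $f=e$. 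Hence $e$ is primitive.

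Conversely, for (2)$\Rightarrow$(1), assume $e$ is primitive and suppose $Ae=M\oplus N$ as left $A$-modules. Then $M$ is a direct summand of $Ae$, so by the (1)$\Rightarrow$(2) half of Corollary \ref{lb3} there is an idempotent $f\leq e$ with $M=Af$, and correspondingly $N=A(e-f)$. Primitivity of $e$ gives $f=0$, so $M=0$, or $f=e$, so $M=Ae$ and $N=0$. Thus $Ae$ admits no decomposition into two nonzero submodules, i.e. it is indecomposable.

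I do not expect any genuine obstacle: the result is a formal consequence of the submodule/idempotent dictionary of Corollary \ref{lb3}. The only points needing mild care are the bookkeeping about the zero idempotent versus the zero module at the extremes, and the elementary verification that $f\leq e$ produces the complementary orthogonal idempotent $e-f$ (which can either be cited from Corollary \ref{lb3} or reproved in a line).
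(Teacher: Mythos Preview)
Your proposal is correct and matches the paper's approach: the paper simply states that the corollary follows immediately from Cor.~\ref{lb3}, and your argument is exactly the spelled-out version of that deduction. The only difference is level of detail; nothing further is needed.
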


\begin{proof}
This follows immediately from Cor. \ref{lb3}.
\end{proof}

\begin{lm}\label{lb7}
Let $M$ be a nonzero finitely-generated left $A$-module. Then $M$ has a maximal proper left $A$-submodule $N$. Consequently, there is an epimorphism of $M$ onto an irreducible module.
\end{lm}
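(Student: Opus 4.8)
The plan is the classical Zorn's lemma argument, adapted to the non-unital setting. Let $\mathcal P$ be the set of \emph{proper} left $A$-submodules of $M$, partially ordered by inclusion. It is nonempty, since $\{0\}\in\mathcal P$ (here we use $M\neq 0$). Given a chain $\mathcal C\subseteq\mathcal P$, put $N=\bigcup_{P\in\mathcal C}P$; since $\mathcal C$ is totally ordered, $N$ is again a left $A$-submodule of $M$. I claim $N$ is proper. Indeed, fix generators $m_1,\dots,m_k$ of $M$. Each $m_i$ lies in some member of $\mathcal C$, and since $\mathcal C$ is a chain there is a single $P\in\mathcal C$ containing all of $m_1,\dots,m_k$; but a left $A$-submodule containing a generating set of $M$ must equal $M$ (the submodule generated by a subset is by definition the smallest submodule containing it), contradicting $P\in\mathcal P$. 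Hence $N\in\mathcal P$ is an upper bound for $\mathcal C$, and Zorn's lemma produces a maximal element $N\in\mathcal P$, which is precisely a maximal proper left $A$-submodule of $M$.

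For the consequence, pass to the quotient $M/N$, which is nonzero since $N$ is proper. By the correspondence between left $A$-submodules of $M/N$ and left $A$-submodules of $M$ containing $N$, maximality of $N$ forces $M/N$ to have no nonzero proper submodule, i.e. $M/N$ is irreducible; the quotient map $M\twoheadrightarrow M/N$ is the desired epimorphism. (If one's convention additionally demands $A\cdot(M/N)\neq 0$ for irreducibility, one would further note that $AM\not\subseteq N$; but with the bare simplicity convention this is unnecessary.)

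I do not expect a genuine obstacle here: this is the standard argument, and the one step where care is needed is showing that the union of a chain of proper submodules is again proper. That step is exactly where finite generation of $M$ is used \emph{crucially}, and the statement is false without it (e.g. $\Qbb$ as a $\Zbb$-module has no maximal submodule). No other feature of $A$ enters; in particular, neither unitality nor the AUF hypothesis plays any role in this lemma.
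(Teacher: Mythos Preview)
Your proof is correct and is the standard Zorn's lemma argument; the paper's proof is essentially the same. The only cosmetic difference is that the paper singles out one generator $\xi_1$ not lying in the submodule $N_0$ generated by the remaining generators, and applies Zorn to the poset of submodules containing $N_0$ but avoiding $\xi_1$---a minor repackaging of the same idea, with finite generation used in the same essential way.
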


%Note that $N$ is necessarily in $\QC(A)$, but not necessarily in $\Coh(A)$.

\begin{proof}
Let $\xi_1,\dots,\xi_n$ generate $M$. Without loss of generality, we assume that $\xi_1$ does not belong to the submodule $N_0$ generated by $\xi_2,\dots,\xi_n$. By Zorn's lemma, there is a left submodule $N\leq M$ maximal with respect to the property that $N_0\subset N$ and $\xi_1\notin N$. Let us prove that $N$ is a maximal proper submodule. Let $N<K\leq M$. Then by the maximality of $N$ we must have $\xi_1\in K$. So $\xi_1,\dots,\xi_n\in K$, and hence $K=M$. So $K$ is not proper.
\end{proof}

\section{Almost unital algebras}

In this section, we introduce the notion of almost unital algebras, which is weaker than being almost unital and finite-dimensional.

\begin{df}
We say that an algebra $A$ is \textbf{almost unital} if the following conditions are satisfied:
\begin{itemize}
\item[(a)] For each $x\in A$, there is an idempotent $e\in A$ such that $x=exe$.
\item[(b)] For any finitely many idempotents $e_1,\dots,e_n\in A$ there exists an idempotent $e\in A$ such that $e_i\leq e$ for all $1\leq i\leq n$.
\end{itemize}
\end{df}

Throughout this section, unless otherwise stated, $A$ is assumed to be almost unital.

\begin{df}\label{lb18}
We say that a left $A$-module $M$ is \textbf{quasicoherent} if one of the following equivalent conditions holds:
\begin{enumerate}[label=(\arabic*)]
\item For each $\xi\in M$ we have $\xi\in A\xi$.
\item For each $\xi\in M$ there exists an idempotent $e\in A$ such that $\xi=e\xi$.
\item $M$ is a quotient module of $\bigoplus_{i\in I}Ae_i$ where each $e_i\in A$ is an idempotent.
\item $M$ is a quotient module of a free left $A$-module $A^{\oplus I}$.
\end{enumerate}
The category of quasicoherent left $A$-modules is denoted by \pmb{$\QC(A)$}.
\end{df}

\begin{proof}[Proof of equivalence]
(1)$\Rightarrow$(2): For each $\xi\in M$, since $\xi\in A\xi$, we have $\xi=a\xi$ for some $a\in A$. Choose idempotent $e\in A$ such that $a\in eAe$. Then $e\xi=ea\xi=a\xi=\xi$.

(2)$\Rightarrow$(1): Obvious.

(2)$\Rightarrow$(3): For each $\xi\in M$, let $e_\xi\in A$ be an idempotent such that $e_\xi\xi=\xi$. Then we have a morphism $\bigoplus_{\xi\in M}Ae_\xi\rightarrow M$ whose restriction to $Ae_\xi$ sends each $a\in Ae_\xi$ to $a\xi$. Then $\xi=e_\xi\xi$ implies that $\xi\in Ae_\xi\xi$, and hence $\xi$ is in the range of this morphism. So this morphism is surjective.

(3)$\Rightarrow$(4): This is obvious, since we have an epimorphism $A\rightarrow Ae_i$ and hence an epimorphism $\bigoplus_{i\in I}A\rightarrow \bigoplus_{i\in I}Ae_i$.

(4)$\Rightarrow$(2): It suffices to show that $A^{\oplus I}$ satisfies the requirement of (2). Choose $\xi=(a_i)_{i\in I}\in A^{\oplus I}$. Then there are only finitely many $i\in I$ such that $a_i\ne 0$. Since $A$ is almost unital, there exist idempotents $e_i\in A$ (where $i\in I$) such that $a_i=e_i a_i e_i$ for all $i\in I$. (If $a_i=0$, then we choose $e_i=0$). Choose an idempotent $e\in A$ such that $e_i\leq e$ for all $i\in I$. Then $\xi=e\xi$.
\end{proof}

\begin{df}\label{lb26}
A left $A$-module $M$ is called \textbf{coherent} if it is quasicoherent and finitely-generated. By the above proof of equivalence, it is clear that $M$ is coherent iff $M$ is a quotient of $\bigoplus_{i\in I}Ae_i$ where $I$ is a \textit{finite} index set and $e_i\in A$ is an idempotent. The category of coherent left $A$-modules is denoted by \pmb{$\Coh(A)$}.
\end{df}

However, note that a coherent left $A$-module is not necessarily a quotient of $A^{\oplus n}$ where $n\in\Zbb_+$. Indeed, $A$ is not necessarily finitely generated as a left $A$-module.

\begin{rem}\label{lb55}
If $M\in\QC(A)$, then every submodule of $M$ is quasicoherent, and every quotient module of $M$ is quasicoherent. However, if $M\in \Coh(A)$, then a submodule of $M$ is not known to be coherent. Thus, $\QC(A)$ is an abelian category, while $\Coh(A)$ is not known to be abelian.
\end{rem}

\begin{pp}\label{lb1}
Let $M\in\QC(A)$. The following are equivalent.
\begin{enumerate}[label=(\arabic*)]
\item $M$ is projective in the category of left $A$-modules.
\item $M$ is projective in $\QC(A)$.
\item $M$ is a direct summand of $\bigoplus_{i\in I}Ae_i$ for some index set $I$ and each $e_i\in A$ is an idempotent. 
\end{enumerate}
\end{pp}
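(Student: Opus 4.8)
\textit{Proof proposal.} The plan is to run the cycle of implications $(1)\Rightarrow(2)\Rightarrow(3)\Rightarrow(1)$. The implication $(1)\Rightarrow(2)$ I would treat as essentially formal: by Rem. \ref{lb55} the category $\QC(A)$ is abelian, and since it is a full subcategory of the category of all left $A$-modules closed under passing to quotients, a morphism is an epimorphism in $\QC(A)$ exactly when it is surjective as a map of $A$-modules. Hence any lifting problem in $\QC(A)$ is literally a lifting problem in the ambient module category; if $M$ is projective there it gets a lift, and that lift — being a map out of $M$ into an object of $\QC(A)$ — is automatically a morphism of $\QC(A)$. So $M$ is projective in $\QC(A)$.

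For $(2)\Rightarrow(3)$ I would use that $M\in\QC(A)$ gives, by Def. \ref{lb18}(3), an idempotent family $(e_i)_{i\in I}$ in $A$ and an epimorphism $\pi\colon P:=\bigoplus_{i\in I}Ae_i\twoheadrightarrow M$, with $P$ itself lying in $\QC(A)$. Projectivity of $M$ in $\QC(A)$ then lifts $\id_M$ through $\pi$ to a section $s\colon M\to P$ with $\pi\circ s=\id_M$, exhibiting $M$ as a direct summand of $P$.

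For $(3)\Rightarrow(1)$ it suffices to show that $Ae$ is projective in the category of all left $A$-modules for each idempotent $e\in A$: then $P=\bigoplus_i Ae_i$ is projective (an arbitrary coproduct of projective modules is projective, as $\Hom(\bigoplus_i Ae_i,-)=\prod_i\Hom(Ae_i,-)$ is exact), and a direct summand of a projective module is projective. To see $Ae$ is projective, given a surjection $f\colon X\twoheadrightarrow Y$ of left $A$-modules and an $A$-module map $g\colon Ae\to Y$, I would pick $x\in X$ with $f(x)=g(e)$ (using $e\in Ae$ and surjectivity of $f$) and set
\begin{align*}
\widetilde g\colon Ae\longrightarrow X,\qquad \xi\longmapsto \xi x,
\end{align*}
which is $A$-linear; since $\xi=\xi e$ for every $\xi\in Ae$, one gets $f(\widetilde g(\xi))=\xi f(x)=\xi g(e)=g(\xi e)=g(\xi)$, so $\widetilde g$ lifts $g$.

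The only place where non-unitality genuinely bites is the projectivity of $Ae$: when $A$ is unital, $Ae$ is simply a direct summand of the free module $A$, but here $A$ need not be finitely generated — or otherwise tractable — as a module over itself, so one cannot route the argument through $A$. The fix is exactly the computation above, building the lift directly from a preimage $x$ of $g(e)$ and exploiting the idempotent identity $\xi=\xi e$ on $Ae$; this short step is the real content. The remaining care is bookkeeping — using that $\QC(A)$ is closed under arbitrary coproducts and under quotients, which is what keeps $P$ and all the constructed maps inside $\QC(A)$ and thereby legitimizes the comparison of the two projectivity notions in $(1)\Rightarrow(2)$.
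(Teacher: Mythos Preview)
Your proof is correct and follows essentially the same route as the paper. The only cosmetic difference is that the paper proves projectivity of the whole direct sum $\bigoplus_{i\in I}Ae_i$ in one go (choosing preimages $\xi_i$ of $\Phi(e_i)$ and defining the lift by $ae_i\mapsto ae_i\xi_i$), whereas you first establish projectivity of each $Ae$ and then invoke closure of projectives under coproducts and retracts; the underlying computation is identical.
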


\begin{proof}
(3)$\Rightarrow$(1): It is well-known that a direct summand of a projective module is projective. Thus, it suffices to prove that $\bigoplus_{i\in I}Ae_i$ is projective.  Let $\Phi:\bigoplus_{i\in I}Ae_i\rightarrow N$ be a morphism where $N$ is a left $A$-module.  Let $\Gamma:K\rightarrow N$ be an epimorphism. Let
\begin{align*}
\eta_i=\Phi(e_i)
\end{align*}
Since $\Gamma$ is surjective, there is $\xi_i\in K$ such that $\Gamma(\xi_i)=\eta_i$. Define $\Psi:\bigoplus_{i\in I}Ae_i\rightarrow K$ to be the morphism sending each $ae_i\in Ae_i$ to $ae_i\xi_i$. Then the following commute:
\begin{equation*}
\begin{tikzcd}
            & \bigoplus_{i\in I}Ae_i \arrow[d,"\Phi"] \arrow[ld,"\Psi"'] \\
K \arrow[r,"\Gamma"'] & N                     
\end{tikzcd}
\qquad
\begin{tikzcd}
                     & ae_i \arrow[d, maps to] \arrow[ld, maps to] \\
ae_i\xi_i \arrow[r, maps to] &  ae_i\eta_i                                       
\end{tikzcd}
\end{equation*}
Note that $\mapsto$ holds since $\Gamma(ae_i\xi_i)=ae_i\Gamma(\xi_i)=ae_i\eta_i$, and \rotatebox[origin=c]{270}{\hbox{$\mapsto$}} holds since $\Phi(ae_i)=\Phi(ae_ie_i)=ae_i\Phi(e_i)=ae_i\eta_i$.

(1)$\Rightarrow$(2): Obvious.

(2)$\Rightarrow$(3): Choose an epimorphism $\bigoplus_{i\in I}Ae_i\rightarrow M$, which splits because $M$ is projective. So $M$ is a direct summand of $\bigoplus_{i\in I}Ae_i$.
\end{proof}

\begin{pp}\label{lb2}
Let $M\in\Coh(A)$. The following are equivalent.
\begin{enumerate}[label=(\arabic*)]
\item $M$ is projective in the category of left $A$-modules.
\item $M$ is projective in $\QC(A)$.
\item $M$ is projective in $\Coh(A)$.
\item $M$ is a direct summand of $\bigoplus_{i\in I}Ae_i$ for some \textit{finite} index set $I$ and each $e_i\in A$ is an idempotent.
\end{enumerate} 
\end{pp}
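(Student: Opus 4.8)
\textbf{Proof proposal for Proposition \ref{lb2}.}

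The plan is to deduce this from Proposition \ref{lb1} together with the hypothesis that $M$ is \emph{finitely generated}, the only new ingredient being the control on the cardinality of the index set. The implications $(4)\Rightarrow(1)\Rightarrow(2)$ and $(1)\Rightarrow(3)$ are immediate or follow from general nonsense: $(4)\Rightarrow(1)$ is a special case of $(3)\Rightarrow(1)$ in Prop. \ref{lb1}, $(1)\Rightarrow(2)$ is obvious since $\QC(A)$ is a full subcategory closed under the relevant constructions, and for $(1)\Rightarrow(3)$ one notes that an epimorphism in $\Coh(A)$ onto $M$ is in particular an epimorphism of left $A$-modules, so it splits by projectivity in the ambient category, whence $M$ is projective in $\Coh(A)$ as well. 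Likewise $(2)\Rightarrow(3)$ follows once we observe that any epimorphism $P\twoheadrightarrow M$ with $P\in\Coh(A)$ lives in $\QC(A)$. So the real content is an implication of the form ``projective (in any of the senses) $+$ coherent $\Rightarrow$ direct summand of a \emph{finite} sum $\bigoplus_{i\in I}Ae_i$''.

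The key step, which I expect to be the main obstacle, is this: given that $M$ is finitely generated and is a direct summand of some $\bigoplus_{i\in I}Ae_i$ (by Prop. \ref{lb1}, applied after forgetting finite generation), replace the possibly-infinite index set by a finite one. Here is how I would do it. Let $\xi_1,\dots,\xi_n$ generate $M$, and use quasicoherence to pick an idempotent $e\in A$ with $e\xi_k=\xi_k$ for all $k$ (combining Def. \ref{lb18}(2) with the second almost-unital axiom to find a single $e$ dominating idempotents for each generator). Then $M=eM=AeM\subseteq Ae$ is not quite right---rather, the surjection $A^{\oplus n}\twoheadrightarrow M$, $(a_k)\mapsto\sum_k a_k\xi_k$, factors through $(Ae)^{\oplus n}\twoheadrightarrow M$ since each $\xi_k=e\xi_k$. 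Thus $M$ is a quotient of $(Ae)^{\oplus n}$, a coherent module of the form $\bigoplus_{i\in I}Ae_i$ with $I$ finite and all $e_i=e$. Now $M$, being projective in the category of left $A$-modules, splits this surjection, so $M$ is a direct summand of $(Ae)^{\oplus n}$. This gives $(1)\Rightarrow(4)$, and since $(2)$ and $(3)$ each imply $(1)$ via Prop. \ref{lb1} (projectivity in $\QC(A)$ implies projectivity among all left modules by that proposition; projectivity in $\Coh(A)$ reduces to the same by lifting against the epimorphism $(Ae)^{\oplus n}\twoheadrightarrow M$ inside $\Coh(A)$, which shows $M$ is a direct summand of $(Ae)^{\oplus n}$ directly), the cycle of implications closes.

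To assemble the argument cleanly I would prove the implications in the order $(4)\Rightarrow(1)$ (cite Prop. \ref{lb1}), $(1)\Rightarrow(4)$ (the finite-generation reduction above), then $(1)\Rightarrow(2)$ and $(1)\Rightarrow(3)$ (formal), and finally $(2)\Rightarrow(1)$ and $(3)\Rightarrow(1)$. For $(3)\Rightarrow(1)$: build the epimorphism $\pi\colon(Ae)^{\oplus n}\twoheadrightarrow M$ as above; this is a morphism in $\Coh(A)$, so projectivity of $M$ in $\Coh(A)$ lets us lift $\id_M$ through $\pi$, exhibiting $M$ as a direct summand of $(Ae)^{\oplus n}$, which is projective among all left $A$-modules by Prop. \ref{lb1}; hence so is $M$. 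For $(2)\Rightarrow(1)$: the same $\pi$ is a morphism in $\QC(A)$ (its source and target are quasicoherent), so projectivity in $\QC(A)$ splits it, again realizing $M$ as a direct summand of a module projective in the full module category. The one point demanding a little care---and the place where the almost-unital hypothesis is genuinely used---is producing the single idempotent $e$ with $e\xi_k=\xi_k$ for all generators simultaneously; everything else is bookkeeping transported from Prop. \ref{lb1}.
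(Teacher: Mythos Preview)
Your proposal is correct and follows essentially the same approach as the paper: both arguments reduce to the observation that a coherent module admits an epimorphism from a \emph{finite} direct sum $\bigoplus_{i\in I}Ae_i$ (which the paper simply cites from Def.~\ref{lb26}, while you reconstruct it explicitly via the almost-unital axiom), and then split this epimorphism using projectivity. The paper's presentation is more economical, closing the cycle $(1)\Rightarrow(2)\Rightarrow(3)\Rightarrow(4)\Rightarrow(1)$ directly rather than proving all pairwise implications, but the content is the same.
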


Therefore, there is no ambiguity when talking about projective coherent left $A$-modules.

\begin{proof}
Clearly we have (1)$\Rightarrow$(2) and (2)$\Rightarrow$(3). By Prop. \ref{lb1} we have (4)$\Rightarrow$(1). Assume (3). By Def. \ref{lb26}, there is an epimorphism $\bigoplus_{i\in I}Ae_i\rightarrow M$ such that $I$ is finite, and that it splits (because $M$ is projective in $\Coh(A)$). So (4) is true.
\end{proof}

\begin{rem}\label{lb10}
If $M\in\QC(A)$, clearly $M$ is irreducible in $\QC(A)$ iff $M$ is irreducible in the category of left $A$-modules; in this case we say that $M$ is \textbf{irreducible}. Note that even if $M\in\Coh(A)$, its irreducibility is understood as in $\QC(A)$ but not as in $\Coh(A)$.
\end{rem}

\begin{pp}\label{lb19}
Let $M$ be a left $A$-module. The following are equivalent.
\begin{enumerate}[label=(\arabic*)]
\item $M\in\QC(A)$ and $M$ is irreducible.
\item $M\simeq Ae/N$ where $e\in A$ is an idempotent and $N$ is a maximal proper left $A$-submodule of $Ae$.
\item $M\simeq A/N$ where $N$ is a maximal (proper) left ideal of $A$.
\end{enumerate}
\end{pp}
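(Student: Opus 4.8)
The plan is to prove the cyclic chain of implications $(1)\Rightarrow(2)\Rightarrow(3)\Rightarrow(1)$; this is more economical than proving $(1)\Leftrightarrow(2)$ and $(1)\Leftrightarrow(3)$ separately, since it lets me deal with the modules $Ae$ and $A$ each exactly once. For $(1)\Rightarrow(2)$, assume $M\in\QC(A)$ is irreducible and pick any nonzero $\xi\in M$. By Def.~\ref{lb18}(2) there is an idempotent $e\in A$ with $e\xi=\xi$. I then form the left $A$-module homomorphism $\pi\colon Ae\to M$, $b\mapsto b\xi$. Since $e=e^2\in Ae$ maps to $e\xi=\xi\neq 0$, the image of $\pi$ is a nonzero submodule of $M$, hence all of $M$ by irreducibility. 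Setting $N:=\Ker\pi$, a proper left $A$-submodule of $Ae$, we get $Ae/N\simeq M$, which is irreducible; therefore $N$ is maximal, which is $(2)$.

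For $(2)\Rightarrow(3)$, observe that $\rho\colon A\to Ae$, $a\mapsto ae$, is a surjective homomorphism of left $A$-modules, so composing with the quotient $Ae\to Ae/N\simeq M$ gives a surjection $A\to M$ with kernel $N'=\{a\in A:ae\in N\}$. Then $A/N'\simeq M$ is irreducible, so $N'$ is a maximal proper left $A$-submodule of $A$, giving $(3)$. For $(3)\Rightarrow(1)$, I first record that $A$ is quasicoherent as a left module over itself: for $x\in A$, almost unitality gives an idempotent $e$ with $x=exe$, hence $ex=e(exe)=exe=x$ and $x\in Ax$, which is exactly condition Def.~\ref{lb18}(1). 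Thus $M\simeq A/N$ is a quotient of a quasicoherent module, so $M\in\QC(A)$ by Rem.~\ref{lb55}; and $M$ is irreducible because $N$ is a maximal proper submodule.

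There is no real obstacle here — the proof is a sequence of routine verifications. The only points that require a little attention are bookkeeping ones: one must note that $Ae$ and $A$ are themselves objects of $\QC(A)$, so that the quotients constructed along the way genuinely remain quasicoherent (for $Ae$ this also follows from Rem.~\ref{lb55}, as $Ae$ is a submodule of the quasicoherent module $A$); and in $(1)\Rightarrow(2)$ one needs the idempotent $e$ with $e\xi=\xi$ both to lie in $Ae$ and to act as the identity on $\xi$, which is precisely what quasicoherence of $M$ supplies. If one preferred a non-cyclic argument, the implications $(1)\Rightarrow(2)$, $(2)\Rightarrow(1)$, $(1)\Rightarrow(3)$, $(3)\Rightarrow(1)$ would each go through by the same two-line manipulations, at the cost of repeating the quotient-of-quasicoherent remark twice.
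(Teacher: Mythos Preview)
Your proof is correct and follows essentially the same route as the paper's. The only difference is organizational: the paper proves $(1)\Rightarrow(2)$ and $(1)\Rightarrow(3)$ in parallel (using Def.~\ref{lb18}(3) to get an epimorphism from $\bigoplus_i Ae_i$ and then restricting to a single summand) and then $(2),(3)\Rightarrow(1)$ together via Def.~\ref{lb18}, whereas you argue cyclically and, for $(1)\Rightarrow(2)$, pick a single nonzero vector and invoke Def.~\ref{lb18}(2) directly --- but the underlying content is identical.
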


\begin{proof}
(1)$\Rightarrow$(2): Let $M\in\QC(A)$ be irreducible. By Def. \ref{lb18}, $M$ has an epimorphism $\Phi$ from some $\bigoplus_i Ae_i$ where $e_i\in A$ is an idempotent. The restriction of $\Phi$ to some $Ae_i$ must be nonzero, and hence must be surjective (since $M$ is irreducible). It follows that $M$ has an epimorphism $\Psi$ from $Ae_i$. Then $N=\Ker\Psi$ is a maximal proper left $A$-submodule of $Ae_i$, and $M\simeq Ae_i/N$.

(1)$\Rightarrow$(3): In the above proof, $M$ also has an epimorphism from $\bigoplus_i A$ (since we have an epimorphism $A\rightarrow Ae_i$). Thus, replacing $Ae_i$ with $A$ in the above proof, we are done.

(2),(3)$\Rightarrow$(1): Clearly $M$ is irreducible. That $M\in\QC(A)$ follows from Def. \ref{lb18}.
\end{proof}

\section{Projective covers}

Let $A$ be an algebra, not necessarily almost unital. In this section, we recall some basic facts about projective covers. When $A$ is unital, these results can be found in \cite{AF-rings}, for example. In the non-unital case, one can reduce to the unital setting by considering the unitalization of $A$. For the reader’s convenience, we include complete proofs.

\subsection{Basic facts}

\begin{df}
Let $M$ be a left $A$-module. A left $A$-submodule $K\leq M$ is called \textbf{superfluous}, if for any left $A$-submodule $L\leq M$ satisfying $K+L=M$ we must have $L=M$.
\end{df}

\begin{rem}\label{lb21}
Obviously, we have an equivalent description of superfluous submodules: Let $\pi:M\rightarrow M/K$ be the quotient map. Then $K\leq M$ is superfluous iff for any morphism of left $A$-modules $\varphi:N\rightarrow M$ such that $\pi\circ\varphi:N\rightarrow M/K$ is surjective, it must be true that $\varphi$ is surjective.
\end{rem}

\begin{df}
Let $M$ be a left $A$-module. A \textbf{projective cover} of $M$ denotes a left $A$-module epimorphism $\varphi:P\twoheadrightarrow M$ where $P$ is a projective left $A$-module, and $\Ker\varphi$ is superfluous in $P$.
\end{df}

The following property says that among the projective modules that have epimorphisms to $M$, the projective cover is the smallest one in the sense of direct summand.

\begin{pp}\label{lb23}
Let $\varphi:P\rightarrow M$ be a projective cover of $M$. Let $\psi:Q\rightarrow M$ be an epimorphism where $Q$ is projective. Then there is a morphism $\alpha:Q\rightarrow P$ such that the following diagram commutes. 
\begin{equation}\label{eq2}
\begin{tikzcd}
                       & P \arrow[d,"\varphi", two heads] \\
Q \arrow[ru,"\alpha"] \arrow[r,"\psi"', two heads] & M          
\end{tikzcd}
\end{equation}
Moreover, for any such $\alpha$, there is a left $A$-submodule $P'\leq Q$ such that $Q=\ker\alpha\oplus P'$ and that $\alpha|_{P'}:P'\xrightarrow{\simeq} P$ is an isomorphism.
\end{pp}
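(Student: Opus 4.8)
\textbf{Proof plan for Proposition \ref{lb23}.}

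The plan is to construct $\alpha$ using projectivity of $Q$, and then use the superfluousness of $\ker\varphi$ together with a splitting argument to extract the copy of $P$ inside $Q$. First I would obtain $\alpha$: since $Q$ is projective and $\varphi:P\twoheadrightarrow M$ is an epimorphism, the identity-type lifting property applied to $\psi:Q\to M$ gives a morphism $\alpha:Q\to P$ with $\varphi\circ\alpha=\psi$, which is exactly the commuting triangle \eqref{eq2}. The first real point is to show that $\alpha$ is surjective. This is where Remark \ref{lb21} enters: composing $\alpha$ with the quotient map $\pi:P\to P/\ker\varphi$ yields $\pi\circ\alpha$, and since $\varphi$ factors as (iso $P/\ker\varphi\simeq M$) $\circ\,\pi$ and $\psi=\varphi\circ\alpha$ is surjective, we get that $\pi\circ\alpha$ is surjective. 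Because $\ker\varphi$ is superfluous in $P$, Remark \ref{lb21} forces $\alpha$ itself to be surjective.

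Next I would split $\alpha$. Now $\alpha:Q\twoheadrightarrow P$ is an epimorphism onto the projective module $P$, so it splits: there is $\beta:P\to Q$ with $\alpha\circ\beta=\id_P$. Set $P'=\beta(P)$. Then $\beta$ is injective (being split by $\alpha$), so $\alpha|_{P'}:P'\xrightarrow{\simeq}P$ is an isomorphism with inverse $\beta$. It remains to check $Q=\ker\alpha\oplus P'$. For any $\xi\in Q$, write $\xi=\beta\alpha(\xi)+(\xi-\beta\alpha(\xi))$; the first term is in $P'$, and applying $\alpha$ to the second term gives $\alpha(\xi)-\alpha\beta\alpha(\xi)=\alpha(\xi)-\alpha(\xi)=0$, so the second term is in $\ker\alpha$; thus $Q=\ker\alpha+P'$. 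If $\eta\in\ker\alpha\cap P'$, write $\eta=\beta(p)$; then $0=\alpha(\eta)=\alpha\beta(p)=p$, so $\eta=\beta(0)=0$. Hence the sum is direct, and $\alpha|_{P'}$ is the asserted isomorphism.

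I do not expect a serious obstacle here: the argument is the standard "epimorphism onto a projective splits, plus superfluous kernel forces surjectivity" package, and nothing uses unitality of $A$ — only the module-theoretic notions of projective, epimorphism, and superfluous submodule, all of which have been set up in the excerpt. The one spot that needs a moment's care is the very first surjectivity claim (that $\alpha$ is onto), since it is easy to state it as obvious when in fact it is precisely the place the projective-cover hypothesis on $P$ is used; I would make sure to invoke Remark \ref{lb21} explicitly there rather than gesture at it. Everything else is bookkeeping with the splitting $\beta$.
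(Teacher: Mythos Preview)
Your proof is correct and follows essentially the same route as the paper: lift $\psi$ through $\varphi$ by projectivity of $Q$, use superfluousness of $\ker\varphi$ via Remark \ref{lb21} to get surjectivity of $\alpha$, then split $\alpha$ using projectivity of $P$ and set $P'=\beta(P)$. The paper's proof is terser on the final verification that $Q=\ker\alpha\oplus P'$, but your detailed check is exactly what is meant.
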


By setting $L=\ker\alpha$, it follows that \eqref{eq2} is equivalent to
\begin{equation}\label{eq3}
\begin{tikzcd}
                       & P \arrow[d,"\varphi", two heads] \\
L\oplus P \arrow[ru,"0\oplus \id_P"] \arrow[r,"0\oplus\varphi"', two heads] & M          
\end{tikzcd}
\end{equation}

\begin{proof}
The existence of $\alpha$ follows from the fact that $Q$ is projective and that $\varphi$ is an epimorphism. Moreover, since $\ker\varphi$ is superfluous and $\varphi\circ\alpha$ is surjective, by Rem. \ref{lb21}, $\alpha$ is surjective. Therefore, since $P$ is projective, the epimorphism $\alpha$ splits, i.e., there is a morphism $\beta:P\rightarrow Q$ such that $\alpha\circ\beta:P\rightarrow P$ equals $\id_P$. One sees that $P'=\beta(P)$ fulfills the requirement.
\end{proof}

It follows that projective covers are unique up to isomorphisms:

\begin{co}\label{lb22}
Let $M$ be a left $A$-module with projective covers $\varphi:P\rightarrow M$ and $\psi:Q\rightarrow M$. Then there exists an isomorphism $\alpha:Q\rightarrow P$ such that \eqref{eq2} commutes.
\end{co}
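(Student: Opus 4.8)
The statement to prove is Corollary~\ref{lb22}: given two projective covers $\varphi:P\to M$ and $\psi:Q\to M$, there is an isomorphism $\alpha:Q\to P$ making the triangle commute.

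The plan is to apply Proposition~\ref{lb23} twice, symmetrically. First, since $\varphi:P\to M$ is a projective cover and $\psi:Q\to M$ is an epimorphism from a projective module, Proposition~\ref{lb23} furnishes a morphism $\alpha:Q\to P$ with $\varphi\circ\alpha=\psi$, and moreover gives a decomposition $Q=\ker\alpha\oplus P'$ with $\alpha|_{P'}:P'\xrightarrow{\simeq}P$ an isomorphism. So it only remains to show that $\ker\alpha=0$, i.e., that $\alpha$ is already an isomorphism.

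To see $\ker\alpha=0$, I would run the argument in the other direction: since $\psi:Q\to M$ is a projective cover and $\varphi:P\to M$ is an epimorphism from a projective module, Proposition~\ref{lb23} (with the roles of $P$ and $Q$ swapped) shows that \emph{any} lift $\beta:P\to Q$ with $\psi\circ\beta=\varphi$ is surjective. Now take $\beta=(\alpha|_{P'})^{-1}$ composed with the inclusion $P'\hookrightarrow Q$; this satisfies $\psi\circ\beta = \varphi\circ\alpha\circ(\alpha|_{P'})^{-1} = \varphi$ (using $\varphi\circ\alpha=\psi$ on $P'$, wait—let me be careful: on $P'$ we have $\varphi\circ\alpha|_{P'}$ is an iso onto... actually $\psi|_{P'}=\varphi\circ\alpha|_{P'}$, so $\psi\circ\beta = \psi|_{P'}\circ(\alpha|_{P'})^{-1} = \varphi\circ\alpha|_{P'}\circ(\alpha|_{P'})^{-1}=\varphi$). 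Hence $\beta$ is surjective. But the image of $\beta$ is exactly $P'$, so $P'=Q$, forcing $\ker\alpha=0$. Therefore $\alpha:Q\to P$ is an isomorphism, and it satisfies $\varphi\circ\alpha=\psi$ by construction.

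There is essentially no obstacle here; the only care needed is bookkeeping about which module plays the role of "the cover" in each invocation of Proposition~\ref{lb23}, and checking that the composite lift $\beta$ actually satisfies $\psi\circ\beta=\varphi$ so that the surjectivity conclusion of Proposition~\ref{lb23} applies to it. An alternative, slightly slicker finish: once we have $Q=\ker\alpha\oplus P'$ with $P'\cong P$, note that $\psi$ restricted to $\ker\alpha$ is zero (since $\psi=\varphi\circ\alpha$ and $\alpha$ kills $\ker\alpha$), so $\ker\alpha + P' = Q$ with $\psi(P')=\psi(Q)=M$; but $\ker\psi$ is superfluous in $Q$ and $\ker\alpha\subseteq\ker\psi$, so $\ker\alpha$ is superfluous in $Q$ (a submodule of a superfluous submodule is superfluous), whence $P'=Q$ and $\ker\alpha=0$. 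I would likely present this second version since it avoids introducing $\beta$ and only uses the already-established decomposition plus the definition of superfluous.
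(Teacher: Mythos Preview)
Your proposal is correct, and the ``second version'' you say you would present is essentially identical to the paper's proof: after obtaining the decomposition $Q=\ker\alpha\oplus P'$ from Proposition~\ref{lb23}, the paper observes that $\ker\alpha\subseteq\ker\psi$ is superfluous in $Q$, and then $\ker\alpha+P'=Q$ forces $\ker\alpha=0$. Your first version via the explicit section $\beta$ is also correct and only a minor variation on the same idea.
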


\begin{proof}
By Prop. \ref{lb23}, there exists $\alpha$ such that \eqref{eq2} commutes. It remains to show that $\alpha$ is an isomorphism. We assume that \eqref{eq2} equals \eqref{eq3}. Since $0\oplus\varphi:L\oplus P\rightarrow M$ is a projective cover, $L+\ker(\varphi)=\ker(0\oplus\varphi)$ is superfluous, and hence $L$ is superfluous. Thus, since $L+P$ equals $Q=L\oplus P$, we must have $Q=P$ and hence $L=0$. So $\alpha=0\oplus\id_P$ is an isomorphism.  
\end{proof}

\subsection{Projective covers of irreducibles}

\begin{pp}\label{lb24}
Suppose that $\varphi:P\rightarrow M$ is a projective cover of an irreducible left $A$-module $M$. Then $P$ is indecomposable.
\end{pp}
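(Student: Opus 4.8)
The plan is to show that any direct sum decomposition $P = P_1 \oplus P_2$ forces one of the summands to be zero, by exploiting the irreducibility of $M$ together with the superfluousness of $\ker\varphi$. Write $\varphi_i = \varphi|_{P_i} : P_i \to M$. Since $M$ is irreducible, $\varphi_i(P_i)$ is a submodule of $M$, hence either $0$ or all of $M$. Since $\varphi = \varphi_1 + \varphi_2$ is surjective (where I identify $P_i$ with its image in $P$), at least one of $\varphi_1, \varphi_2$ is surjective — say $\varphi_1$. I would then consider the inclusion $\iota_1 : P_1 \hookrightarrow P$. Composing with the quotient map $\pi : P \to P/\ker\varphi \cong M$, the map $\pi \circ \iota_1$ is exactly $\varphi_1$ (up to the identification), which is surjective.

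Now I invoke Rem.~\ref{lb21}: since $\ker\varphi$ is superfluous in $P$ and $\pi \circ \iota_1 : P_1 \to M$ is surjective, the morphism $\iota_1 : P_1 \to P$ must itself be surjective. But $\iota_1$ is an inclusion, so $P_1 = P$, forcing $P_2 = 0$. Hence $P$ is indecomposable.

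One should be a little careful with the bookkeeping: the cleanest formulation of superfluousness for this argument is the submodule version. Since $\varphi_1(P_1) = M$, we have $P_1 + \ker\varphi = P$ (because for any $x \in P$ there is $y \in P_1$ with $\varphi(y) = \varphi(x)$, so $x - y \in \ker\varphi$). As $\ker\varphi$ is superfluous and $P_1$ is a submodule with $P_1 + \ker\varphi = P$, the definition of superfluous gives $P_1 = P$, and therefore $P_2 = 0$. This is really the whole proof; there is no substantial obstacle, only the need to observe that "one of the $\varphi_i$ is surjective" follows from irreducibility of $M$ plus surjectivity of $\varphi$, and that a direct summand containing a copy of $M$ modulo a superfluous submodule must exhaust $P$. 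I expect the only point worth stating explicitly is the identity $P_1 + \ker\varphi = P$, everything else being immediate from the definitions recalled above.
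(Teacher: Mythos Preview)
Your proof is correct and follows essentially the same approach as the paper's: decompose $P$, use irreducibility of $M$ to get that one restriction is surjective, then apply superfluousness of $\ker\varphi$ (via Rem.~\ref{lb21} or the equivalent submodule formulation) to conclude the inclusion of that summand is onto. The only cosmetic difference is that the paper phrases the key step directly through Rem.~\ref{lb21}, while you also spell out the identity $P_1+\ker\varphi=P$.
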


\begin{proof}
Suppose that $P=P'\oplus P''$. Then one of $\varphi|_{P'},\varphi|_{P''}$ (say $\varphi|_{P'}$) is nonzero. Since $M$ is irreducible,  $\varphi|_{P'}:P'\rightarrow M$ must be surjective. So the map $P'\hookrightarrow P'\oplus P''\xrightarrow{\varphi}M$ is surjective. Since $\ker\varphi$ is superfluous, by Rem. \ref{lb21}, $P'\hookrightarrow P'\oplus P''$ is surjective, and hence $P''=0$.
\end{proof}

\begin{thm}\label{lb8}
Let $e\in A$ be a primitive idempotent satisfying
\begin{align*}
\dim eAe<+\infty
\end{align*}
Let $K$ be any proper left $A$-submodule of $Ae$. Then $K$ is superfluous. In other words, the quotient map $Ae\rightarrow Ae/K$ is the projective cover of $Ae/K$.
\end{thm}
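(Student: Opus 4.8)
The plan is to show that any proper submodule $K \leq Ae$ is contained in a unique maximal proper submodule, namely the radical $\rad(Ae)$, and that this radical is superfluous. First I would establish that $Ae$ has a unique maximal proper submodule. Since $e$ is primitive, by Cor. \ref{lb5} the module $Ae$ is indecomposable. Let $J = \rad(Ae)$ denote the intersection of all maximal proper submodules of $Ae$ (this set is nonempty: since $Ae$ is finitely generated, Lem. \ref{lb7} applies). I claim $J$ is itself a proper submodule and that $Ae/J$ is the unique irreducible quotient; the key input is finite-dimensionality. Indeed, consider the algebra $R = eAe$, which is unital (with unit $e$) and finite-dimensional; by Prop. \ref{lb6}, $\End_{A,-}(Ae)^\opp \simeq R$. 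The finite-dimensionality of $R$ forces $R$ to be semiperfect, so $R$ itself is a finite direct sum of indecomposable projectives; since $Ae$ is indecomposable over $A$, the corresponding projective $R$-module structure must make $R$ local — concretely, $R/\rad R$ is a division ring and $\rad R$ is nilpotent.

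The heart of the argument is then to transfer this to $Ae$. I would show $\rad(Ae) = (\rad R)\cdot(Ae) \cdot$(something) — more precisely, that an element $\xi \in Ae$ lies in a maximal submodule iff the "coordinate" of $\xi$ with respect to $e$ lands in $\rad R$, using that $Ae\cdot e = Ae$ and $e\cdot Ae = eAe = R$. The cleanest route: a submodule $N \leq Ae$ is proper iff $e \notin N$ (since $e$ generates $Ae$), and I want to show $N \leq Ae$ is proper iff $N \subseteq \{\xi \in Ae : e\xi' \in \rad R \text{ for appropriate } \xi'\}$... Actually the cleaner formulation is: the submodule $Ae\cdot(\rad R)$ (where $\rad R$ acts by right multiplication via $R = eAe$, noting $(Ae)(eAe) \subseteq Ae$) is proper because $e \in Ae(\rad R)$ would give $e = \sum a_i e r_i$ with $r_i \in \rad R$, whence $e = eee = \sum (ea_ie)r_i \in \rad R$, contradicting that $\rad R$ is a proper ideal. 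And any proper submodule $K$ is contained in it: if $\xi \in K$ with $\xi = ye$ for some $y$, then $e\xi = eye \in R$; if $e\xi \notin \rad R$ then $e\xi$ is a unit in $R$ (as $R/\rad R$ is a division ring and $R$ local), so $e = (e\xi)(e\xi)^{-1}$ with $(e\xi)^{-1} \in R$, hence $e = \xi (e\xi)^{-1} \cdot$... wait, one needs $e = $ (left $A$-multiple of $\xi$): $\xi (e\xi)^{-1} \in Ae$ and left-multiplying is not quite it. The correct move: $e\xi$ invertible in $R$ means there is $s \in R$ with $s(e\xi) = e$, and then $e = s e \xi$; but $s \in R = eAe \subseteq A$, and $e\xi = \xi$ (since $\xi = ye = yee$ so $e\xi = eye$, hmm $\xi$ itself need not satisfy $e\xi = \xi$). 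Let me instead use $\xi \in Ae$ arbitrary: then $\xi e = \xi$, and consider $e\xi \in eAe = R$. If $e\xi$ is invertible in $R$ with inverse $t$, then $te\xi = e$ with $te \in A$, so $e = (te)\xi \in A\xi \subseteq K$, contradiction. Hence for all $\xi \in K$, $e\xi \in \rad R$, i.e. $K \subseteq \{\xi \in Ae : e\xi \in \rad R\} =: J$, and $J$ is visibly a proper submodule (it doesn't contain $e$ since $ee = e \notin \rad R$).

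It remains to check $J$ is superfluous, equivalently (Rem. \ref{lb21}) that $J + L = Ae$ implies $L = Ae$ for any submodule $L$. If $J + L = Ae$ then $e = \xi + \ell$ with $\xi \in J$, $\ell \in L$; applying $e\cdot$ gives $e = e\xi + e\ell$, so $e\ell = e - e\xi = e(e - \xi)$, and $e - \xi$ has $e(e-\xi) = e - e\xi \equiv e \pmod{\rad R}$, so $e - e\xi$ is a unit in $R$ (it's $\equiv e = 1_R$ mod the radical). Pick $t \in R$ with $t(e\ell) = e$; then $e = te\ell = (te)\ell \in A\ell \subseteq L$, hence $L = Ae$. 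This completes the proof, since every proper $K$ sits inside the superfluous submodule $J$ and submodules of superfluous submodules are superfluous (immediate from the definition: if $K \subseteq J$ and $K + L = Ae$ then $J + L = Ae$ so $L = Ae$).

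\textbf{Main obstacle.} The main obstacle is establishing that $R = eAe$ is local — i.e., that a finite-dimensional algebra $R$ whose unique (up to iso) indecomposable projective is $R$ itself has $R/\rad R$ a division ring. This is standard for finite-dimensional algebras (the Wedderburn–Artin / semiperfect theory), but since the paper develops things for possibly non-unital algebras from scratch, I should either cite it cleanly or give the short argument: $\rad R$ is nilpotent (finite-dimensional), $R/\rad R$ is semisimple; if it had more than one simple factor or a non-division factor, lifting idempotents through the nilpotent ideal $\rad R$ would split $R$ as a left module over itself, contradicting indecomposability of $Ae \cong$ (the projective $A$-module corresponding to $R$ under $\End_{A,-}(Ae)^\opp \simeq R$). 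The bookkeeping in the idempotent-lifting step, and making sure the equivalence between indecomposability of $Ae$ over $A$ and of $R$ over $R$ is correctly invoked (via Prop. \ref{lb6} and Cor. \ref{lb3}), is where care is needed.
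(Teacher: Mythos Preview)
Your argument is essentially correct and establishes the theorem, but it takes a different route from the paper, and it contains one minor slip that happens not to matter.

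\textbf{The slip.} You claim that $J = \{\xi \in Ae : e\xi \in \rad R\}$ is a left $A$-submodule. It is not, in general: take $A = M_2(\Cbb)$, $e = E_{11}$, so $R = \Cbb E_{11}$, $\rad R = 0$, and $J = \Cbb E_{21}$; but $E_{12}\cdot E_{21} = E_{11} \notin J$. Fortunately your proof never actually uses that $J$ is a submodule. What you use is: (i) every \emph{proper submodule} $K$ satisfies $K \subseteq J$ as sets (correct, by your unit argument); and (ii) if $L$ is a submodule with $K + L = Ae$, then writing $e = k + \ell$ with $k \in K \subseteq J$, the element $e\ell = e - ek$ lies in $e + \rad R$ and is hence a unit, forcing $e \in A\ell \subseteq L$. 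So the argument survives; just drop the word ``submodule'' from your description of $J$.

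\textbf{Comparison with the paper.} The paper does not pass through the Jacobson radical or invoke locality of $R$ as a separate lemma. Instead it argues directly: given $N + K = Ae$, it uses projectivity of $Ae$ to lift the quotient map $\varphi : Ae \to Ae/K$ through $N$, obtaining $\beta \in \End_{A,-}(Ae) \simeq (eAe)^\opp$ with $\varphi = \varphi\circ\beta$. Then it shows that $\beta$ (equivalently, right multiplication by some $x \in eAe$) is either surjective or nilpotent---the key step being that if $R_x$ had two distinct eigenvalues, a polynomial in $x$ would be a nontrivial idempotent in $eAe$, contradicting primitivity of $e$. If $\beta$ is surjective we are done; if nilpotent then $\varphi = \varphi\circ\beta^n = 0$, a contradiction.

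So the paper proves the ``unit or nilpotent'' dichotomy for $eAe$ inline, as a concrete linear-algebra step, whereas you package it as ``$R$ is local'' and then run an elementary generator argument. Your route avoids explicitly invoking projectivity of $Ae$, which is pleasant, but it trades that for having to establish locality of $R$---and your sketch of that (via idempotent lifting and the slightly garbled remark about $Ae$ ``corresponding to $R$'') is heavier than necessary. The clean statement is simply: primitivity of $e$ means $eAe$ has no nontrivial idempotents, and a finite-dimensional algebra with no nontrivial idempotents is local. That last fact can be proved by exactly the eigenvalue/Fitting argument the paper uses, so ultimately both proofs rest on the same ingredient.
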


\begin{proof}
Step 1. Let $\varphi:Ae\rightarrow Ae/K$ be the quotient map. Let $N$ be a submodule of $Ae$. Assume that $N+K=Ae$; in other words, if we let $\iota:N\hookrightarrow Ae$ be the inclusion, then $\varphi\circ\iota:N\rightarrow Ae/K$ is surjective. Our goal is to show that $N=Ae$.

Since $Ae$ is projective and $\varphi\circ\iota$ is surjective, there is a morphism $\alpha:Ae\rightarrow N$ such that $\varphi=\varphi\circ\iota\circ\alpha$. Let $\beta=\iota\circ\alpha$. Then the following diagram commutes:
\begin{equation}\label{eq1}
\begin{tikzcd}[row sep=large]
                  &             & Ae \arrow[lld,"\alpha"'] \arrow[ld,"\beta"] \arrow[d, two heads, "\varphi"] \\
N \arrow[r, hook,"\iota"'] & Ae \arrow[r,"\varphi"', two heads] & Ae/K                               
\end{tikzcd}
\end{equation}
To prove that $\iota$ is surjective, it suffices to show that $\beta$ is surjective.\\[-1ex]

Step 2. Suppose that $\beta$ is not surjective. Let us find a contradiction. Since $\beta\in\End_{A,-}(Ae)$, by Prop. \ref{lb6}, $\beta$ is the right multiplication by some $x\in eAe$. Let $R_x:eAe\rightarrow eAe$ be the right multiplication of $x$ on $eAe$. Then $R_x$ is not surjective. Otherwise, there exists $a\in A$ such that $R_x(eae)=e$, i.e., $eaex=e$. Then for each $b\in A$, we have $be=beaex=\beta(beae)$, contradicting the fact that $\beta$ is not surjective.

It is well-known that if $T$ is a linear operator on a finite-dimensional $\Cbb$-vector space $W$, then $W$ is the direct sum of generalized eigenspaces of $T$, and the projection operator of $W$ onto each generalized eigenspace is a polynomial of $T$. Therefore, $R_x$ has only one eigenvalue. Otherwise, there is a polynomial $p$ such that $p(R_x)=R_{p(x)}$ is the projection of $eAe$ onto a proper subspace, and hence $p(x)$ is an idempotent in $eAe$ not equal to $0$ or $e$. This is impossible, since $e$ is assumed to be primitive.

Therefore, $R_x$ has a unique eigenvalue, which must be $0$ since $R_x$ is not surjective. By linear algebra, $R_x$ is nilpotent. Since $R_{x^n}=(R_x)^n$, it follows that $x$ is nilpotent, and hence $\beta$ is nilpotent. By \eqref{eq1}, we have $\varphi=\varphi\circ\beta$, and hence $\varphi=\varphi\circ\beta=\varphi\circ\beta^2=\varphi\circ\beta^3=\cdots=0$. This contradicts the fact that $\varphi$ is a surjection onto a nonzero module, finishing the proof.
\end{proof}

\begin{co}\label{lb11}
Let $e\in A$ be a primitive idempotent satisfying $\dim eAe<+\infty$. Then $Ae$ has a unique proper maximal left $A$-submodule, denoted by \pmb{$\rad(Ae)$}.
\end{co}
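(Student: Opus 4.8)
The plan is to derive this corollary directly from Theorem \ref{lb8} together with the general existence of maximal submodules. First I would invoke Lemma \ref{lb7}: since $Ae$ is a nonzero finitely-generated left $A$-module (generated by $e$, because every $x\in Ae$ satisfies $x=xe$), it has a maximal proper left $A$-submodule, so at least one such submodule exists; call it $N$. This shows the set of proper maximal submodules is nonempty, and it remains only to prove uniqueness.

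For uniqueness, suppose $N_1$ and $N_2$ are two proper maximal left $A$-submodules of $Ae$ with $N_1\neq N_2$. By maximality of $N_1$, the submodule $N_1+N_2$ properly contains $N_1$, hence equals $Ae$. Now apply Theorem \ref{lb8} to $N_1$: since $e$ is primitive and $\dim eAe<+\infty$, the proper submodule $N_1$ is superfluous in $Ae$. By the definition of superfluous submodule applied to $L=N_2$, the equality $N_1+N_2=Ae$ forces $N_2=Ae$, contradicting that $N_2$ is proper. Hence $N_1=N_2$, and the unique proper maximal submodule is well-defined; we denote it $\rad(Ae)$.

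I expect there to be essentially no obstacle here — the entire content has been front-loaded into Theorem \ref{lb8} (every proper submodule of $Ae$ is superfluous) and Lemma \ref{lb7} (existence of a maximal proper submodule). The only minor care needed is the observation that $Ae$ is finitely generated as a left $A$-module so that Lemma \ref{lb7} applies, and that "maximal proper" in the sense used here is exactly the hypothesis "$K$ a proper submodule" needed to quote Theorem \ref{lb8}. One could alternatively phrase the uniqueness via the standard fact that the radical is the intersection of all maximal submodules, but the two-submodule argument above is the most economical.
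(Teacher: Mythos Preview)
Your proposal is correct and follows essentially the same argument as the paper: invoke Lemma \ref{lb7} for existence (noting $Ae$ is generated by $e$), then for uniqueness take two distinct maximal proper submodules, observe their sum is $Ae$, and use Theorem \ref{lb8} to get a contradiction via superfluousness. The paper's proof is slightly terser but the logic is identical.
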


It follows from Thm. \ref{lb8} that $Ae$ is the projective cover of the irreducible $Ae/\rad(Ae)$.

\begin{proof}
By Lem. \ref{lb7}, $Ae$ has at least one proper maximal left $A$-submodule. Suppose that $K\neq L$ are proper maximal left $A$-submodules of $Ae$. By the maximality, we have $K+L=Ae$. By Thm. \ref{lb8}, $L$ is superfluous. So $K=Ae$, impossible. 
\end{proof}

\section{Left pseudotraces}

Let $A,B$ be algebras such that $B$ is unital. Fix an $A$-$B$ bimodule $M$. We do not assume that $M_B$ is unital, i.e., $1_B\in B$ acts as the identity on $M$.

\begin{df}\label{lb41}
A \textbf{left coordinate system} of $M$ denotes a collection of morphisms 
\begin{align}\label{eq10}
\alpha_i\in\Hom_{-,B}(B,M)\qquad \wch\alpha^i\in \Hom_{-,B}(M,B)
\end{align}
where $i$ runs through an index set $I$ such that the following conditions hold:
\begin{enumerate}[label=(\alph*)]
\item  For each $\xi\in M$, we have $\wch\alpha^i(\xi)=0$ for all but finitely many $i\in I$, and  $\sum_{i\in I}\alpha_i\circ\wch\alpha^i(\xi)=\xi$.
\item For each $x\in A$ (viewed as an element of $\End_{-,B}(M)$), we have $x\circ\alpha_i=0$ and $\wch\alpha^i\circ x=0$ for all but finitely many $i\in I$.
\end{enumerate}
\end{df}

\begin{rem}\label{lb20}
$M$ is a projective right $B$-module iff there exists $(\alpha_i,\wch\alpha^i)_{i\in I}$ of the form \eqref{eq10} satisfying condition (a).
\end{rem}
\begin{proof}
	Suppose that there exists $(\alpha_i,\wch\alpha^i)_{i\in I}$ such that (a) holds. Define morphisms of right $B$-modules
\begin{gather*}
\Phi:B^{\oplus I}\rightarrow M\qquad \oplus_i b_i\mapsto \sum_i \alpha_i(b_i)\\
\Psi:M\rightarrow B^{\oplus I}\qquad \xi\mapsto \oplus_i\wch\alpha^i(\xi)
\end{gather*}
Then (a) implies that $\Phi\circ \Psi=\id_M$. Thus, $M$ is a direct summand of $B^{\oplus I}$, and hence is projective as a right $B$-module.

Conversely, assume $M$ is projective as a right $B$-module. Then we have an epimorphism $\Phi:B^{\oplus I}\rightarrow M$ and a morphism $\Psi:M\rightarrow  B^{\oplus I}$ such that $\Phi\circ \Psi=\id_M$. For each $i\in I$, let $\iota_{i}:B\rightarrow B^{\oplus I}$ be the inclusion map of $B$ into the $i$-th direct summand, and $\pi_i:B^{\oplus I}\rightarrow B$ be the projection map onto the $i$-th direct summand. Set 
	\begin{align*}
		\alpha_i=\Phi \circ \iota_i\qquad \wch \alpha^i=\pi_i\circ \Psi
	\end{align*}
Then $(\alpha_i,\wch\alpha^i)_{i\in I}$ satisfies (a).
\end{proof}

\begin{df}
Assume that $M$ has a left coordinate system $(\alpha_i,\wch\alpha^i)_{i\in I}$. Define the  \textbf{\pmb{$B$}-trace function}
\begin{gather*}
\Tr^B:A\rightarrow B/[B,B]\qquad x\mapsto \sum_{i\in I}\wch\alpha^i\circ x\circ\alpha_i
\end{gather*}
where the RHS, originally an element of $\End_{-,B}(B)\simeq B$,\footnote{This isomorphism relies on the fact that $B$ is unital.} is descended to $B/[B,B]$.
\end{df}

\begin{lm}\label{lb9}
The definition of $\Tr^B$ is independent of the choice of left coordinate systems.
\end{lm}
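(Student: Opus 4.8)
The plan is to show that given two left coordinate systems $(\alpha_i,\wch\alpha^i)_{i\in I}$ and $(\beta_j,\wch\beta^j)_{j\in J}$ of $M$, the two resulting elements of $B/[B,B]$ agree for every $x\in A$. The standard trick for such Hattori--Stallings-type arguments is to compute the ``interleaved'' double sum $\sum_{i,j}\wch\alpha^i\circ x\circ\beta_j\circ\wch\beta^j\circ\alpha_i$ (all compositions lying in $\End_{-,B}(B)\simeq B$) in two different ways, using the completeness relation $\sum_j\beta_j\circ\wch\beta^j=\id_M$ to collapse it one way to $\sum_i\wch\alpha^i\circ x\circ\alpha_i$, and the relation $\sum_i\alpha_i\circ\wch\alpha^i=\id_M$ the other way to $\sum_j\wch\beta^j\circ x\circ\beta_j$. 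The catch is that these two collapses only match \emph{after} one uses the trace property: $\wch\alpha^i\circ x\circ\beta_j\circ\wch\beta^j\circ\alpha_i$ and $\wch\beta^j\circ\alpha_i\circ\wch\alpha^i\circ x\circ\beta_j$ are of the form $uv$ and $vu$ in $B$, hence equal modulo $[B,B]$.

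Concretely, first I would fix $x\in A$ and for each pair $(i,j)\in I\times J$ set $u_{ij}=\wch\alpha^i\circ x\circ\beta_j\in\Hom_{-,B}(B,B)\simeq B$ and $v_{ij}=\wch\beta^j\circ\alpha_i\in\Hom_{-,B}(B,B)\simeq B$. Condition (b) for each coordinate system, together with condition (a), guarantees that for fixed $i$ only finitely many $j$ give nonzero $u_{ij}$ or $v_{ij}$ and vice versa, so all the sums below are finite in each variable and the double sums are well-defined; I would spell this finiteness out carefully since it is the one place the non-unital/infinite-dimensional generality could bite. Then in $B$ we have $\sum_{i,j}u_{ij}v_{ij}=\sum_i\wch\alpha^i\circ x\circ\big(\sum_j\beta_j\circ\wch\beta^j\big)\circ\alpha_i=\sum_i\wch\alpha^i\circ x\circ\alpha_i$, using condition (a) for the $\beta$-system; similarly $\sum_{i,j}v_{ij}u_{ij}=\sum_j\wch\beta^j\circ\big(\sum_i\alpha_i\circ\wch\alpha^i\big)\circ x\circ\beta_j=\sum_j\wch\beta^j\circ x\circ\beta_j$, using condition (a) for the $\alpha$-system (and that $x$ commutes with the right $B$-action so the insertion is legitimate). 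Since $u_{ij}v_{ij}\equiv v_{ij}u_{ij}\pmod{[B,B]}$ for each $(i,j)$, summing gives $\sum_i\wch\alpha^i\circ x\circ\alpha_i\equiv\sum_j\wch\beta^j\circ x\circ\beta_j$ in $B/[B,B]$, which is exactly the claimed independence.

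The main obstacle is purely bookkeeping: making sure the reordering of the double sum $\sum_{(i,j)\in I\times J}$ into $\sum_i\sum_j$ versus $\sum_j\sum_i$ is justified, i.e., that for fixed $i$ the set $\{j: u_{ij}\neq 0\text{ or }v_{ij}\neq 0\}$ is finite and likewise for fixed $j$. For the $u_{ij}$ part this follows because $x\circ\beta_j=0$ for all but finitely many $j$ (condition (b) for the $\beta$-system applied to $x$) and $\wch\alpha^i\circ x=0$ for all but finitely many $i$ (condition (b) for the $\alpha$-system); for the $v_{ij}$ part, $\wch\beta^j\circ\alpha_i$: for fixed $i$, apply $\wch\beta^j$ to the finitely many coordinates involved, but more directly, $\alpha_i$ has image in $M$ and $\wch\beta^j\circ\alpha_i(1_B)=0$ for all but finitely many $j$ by condition (a) of the $\beta$-system applied to the single element $\alpha_i(1_B)$, while for fixed $j$ one uses that $\wch\beta^j$ composed with the ``$\alpha$ half'' is eventually zero. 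Once these finiteness statements are in hand, no convergence or limiting argument is needed --- everything is a genuine finite manipulation --- so I do not anticipate any deeper difficulty beyond stating these points cleanly.
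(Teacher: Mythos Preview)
Your argument is correct and is essentially the paper's own proof: both insert the other coordinate system's resolution of $\id_M$ into $\wch\alpha^i\circ x\circ\alpha_i$, then use $uv\equiv vu$ in $B/[B,B]$ termwise on the resulting double sum. The only difference is bookkeeping: the paper simply fixes finite sets $I_x\subset I$ and $J_x\subset J$ via condition~(b) (so that $\wch\alpha^i\circ x$, $x\circ\alpha_i$, $\wch\beta^j\circ x$, $x\circ\beta_j$ all vanish off these sets) and works entirely over $I_x\times J_x$; this makes your separate per-variable finiteness discussion of $v_{ij}=\wch\beta^j\circ\alpha_i$ unnecessary---indeed that part of your sketch is imprecise (condition~(b) only applies to elements of $A$, not to $\wch\beta^j$), but it is also superfluous since the support of $u_{ij}=\wch\alpha^i\circ x\circ\beta_j$ is already finite and controls both $u_{ij}v_{ij}$ and $v_{ij}u_{ij}$.
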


\begin{proof}
Suppose that $(\beta_j,\wch\beta^j)_{j\in J}$ is another left coordinate system of the $A$-$B$ bimodule $M$. Let $I_x\subset I$ and $J_x\subset J$ be finite sets such that $\wch\alpha^i\circ x=0,x\circ\alpha_i=0$ for any $i\in I\setminus I_x$, and that $\wch\beta^j\circ x=0,x\circ\beta_j=0$ for any $j\in J\setminus J_x$. Then
\begin{align*}
\sum_{i\in I_x}\wch\alpha^i\circ x\circ\alpha_i=\sum_{i\in I_x,j\in J}\wch\alpha^i\circ x\circ \beta_j\circ\wch\beta^j\circ\alpha_i=\sum_{i\in I_x,j\in J_x}\wch\alpha^i\circ x\circ \beta_j\circ\wch\beta^j\circ\alpha_i
\end{align*}
Since each $\wch\alpha^i\circ x\circ \beta_j$ and $\wch\beta^j\circ\alpha_i$ are in $\End_{-,B}(B)\simeq B$, the RHS above equals
\begin{align*}
\sum_{i\in I_x,j\in J_x}\wch\beta^j\circ\alpha_i\circ\wch\alpha^i\circ x\circ \beta_j=\sum_{j\in J_x}\wch\beta^j\circ x\circ \beta_j
\end{align*}
in $B/[B,B]$.
\end{proof}

\begin{pp}\label{lb37}
$\Tr^B$ is \textbf{symmetric}, i.e., $\Tr^B(xy)=\Tr^B(yx)$ for any $x,y\in A$. Therefore, $\Tr^B$ descends to a linear map $A/[A,A]\rightarrow B/[B,B]$.
\end{pp}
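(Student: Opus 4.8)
The plan is to prove symmetry by the standard device of splicing a pointwise resolution of the identity into the trace function. By condition (a) of Def.~\ref{lb41}, for each $\xi\in M$ the sum $\sum_{j\in I}\alpha_j\circ\wch\alpha^j(\xi)$ has only finitely many nonzero terms and equals $\xi$; I abbreviate this as $\sum_j\alpha_j\circ\wch\alpha^j=\id_M$, understood pointwise. Fix $x,y\in A$. By condition (b), only finitely many $i$ satisfy $x\circ\alpha_i\neq 0$ or $\wch\alpha^i\circ x\neq 0$, and likewise for $y$, so every sum appearing below has finitely many nonzero terms and all rearrangements are legitimate.

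First I would insert $\id_M$ between $x$ and $y$ in $\Tr^B(xy)=\sum_i\wch\alpha^i\circ x\circ y\circ\alpha_i$, obtaining
\[
\Tr^B(xy)=\sum_{i,j}\wch\alpha^i\circ x\circ\alpha_j\circ\wch\alpha^j\circ y\circ\alpha_i
\]
as elements of $\End_{-,B}(B)\simeq B$, hence in $B/[B,B]$. The interchange of $\sum_j$ with the compositions is justified by evaluating both sides at $1_B\in B$, where condition (a) applies and makes the inner sum finite; since $B$ is unital, two right $B$-module maps $B\to B$ agreeing at $1_B$ are equal. Next I would record the elementary observation that under $\End_{-,B}(B)\simeq B$, $T\mapsto T(1_B)$, composition corresponds to multiplication in $B$ in the \emph{same} order: if $S(c)=sc$ and $T(c)=tc$ then $(S\circ T)(c)=stc$. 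Writing $a_{ij}:=(\wch\alpha^i\circ x\circ\alpha_j)(1_B)$ and $b_{ji}:=(\wch\alpha^j\circ y\circ\alpha_i)(1_B)$, the $(i,j)$-summand above is precisely $a_{ij}b_{ji}\in B$, so $\Tr^B(xy)\equiv\sum_{i,j}a_{ij}b_{ji}$ in $B/[B,B]$.

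Finally I would use $a_{ij}b_{ji}\equiv b_{ji}a_{ij}\pmod{[B,B]}$ to rewrite this as $\sum_{i,j}b_{ji}a_{ij}=\sum_{i,j}(\wch\alpha^j\circ y\circ\alpha_i\circ\wch\alpha^i\circ x\circ\alpha_j)(1_B)$, then collapse the sum over $i$ using $\sum_i\alpha_i\circ\wch\alpha^i=\id_M$ once more (inserted between $y$ and $x$), obtaining $\sum_j\wch\alpha^j\circ(yx)\circ\alpha_j=\Tr^B(yx)$. This proves $\Tr^B(xy)=\Tr^B(yx)$ in $B/[B,B]$; since $[A,A]$ is by definition the $\Cbb$-span of the elements $xy-yx$, the linear map $\Tr^B$ vanishes on $[A,A]$ and hence factors through $A/[A,A]$.

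I do not expect a serious obstacle here: once the identity is spliced in, the argument is a one-line computation. The only point requiring genuine care is the bookkeeping of finiteness — at each step one must invoke condition (b) to bound the outer summation index and condition (a) to bound the spliced-in index, so that all double sums are finite and the evaluation-at-$1_B$ argument and the commutations $a_{ij}b_{ji}\equiv b_{ji}a_{ij}$ are applied to honest finite sums — together with the easy but essential check that composition in $\End_{-,B}(B)$ matches multiplication in $B$ rather than in $B^\opp$.
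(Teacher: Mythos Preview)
Your proof is correct and follows essentially the same approach as the paper: splice $\sum_j\alpha_j\circ\wch\alpha^j=\id_M$ into $\Tr^B(xy)$, identify each $\wch\alpha^i\circ x\circ\alpha_j$ and $\wch\alpha^j\circ y\circ\alpha_i$ as elements of $B$ via $\End_{-,B}(B)\simeq B$, and commute modulo $[B,B]$. The only cosmetic difference is that the paper fixes at the outset a single finite index set $I_0$ outside which all of $x\circ\alpha_i$, $\wch\alpha^i\circ x$, $y\circ\alpha_i$, $\wch\alpha^i\circ y$ vanish (using condition (b) for both variables), whereas you control the inner sum via condition (a) evaluated at $1_B$; both are valid and yield the same finite double sum.
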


\begin{proof}
Let $x,y\in A$. Let $I_0\subset I$ be a finite set such that $\wch\alpha^i\circ x=\wch\alpha^i\circ y=0$ and $x\circ\alpha_i=y\circ\alpha_i=0$ for all $i\in I\setminus I_0$. Then
\begin{align*}
\Tr^B(xy)=\sum_{i\in I_0}\wch\alpha^i\circ x\circ y\circ\alpha_i=\sum_{i,j\in I_0}\wch\alpha^i\circ x\circ\alpha_j\circ\wch\alpha^j\circ y\circ\alpha_i
\end{align*}
and similarly
\begin{align*}
\Tr^B(yx)=\sum_{i,j\in I_0}\wch\alpha^j\circ y\circ\alpha_i\circ\wch\alpha^i\circ x\circ\alpha_j
\end{align*}
The two RHS's are equal in $B/[B,B]$, noting that $\wch\alpha^i\circ x\circ\alpha_j$ and $\wch\alpha^j\circ y\circ\alpha_i$ are both in $\End_{-,B}(B)\simeq B$.
\end{proof}

\begin{df}
Let $\phi:B\rightarrow\Cbb$ be a \textbf{symmetric linear functional (SLF)}, i.e., a linear map satisfying $\phi(ab)=\phi(ba)$ for all $a,b\in B$. The (left) \textbf{pseudotrace} associated to $\phi$ (and $M$), denoted by \pmb{$\Tr^\phi$}, is defined to be
\begin{align}
\Tr^\phi=\phi\circ\Tr^B:A\rightarrow\Cbb
\end{align}
It is an SLF on $A$.
\end{df}

Thus, for each $x\in A$ we have
\begin{align}\label{eq7}
\Tr^\phi(x)=\sum_{i\in I}\phi(\wch\alpha^i\circ x\circ\alpha_i(1_B))
\end{align}

\section{AUF algebras and projective covers of irreducibles}

\begin{df}\label{lb4}
An algebra $A$ is called \textbf{almost unital and finite-dimensional (AUF)} if there is a family of mutually orthogonal idempotents $(e_i)_{i\in\fk I}$ such that the following conditions hold:
\begin{itemize}
\item[(a)] For each $i,j\in\fk I$ we have $\dim e_iAe_j<+\infty$.
\item[(b)] $A=\sum_{i,j\in \fk I}e_iAe_j$. (That is, for each $x\in A$ one can find a finite subset $I\subset \fk I$ and a collection $(x_{i,j})_{i,j\in\fk I}$ such that $x=\sum_{i,j\in I}e_ix_{i,j}e_j$.) 
\end{itemize}
Note that (b) automatically implies $A=\bigoplus_{i,j\in \fk I}e_iAe_j$.
\end{df}

It is illuminating to view an element $x\in A$ as an $\fk I\times \fk I$ matrix whose $(i,j)$-entry is $e_ixe_j$.

\begin{rem}\label{lb48}
	Each AUF algebra $A$ is almost unital. 
\end{rem}
\begin{proof}
	For each $x_1,\cdots,x_n\in A$, we can find a subset $I_0\subset \fk I$ such that $x_1,\cdots,x_n\in e'Ae'$, where $e'=\sum_{i\in I_0}e_i$. By choosing $n=1$ and $x_1=x\in A$, we see $x=e'xe'$. By choosing idempotents $x_i=e_i\in A$, we see $e_i\leq e'$ for all $1\leq i\leq n$.
\end{proof}

\begin{lm}\label{lb12}
In Def. \ref{lb4}, one can assume moreover that each $e_i$ is primitive (in $A$).
\end{lm}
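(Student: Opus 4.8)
The plan is to refine the given family $(e_i)_{i\in\fk I}$ by splitting each $e_i$ into primitive orthogonal idempotents. Fix $i\in\fk I$. The algebra $e_iAe_i$ is finite-dimensional by Def.~\ref{lb4}(a), so $1_i := e_i$ is a unit for $e_iAe_i$, and in a finite-dimensional unital algebra any idempotent can be written as a sum of finitely many mutually orthogonal primitive idempotents. (One argues by induction on $\dim e_iAe_i$: if $e_i$ is not primitive in $e_iAe_i$, write $e_i = f + f'$ with $f,f'$ nonzero orthogonal idempotents in $e_iAe_i$; then $fAf = f(e_iAe_i)f$ and $f'Af'$ have strictly smaller dimension, so each splits, and the splittings of $f$ and $f'$ together give a splitting of $e_i$.) Write $e_i = \sum_{k=1}^{n_i} e_{i,k}$ with the $e_{i,k}\in e_iAe_i$ mutually orthogonal primitive idempotents of $e_iAe_i$. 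Since distinct $e_i$'s are orthogonal and $e_{i,k}\in e_iAe_i$, the whole new family $\{e_{i,k} : i\in\fk I,\ 1\le k\le n_i\}$ is a family of mutually orthogonal idempotents in $A$.

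Next I must check the two AUF conditions for the refined family. For condition (b): since $\sum_k e_{i,k} = e_i$, the new family spans the same two-sided ``diagonal-block'' decomposition, i.e.\ $\sum_{(i,k),(j,\ell)} e_{i,k} A e_{j,\ell} = \sum_{i,j} e_i A e_j = A$, using Def.~\ref{lb4}(b) for the original family. For condition (a): $e_{i,k} A e_{j,\ell} \subseteq e_i A e_j$, which is finite-dimensional, hence $e_{i,k}Ae_{j,\ell}$ is finite-dimensional. So the refined family witnesses that $A$ is AUF.

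The one genuinely nontrivial point is that each $e_{i,k}$, which is primitive \emph{in $e_iAe_i$}, is also primitive \emph{in $A$}. Suppose $f\in A$ is an idempotent with $f\le e_{i,k}$, i.e.\ $f e_{i,k} = e_{i,k} f = f$. Then $f = e_{i,k} f e_{i,k} \in e_{i,k}Ae_{i,k} \subseteq e_iAe_i$, so $f$ is an idempotent of the algebra $e_iAe_i$ satisfying $f\le e_{i,k}$ there as well; by primitivity of $e_{i,k}$ in $e_iAe_i$, we get $f = 0$ or $f = e_{i,k}$. Hence $e_{i,k}$ is primitive in $A$. (Equivalently, one may invoke Cor.~\ref{lb5}: $Ae_{i,k}$ is indecomposable iff $e_{i,k}$ is primitive in $A$, and indecomposability of $Ae_{i,k}$ can be read off from $\End_{A,-}(Ae_{i,k})^\opp \simeq e_{i,k}Ae_{i,k}$ having no nontrivial idempotents, by Prop.~\ref{lb6} and Cor.~\ref{lb3}.)

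The main obstacle is simply keeping the two notions of primitivity straight — ``primitive in the corner $e_iAe_i$'' versus ``primitive in $A$'' — and the observation above (that an idempotent dominated by $e_{i,k}$ automatically lives in the corner) is exactly what reconciles them; everything else is bookkeeping about orthogonality and finite-dimensionality of the blocks. Note also that if the original $\fk I$ is finite then so is the refined index set, and if $A$ is strongly AUF this refinement does not disturb that, though neither point is needed for the statement as phrased.
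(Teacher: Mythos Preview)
Your proof is correct and follows essentially the same approach as the paper: refine each $e_i$ into a finite orthogonal sum of primitive idempotents of $e_iAe_i$, then observe these are also primitive in $A$. The only cosmetic difference is that the paper obtains the primitive decomposition by decomposing the finite-dimensional left $e_iAe_i$-module $e_iAe_i$ into indecomposables and invoking Cor.~\ref{lb3} and \ref{lb5}, whereas you argue by direct induction on dimension; also, the paper dismisses the ``primitive in the corner $\Rightarrow$ primitive in $A$'' step with ``Clearly'', while you spell it out --- your argument there is exactly right and fills in what the paper omits.
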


\begin{proof}
Let $(e_i)_{i\in\fk I}$ be as in Def. \ref{lb4}. For each $i\in\fk I$, since $e_iAe_i$ is a finite-dimensional left $e_iAe_i$-module, it is a finite direct sum of indecomposable left $e_iAe_i$-submodules. By Cor. \ref{lb3} and \ref{lb5}, we have a finite direct sum $e_iAe_i=\bigoplus_{k\in\fk K_i} e_iAf_{i,k}$ where $(f_{i,k})_{k\in\fk K_i}$ is a finite family of mutually orthogonal idempotents in $e_iAe_i$, that $\sum_k f_{i,k}=e_i$, and that each $f_{i,k}$ is primitive in $e_iAe_i$. Clearly $f_{i,k}$ is also primitive in $A$. Replacing $(e_i)_{i\in\fk I}$ by $(f_{i,k})_{i\in\fk I,k\in\fk K_i}$ does the job.
\end{proof}

In the remaining part of this section, we always assume that $A$ is AUF.

\begin{rem}\label{lb14}
For any idempotents $e,f\in A$, we have
\begin{align*}
\dim eAf<+\infty
\end{align*}
Indeed, one can find a finite set $I_0\subset \fk I$ such that $e,f\in e'Ae'$ where $e'=\sum_{i\in I_0}e_i$. Then $\dim e'Ae'<+\infty$, and hence $\dim eAf<+\infty$.

It follows that each idempotent $e\in A$ has a (finite) orthogonal primitive decomposition $e=\eps_1+\cdots+\eps_n$. This follows from a decomposition of the finite-dimensional left $eAe$-module $eAe$ into indecomposable submodules.   \hfill\qedsymbol
\end{rem}

Recall Rem. \ref{lb10} about irreducibility.

\begin{thm}\label{lb15}
The following are true.
\begin{enumerate}
\item For each primitive idempotent $e\in A$, let $\rad(Ae)$ be the unique proper maximal left submodule of $Ae$ (cf. Cor. \ref{lb11}). Then $Ae\rightarrow Ae/\rad(Ae)$ gives a projective cover of the irreducible coherent module $Ae/\rad(Ae)$.
\item Any irreducible $M\in\QC(A)$ is isomorphic to $Ae/\rad(Ae)$ for some primitive idempotent $e\in A$.
\item Let $e,f$ be primitive idempotents. Then the following are equivalent:
\begin{enumerate}[label=(\arabic*)]
\item $Ae\simeq Af$ as left $A$-modules.
\item $Ae/\rad(Ae)\simeq Af/\rad(Af)$ as left $A$-modules.
\item $e\simeq f$, i.e., there is a partial isometry (in $A$) from $e$ to $f$. 
\end{enumerate}
\end{enumerate} 
\end{thm}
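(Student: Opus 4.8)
The plan is to prove the three items in the order (1), then (3), then deduce (2), since (2) is essentially the combination of Lemma~\ref{lb7}, Prop.~\ref{lb19}, and the existence of a primitive decomposition.

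For item (1): by Rem.~\ref{lb14}, since $e$ is a primitive idempotent in $A$ (which is AUF) we have $\dim eAe<+\infty$. This is precisely the hypothesis of Thm.~\ref{lb8} and Cor.~\ref{lb11}, so $Ae$ has a unique proper maximal left submodule $\rad(Ae)$, and Thm.~\ref{lb8} tells us that $\rad(Ae)$ (being a proper submodule of $Ae$) is superfluous in $Ae$. Since $Ae$ is projective (it is a direct summand of the free module $A$, hence projective by Prop.~\ref{lb1}) and $Ae/\rad(Ae)$ is irreducible, the quotient map $Ae\twoheadrightarrow Ae/\rad(Ae)$ is by definition a projective cover. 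The module $Ae/\rad(Ae)$ is coherent because $Ae$ is coherent (it is $Af$ with $f=e$ in a one-element index set, cf. Def.~\ref{lb26}) and quotients of coherent modules are quasicoherent and finitely generated. So item (1) is immediate once Rem.~\ref{lb14} is invoked; there is essentially nothing new to do.

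For item (3): the equivalence (1)$\Leftrightarrow$(3) is exactly Cor.~\ref{lb13}, which holds in any algebra. For (1)$\Rightarrow$(2): an isomorphism $Ae\xrightarrow{\simeq}Af$ descends to an isomorphism of the quotients by the respective radicals, since $\rad(Ae)$ and $\rad(Af)$ are the unique maximal proper submodules and an isomorphism must carry one onto the other. For (2)$\Rightarrow$(1), the key point is uniqueness of projective covers (Cor.~\ref{lb22}): by item (1), $Ae$ is the projective cover of $Ae/\rad(Ae)$ and $Af$ is the projective cover of $Af/\rad(Af)$; if the two irreducible quotients are isomorphic, then $Ae$ and $Af$ are projective covers of (isomorphic, hence) the same irreducible module, so $Ae\simeq Af$. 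I expect this to be the only step requiring a genuine appeal to the projective-cover machinery, and it is the ``main obstacle'' only in the mild sense that one must correctly match up the two projective covers through the isomorphism of irreducibles.

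For item (2): let $M\in\QC(A)$ be irreducible. By Prop.~\ref{lb19}, $M\simeq Ae'/N$ for some idempotent $e'\in A$ and some maximal proper left ideal $N$ of $Ae'$. By Rem.~\ref{lb14}, $e'$ has a finite orthogonal primitive decomposition $e'=\eps_1+\cdots+\eps_n$, so $Ae'=\bigoplus_k A\eps_k$. Composing the epimorphism $Ae'\twoheadrightarrow M$ with the inclusions $A\eps_k\hookrightarrow Ae'$, at least one restriction $A\eps_k\to M$ is nonzero, hence surjective because $M$ is irreducible. Thus $M$ is an irreducible quotient of $A\eps_k$ with $\eps_k$ primitive, so $M\simeq A\eps_k/K$ for a maximal proper submodule $K$; by Cor.~\ref{lb11} the only such $K$ is $\rad(A\eps_k)$, giving $M\simeq A\eps_k/\rad(A\eps_k)$ with $\eps_k$ primitive, as desired. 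No serious obstacle arises here beyond assembling previously established facts.
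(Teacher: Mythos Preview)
Your proof is correct and follows essentially the same approach as the paper's: item (1) via Rem.~\ref{lb14} and Thm.~\ref{lb8}, item (3) via Cor.~\ref{lb13} and the uniqueness of projective covers (Cor.~\ref{lb22}), and item (2) by restricting an epimorphism to a primitive summand. The only minor difference is that for (2) the paper uses the global primitive decomposition $A=\bigoplus_i Ae_i$ from Lem.~\ref{lb12}, whereas you decompose a single idempotent $e'$ via Rem.~\ref{lb14}; both routes are equivalent (and your justification that $Ae$ is projective is slightly imprecise---it is projective directly by Prop.~\ref{lb1}(3) with a one-element index set, not because it is a summand of the free module $A$).
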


\begin{proof}
Part 1 was already proved, cf. Thm. \ref{lb8}. (Note that Thm. \ref{lb8} and its consequences are applicable since $\dim eAe<+\infty$ by Rem. \ref{lb14}.)

Part 2: By Prop. \ref{lb19}, $M$ has an epimorphism $\Psi$ from $A$. Let $(e_i)_{i\in\fk I}$ be as in Def. \ref{lb4} such that each $e_i$ is primitive (Lem. \ref{lb12}). Then $A\simeq\bigoplus_i Ae_i$ as left $A$-modules. The restriction of $\Psi$ to some $Ae_i$ must be nonzero, and hence must be surjective. Therefore $M\simeq Ae_i/\rad(Ae_i)$.

Part 3: (1)$\Rightarrow$(2) is obvious. (2)$\Rightarrow$(1) follows from the uniqueness of projective covers (Cor. \ref{lb22}). (1)$\Leftrightarrow$(3) follows from Cor. \ref{lb13}.
\end{proof}

\begin{co}\label{lb25}
Let $P\in\Coh(A)$. The following are equivalent.
\begin{enumerate}[label=(\arabic*)]
\item $P$ is projective and indecomposable.
\item $P$ is the projective cover of an irreducible $M\in\QC(A)$, and hence (by Thm. \ref{lb15}) $P$ is isomorphic to $Ae$ for some primitive idempotent $e\in A$.
\end{enumerate}
\end{co}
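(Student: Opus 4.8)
\textbf{Proof strategy for Corollary \ref{lb25}.}
The plan is to prove both implications by reducing everything to the structure theory already established, in particular to Thm. \ref{lb15} and the uniqueness of projective covers (Cor. \ref{lb22}). First I would prove (2)$\Rightarrow$(1). If $P$ is the projective cover of an irreducible $M\in\QC(A)$, then by Thm. \ref{lb15}(2) we may write $M\simeq Ae/\rad(Ae)$ for some primitive idempotent $e\in A$, and by Thm. \ref{lb15}(1) the quotient map $Ae\rightarrow Ae/\rad(Ae)$ is itself a projective cover of $M$. By the uniqueness of projective covers (Cor. \ref{lb22}), $P\simeq Ae$ as left $A$-modules. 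Since $e$ is primitive, $Ae$ is indecomposable by Cor. \ref{lb5}, and $Ae\in\Coh(A)$ by Def. \ref{lb26}; it is projective by Prop. \ref{lb1} (or Prop. \ref{lb2}). Hence $P$ is projective and indecomposable, and the parenthetical claim that $P\simeq Ae$ for some primitive idempotent is exactly what we just showed.

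Next I would prove (1)$\Rightarrow$(2). Suppose $P\in\Coh(A)$ is projective and indecomposable. By Prop. \ref{lb2}, $P$ is a direct summand of $\bigoplus_{i\in I}Ae_i$ with $I$ finite and each $e_i$ an idempotent. Using Rem. \ref{lb14}, each $e_i$ has a finite orthogonal primitive decomposition, so after refining we may assume each $e_i$ is primitive; thus $P$ is a direct summand of a finite direct sum of modules of the form $Ae$ with $e$ primitive. By Cor. \ref{lb11} each such $Ae$ has a unique maximal submodule $\rad(Ae)$, so $Ae$ is indecomposable with local-type behavior; in particular the Krull--Schmidt property holds for finite direct sums of such modules (endomorphism rings $eAe$ are finite-dimensional by Rem. \ref{lb14}, hence the idempotent-lifting/Krull--Schmidt machinery applies). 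Since $P$ is an indecomposable direct summand of $\bigoplus_j Ae_j$, Krull--Schmidt forces $P\simeq Ae_j$ for some primitive $e_j$. Then by Thm. \ref{lb15}(1), $P\simeq Ae_j$ is the projective cover of the irreducible $Ae_j/\rad(Ae_j)\in\QC(A)$, which is what (2) asserts.

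The main obstacle I anticipate is justifying the Krull--Schmidt step in the non-unital AUF setting, i.e., that an indecomposable direct summand of a finite direct sum $\bigoplus_j Ae_j$ (with each $e_j$ primitive) must be isomorphic to one of the $Ae_j$. The classical argument uses that each $\End_{A,-}(Ae_j)^\opp\simeq e_jAe_j$ (Prop. \ref{lb6}) is a finite-dimensional algebra (Rem. \ref{lb14}), hence local when $e_j$ is primitive (its only idempotents are $0$ and $e_j$ by primitivity, so modulo the Jacobson radical it is a finite-dimensional division-ring-free... more precisely $e_jAe_j$ has no nontrivial idempotents, and a finite-dimensional algebra with no nontrivial idempotents is local). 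From locality of the endomorphism rings, the usual exchange-lemma proof of Krull--Schmidt goes through verbatim, since it only manipulates the finitely many summands and their Hom-spaces, never using a global unit of $A$. Alternatively, one can avoid Krull--Schmidt entirely: apply Lem. \ref{lb7} to get an epimorphism $P\twoheadrightarrow S$ onto an irreducible $S$, note that $P$ projective indecomposable with $\ker$ superfluous (which must be checked — but $P\simeq Ae$ from the summand structure and Thm. \ref{lb8} gives superfluousness) makes $P\to S$ a projective cover, and then identify $S\simeq Ae/\rad(Ae)$ via Thm. \ref{lb15}(2) and conclude by Cor. \ref{lb22}. I would present whichever of these two routes the surrounding text has best prepared; given that projective covers and their uniqueness have just been developed, the second route is likely the intended one and sidesteps the need to invoke Krull--Schmidt as a black box.
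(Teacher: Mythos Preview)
Your (2)$\Rightarrow$(1) is correct but more roundabout than necessary: the paper simply invokes Prop.~\ref{lb24}, which directly states that the projective cover of an irreducible is indecomposable, without first identifying $P\simeq Ae$.

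For (1)$\Rightarrow$(2), your second route has a genuine gap. You want to show that the epimorphism $P\twoheadrightarrow S$ (with $S$ irreducible, furnished by Lem.~\ref{lb7}) is itself a projective cover, which requires the kernel to be superfluous. But the justification you offer for superfluousness---``$P\simeq Ae$ from the summand structure''---is exactly what you are trying to prove, so the argument is circular (or else parasitic on your route 1, defeating the purpose). The paper sidesteps this entirely: rather than showing $P\to S$ is a projective cover, it uses that $S$ \emph{already has} a known projective cover, namely $Ae\to Ae/\rad(Ae)\simeq S$ by Thm.~\ref{lb15}. Then Prop.~\ref{lb23} applies with $Q=P$ (projective, with an epimorphism to $S$) to conclude that $Ae$ is isomorphic to a direct summand of $P$. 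Indecomposability of $P$ forces $P\simeq Ae$, and now $P$ is the projective cover of $S$ after the fact. This is the clean two-line argument your second route was reaching for.

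Your route 1 via Krull--Schmidt is correct in outline, and the locality of $e_jAe_j$ (finite-dimensional with no nontrivial idempotents) is exactly the right observation. But it is overkill here given that Prop.~\ref{lb23} is already on the shelf and does the job in one step.
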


\begin{proof}
(2)$\Rightarrow$(1): This follows from Prop. \ref{lb24}.

(1)$\Rightarrow$(2): By Lem. \ref{lb7}, $P$ has an epimorphism to an irreducible, which (by Thm. \ref{lb15}) is of the form $Ae/\rad(Ae)$ where $e\in A$ is a primitive idempotent. We know that $Ae$ is its projective cover. Since $P$ is projective, by Prop. \ref{lb23}, $Ae$ is a direct summand of $P$. Since $P$ is indecomposable, we must have $P\simeq Ae$.
\end{proof}

\section{Pseudotraces and generating idempotents of strongly AUF algebras}

Let $A$ be AUF. In this section, we show that if $e\in A$ is a generating idempotent, any SLF $\psi$ on $A$ can be recovered from $\psi|_{eAe}$ via the pseudotrace construction. 

\begin{df}\label{lb28}
An idempotent $e\in A$ is called \textbf{generating} if every irreducible $M\in\QC(A)$ has an epimorphism from $Ae$.
\end{df}

\begin{pp}\label{lb16}
Let $e\in A$ be an idempotent. Let $e=\eps_1+\cdots+\eps_n$ be an orthogonal primitive decomposition (cf. Rem. \ref{lb14}). The following are equivalent:
\begin{enumerate}[label=(\arabic*)]
\item $e$ is generating.
\item Any primitive idempotent of $A$ is equivalent to $\eps_i$ for some $i$.
\item Any irreducible $M\in\QC(A)$ is isomorphic to $A\eps_i/\rad(A\eps_i)$ for some $i$.
\end{enumerate}
\end{pp}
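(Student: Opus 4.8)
The plan is to establish the cyclic chain of implications $(1)\Rightarrow(3)\Rightarrow(2)\Rightarrow(1)$. Throughout, the workhorse is Thm. \ref{lb15}: every irreducible object of $\QC(A)$ has the form $Af/\rad(Af)$ for some primitive idempotent $f\in A$, and for primitive idempotents $f,g$ one has $Af\simeq Ag$ iff $Af/\rad(Af)\simeq Ag/\rad(Ag)$ iff $f\simeq g$. I will also use freely that $Ae=A\eps_1\oplus\cdots\oplus A\eps_n$, that each $\eps_i$ is primitive with $\dim\eps_iA\eps_i<+\infty$ by Rem. \ref{lb14}, and hence that each $A\eps_i$ has a unique maximal proper left submodule $\rad(A\eps_i)$ by Cor. \ref{lb11}.

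For $(1)\Rightarrow(3)$: take an irreducible $M\in\QC(A)$ together with an epimorphism $Ae\twoheadrightarrow M$. Restricting to the summands $A\eps_i$ of $Ae$, at least one restriction $A\eps_i\to M$ is nonzero, hence surjective since $M$ is irreducible. Its kernel is a proper submodule $K$ of $A\eps_i$ with quotient $A\eps_i/K\simeq M$ irreducible, hence $K$ is a maximal proper submodule; by the uniqueness in Cor. \ref{lb11}, $K=\rad(A\eps_i)$, and therefore $M\simeq A\eps_i/\rad(A\eps_i)$.

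For $(3)\Rightarrow(2)$: let $f\in A$ be any primitive idempotent. Then $Af\in\QC(A)$ and $Af/\rad(Af)$ is an irreducible object of $\QC(A)$, so by $(3)$ it is isomorphic to $A\eps_i/\rad(A\eps_i)$ for some $i$, and Thm. \ref{lb15}(3) then yields $f\simeq\eps_i$. For $(2)\Rightarrow(1)$: let $M\in\QC(A)$ be irreducible. By Thm. \ref{lb15}(2), $M\simeq Af/\rad(Af)$ for some primitive idempotent $f$; by $(2)$, $f\simeq\eps_i$ for some $i$, so $Af\simeq A\eps_i$ by Cor. \ref{lb13}, whence $M\simeq A\eps_i/\rad(A\eps_i)$. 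Composing the coordinate projection $Ae\twoheadrightarrow A\eps_i$ onto the $i$-th summand of $Ae=\bigoplus_j A\eps_j$ with the quotient map $A\eps_i\twoheadrightarrow A\eps_i/\rad(A\eps_i)\xrightarrow{\simeq} M$ produces an epimorphism $Ae\twoheadrightarrow M$, so $e$ is generating.

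I do not expect a genuine obstacle here: each implication is a short deduction from the structure theory already assembled in Thm. \ref{lb15} and Cor. \ref{lb11}. The one place demanding a little care is the identification of the kernel in $(1)\Rightarrow(3)$: one must remember that $A\eps_i/K$ being simple forces $K$ to be a maximal submodule, and only then can the uniqueness of $\rad(A\eps_i)$ be invoked --- this in turn rests on $\eps_i$ being primitive, which is why passing to the primitive decomposition $e=\eps_1+\cdots+\eps_n$ at the outset is essential.
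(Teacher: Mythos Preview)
Your proof is correct and follows essentially the same approach as the paper: both hinge on the decomposition $Ae=\bigoplus_i A\eps_i$ and on Thm. \ref{lb15}/Cor. \ref{lb11} to identify irreducibles with the $A\eps_i/\rad(A\eps_i)$. The only cosmetic difference is the logical packaging---the paper proves $(1)\Rightarrow(3)$, declares $(3)\Rightarrow(1)$ obvious, and reads off $(2)\Leftrightarrow(3)$ from Thm. \ref{lb15}, whereas you run the cycle $(1)\Rightarrow(3)\Rightarrow(2)\Rightarrow(1)$---but the content is the same.
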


\begin{proof}
(1)$\Rightarrow$(3): Each irreducible $M\in\QC(A)$ has an epimorphism from $Ae=A\eps_1\oplus\cdots\oplus A\eps_n$, and hence an epimorphism from some $A\eps_i$. By Cor. \ref{lb11}, the kernel of this epimorphism is $\rad(A\eps_i)$. Therefore, we have $A\eps_i/\rad(A\eps_i)\simeq M$. 

(3)$\Rightarrow$(1): Obvious.

(2)$\Leftrightarrow$(3): Immediate from Thm. \ref{lb15}.
\end{proof}

\begin{co}\label{lb59}
Let $e,f\in A$ be idempotents such that $e$ is a generating idempotent of $A$ and $e\leq f$. Then $e$ is a generating idempotent of $fAf$.
\end{co}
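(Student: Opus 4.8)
The plan is to verify directly from Def. \ref{lb28} that $e$ is a generating idempotent of the algebra $fAf$, using the already-established structure theory. First I would unwind the definition: I must show that every irreducible $N$ in $\QC(fAf)$ admits an epimorphism from $(fAf)e$. Note that since $e\leq f$, we have $e=fef\in fAf$, and $(fAf)e=fAe$ (because $ae=aee=a(fe)e$ and $f$ acts as a left unit on $fAe$, as $fae = a'e$ when... more carefully: $fAe\subset fAf\cdot e$ since $xe=x(ee)=(xe)e$ and $xe\in fAf$ iff $fxef=xe$, which holds as $fx e f = fxe$ and $e\le f$ gives $fxe=fxe$; conversely $(fAf)e=fAfe=fAe$ using $fe=e$). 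So the target of the desired epimorphism is $fAe$ regarded as a left $fAf$-module.

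The key step is to relate irreducibles of $fAf$ to irreducibles of $A$. I would use the standard ``corner algebra'' correspondence: the functor $M\mapsto fM$ sends $\QC(A)$-irreducibles with $fM\ne 0$ to irreducible $fAf$-modules, and $N\mapsto Ae_? \otimes\cdots$ — but more economically, I would argue via idempotents. By Rem. \ref{lb14}, $f$ has an orthogonal primitive decomposition $f=\eps_1+\cdots+\eps_m$ in $A$; each $\eps_k$ lies in $fAf$ and is primitive there as well (if $g\le \eps_k$ in $fAf$ then $g\le\eps_k$ in $A$ since $g=fgf$ and orthogonality/order relations are the same). Then by Thm. \ref{lb15} applied to the AUF algebra $A$ — note $fAf$ is also AUF, being a corner — every irreducible $N\in\QC(fAf)$ is isomorphic to $(fAf)\eps_k/\rad((fAf)\eps_k)$ for some $k$, and by Prop. \ref{lb16} (applied to $fAf$ with the decomposition $f=\sum_k\eps_k$) it suffices to show every primitive idempotent of $fAf$ is isomorphic \emph{in $fAf$} to some $\eps_k$, which since $f=\sum\eps_k$ is the unit of $fAf$ is automatic from Prop. \ref{lb16}(2) for $fAf$. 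Hence the real content is: show $e$ is a generating idempotent of $fAf$ by checking condition (3) of Prop. \ref{lb16} \emph{for the algebra $fAf$}, i.e. that every irreducible of $\QC(fAf)$ is a quotient of $(fAf)e=fAe$.

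So I would finish as follows. Let $N\in\QC(fAf)$ be irreducible; write $N\cong (fAf)\eps_k/\rad$ with $\eps_k$ primitive in $fAf$, hence $\eps_k\le f$ and $\eps_k$ primitive in $A$. Since $e$ is generating in $A$, by Prop. \ref{lb16}(2) there is a partial isometry $u\in A$ from $\eps_k$ to $\eps_i$ for some $\eps_i$ appearing in a primitive decomposition $e=\eps_1+\cdots+\eps_n$. I want this partial isometry inside $fAf$: replacing $u,v$ by $fuf, fvf$ — but $\eps_i\le e\le f$ and $\eps_k\le f$, so $u\in \eps_i A\eps_k$ already satisfies $fuf=\eps_i f u f\eps_k$... indeed $u=\eps_i u\eps_k=\eps_i(\ldots)\eps_k$ and since $\eps_i=f\eps_i$, $\eps_k=\eps_k f$ we get $u=fuf\in fAf$; likewise $v\in fAf$. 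Thus $\eps_k\simeq\eps_i$ in $fAf$, so by Thm. \ref{lb15}(3) (for $fAf$) we have $(fAf)\eps_k\simeq (fAf)\eps_i$ as left $fAf$-modules, and $(fAf)\eps_i$ is a direct summand of $(fAf)e=fAe$ since $\eps_i\le e$ inside $fAf$ (Cor. \ref{lb3}). Composing the isomorphism with the quotient $(fAf)\eps_k\to N$ and the split surjection $fAe\to (fAf)\eps_i$ yields an epimorphism $fAe=(fAf)e\twoheadrightarrow N$. Therefore $e$ is generating in $fAf$.

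The main obstacle I anticipate is purely bookkeeping: confirming that all the relevant notions — the order relation $\le$, primitivity, partial isometries, the identification $(fAf)e=fAe$, and ``$fAf$ is AUF so Thm. \ref{lb15} applies'' — transfer cleanly between $A$ and its corner $fAf$. None of these is deep, but each needs the one-line check that $f$ acts as a two-sided unit on the relevant corners and that conjugating generators by $f$ keeps everything inside $fAf$; I would state these transfers once as a short remark and then the corollary is immediate.
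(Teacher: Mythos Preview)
Your proposal is correct and follows essentially the same route as the paper: verify Prop.~\ref{lb16}(2) for the corner algebra $fAf$ by observing that a primitive idempotent of $fAf$ is primitive in $A$, and that the partial isometries to a sub-idempotent of $e$ (furnished by Prop.~\ref{lb16}(2) for $A$) automatically lie in $fAf$ since both endpoints are $\le f$. The paper's proof is more streamlined in that it works directly with an arbitrary primitive idempotent $p$ of $fAf$ and criterion (2) of Prop.~\ref{lb16}, whereas you detour through a primitive decomposition of $f$ and criterion (3) via irreducibles; but the substantive step is identical.
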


\begin{proof}
Let $p$ be any primitive idempotent of $fAf$. Then $p$ is a primitive idempotent of $A$. By Prop. \ref{lb16}, if we let $e=\eps_1+\cdots+\eps_n$ be an orthogonal primitive decomposition, then there exist $1\leq i\leq n$ and $u\in \eps_i A p,v\in pA\eps_i$ such that $uv=\eps_i$ and $vu=p$. So $p$ is equivalent in $fAf$ to $\eps_i$. By Prop. \ref{lb16}, we conclude that $e$ is generating in $fAf$.
\end{proof}

\begin{co}\label{lb33}
The following are equivalent.
\begin{enumerate}[label=(\arabic*)]
\item $A$ has a generating idempotent.
\item $\QC(A)$ has finitely many isomorphism classes of irreducible objects.
\item $A$ has finitely many equivalence classes of primitive idempotents.
\end{enumerate}
If one of these conditions holds, we say that $A$ is \textbf{strongly AUF}.
\end{co}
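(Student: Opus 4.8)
The plan is to prove the cycle of implications $(1)\Rightarrow(3)\Rightarrow(2)\Rightarrow(1)$, drawing almost entirely on Thm. \ref{lb15} and Prop. \ref{lb16}, together with the fact (Rem. \ref{lb48}) that an AUF algebra is almost unital. For $(1)\Rightarrow(3)$: if $e\in A$ is a generating idempotent, use Rem. \ref{lb14} to write a finite orthogonal primitive decomposition $e=\eps_1+\cdots+\eps_n$; then the implication $(1)\Rightarrow(2)$ of Prop. \ref{lb16} says every primitive idempotent of $A$ is isomorphic to some $\eps_i$, so there are at most $n$ isomorphism classes of primitive idempotents.

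For $(3)\Rightarrow(2)$: I would consider the map sending the isomorphism class of a primitive idempotent $e$ to the isomorphism class of the irreducible $Ae/\rad(Ae)\in\QC(A)$. This map is well defined and surjective by Thm. \ref{lb15} (part 2 gives surjectivity; part 3 gives that $e\simeq f$ implies $Ae/\rad(Ae)\simeq Af/\rad(Af)$, so the class does not depend on the chosen representative). Hence the isomorphism classes of irreducibles in $\QC(A)$ form a quotient set of the isomorphism classes of primitive idempotents, and finiteness of the latter forces finiteness of the former.

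For $(2)\Rightarrow(1)$, which is the only step needing a construction: let $M_1,\dots,M_n$ represent the isomorphism classes of irreducibles in $\QC(A)$, and by Thm. \ref{lb15}(2) pick primitive idempotents $e_1,\dots,e_n\in A$ with $M_k\simeq Ae_k/\rad(Ae_k)$. Since $A$ is almost unital, condition (b) of that definition furnishes an idempotent $e\in A$ with $e_k\leq e$ for every $k$. By Cor. \ref{lb3} each $Ae_k$ is a direct summand of $Ae$, so the corresponding projection is an $A$-module epimorphism $Ae\twoheadrightarrow Ae_k$; composing it with the quotient map $Ae_k\twoheadrightarrow Ae_k/\rad(Ae_k)\simeq M_k$ produces an epimorphism $Ae\twoheadrightarrow M_k$. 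As every irreducible in $\QC(A)$ is isomorphic to one of the $M_k$, the module $Ae$ surjects onto every irreducible, i.e., $e$ is generating in the sense of Def. \ref{lb28}.

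I do not expect a serious obstacle: Thm. \ref{lb15} and Prop. \ref{lb16} already package the nontrivial structure theory (existence and uniqueness of projective covers of irreducibles, and the dictionary between primitive idempotents and irreducibles), so what remains is bookkeeping. If anything, the one point deserving a sentence of care is well-definedness of the map in $(3)\Rightarrow(2)$, and the one genuinely constructive move is producing a single idempotent dominating finitely many primitive idempotents --- exactly what almost unitality is designed to give.
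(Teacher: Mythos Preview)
Your proof is correct and follows essentially the same route as the paper: both arguments reduce everything to Thm.~\ref{lb15} and Prop.~\ref{lb16}, and your cycle $(1)\Rightarrow(3)\Rightarrow(2)\Rightarrow(1)$ differs only cosmetically from the paper's $(1)\Rightarrow(2)$, $(2)\Leftrightarrow(3)$, $(2)\Rightarrow(1)$. The only minor variation is in $(2)\Rightarrow(1)$: the paper picks the idempotents $e_{i_k}$ from the fixed AUF family $(e_i)_{i\in\fk I}$ of Def.~\ref{lb4} and sums them, while you invoke Thm.~\ref{lb15}(2) to get primitive idempotents and then use almost unitality (Rem.~\ref{lb48}) to dominate them by a single $e$---both constructions work equally well.
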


\begin{proof}
(1)$\Rightarrow$(2): Immediate from Prop. \ref{lb16}.

(2)$\Leftrightarrow$(3): Immediate from Thm. \ref{lb15}.

(2)$\Rightarrow$(1): Let $M_1,\dots,M_n\in\QC(A)$ exhaust all isomorphism classes of irreducibles. Let $(e_i)_{i\in\fk I}$ be as in Def. \ref{lb4}. For each $1\leq k\leq n$, by Prop. \ref{lb19}, $M_k$ has an epimorphism from $A$. Since $A=\bigoplus_{i\in\fk I}Ae_i$, it follows that $M_k$ has an epimorphism from $Ae_{i_k}$ for some $i_k\in\fk I$. If we assume at the beginning that $M_1,\dots,M_n$ are mutually non-isomorphic, then $e_{i_1},\dots,e_{i_n}$ must be distinct, and hence mutually orthogonal. So $e=e_{i_1}+\cdots+e_{i_n}$ is a generating idempotent.
\end{proof}

\begin{thm}\label{lb17}
Assume that $A$ is strongly AUF, and let $e\in A$ be a generating idempotent. Then the $A$-$(eAe)$ bimodule $Ae$ has a left coordinate system. In particular, by Rem. \ref{lb20}, $Ae$ is a projective right $eAe$-module.
\end{thm}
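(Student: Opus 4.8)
The plan is to write down the left coordinate system explicitly, using the AUF structure of $A$ together with the generating hypothesis on $e$. First, invoke Lem. \ref{lb12} to fix a family $(e_i)_{i\in\fk I}$ of mutually orthogonal \emph{primitive} idempotents realizing the AUF structure, so that $A=\bigoplus_{i,j\in\fk I}e_iAe_j$. The only structural facts I will need are: (i) for every $x\in A$ one has $e_ix=0$ and $xe_i=0$ for all but finitely many $i\in\fk I$, and $\sum_i e_ix=x$ (a finite sum) — all immediate from Def. \ref{lb4}(b); and (ii) since $e$ is generating and each $e_i$ is primitive, Prop. \ref{lb16}(2) applied to an orthogonal primitive decomposition $e=\eps_1+\cdots+\eps_n$ (Rem. \ref{lb14}) provides, for each $i\in\fk I$, an index $k(i)\in\{1,\dots,n\}$ and a partial isometry between $e_i$ and $\eps_{k(i)}$, i.e. elements $u_i\in\eps_{k(i)}Ae_i$ and $v_i\in e_iA\eps_{k(i)}$ with $v_iu_i=e_i$ and $u_iv_i=\eps_{k(i)}$.

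Next I define, for each $i\in\fk I$, the maps $\alpha_i\colon eAe\to Ae$ by $\alpha_i(b)=v_ib$ and $\wch\alpha^i\colon Ae\to eAe$ by $\wch\alpha^i(\xi)=u_i\xi$. That these land in the claimed spaces and are right $eAe$-linear is routine: $v_ib=v_i(be)\in Ae$, while $u_i\xi=eu_i\xi e\in eAe$ using $u_i=eu_i$ (since $\eps_{k(i)}=e\eps_{k(i)}$) and $\xi=\xi e$; right-linearity is just associativity. So $\alpha_i\in\Hom_{-,eAe}(eAe,Ae)$ and $\wch\alpha^i\in\Hom_{-,eAe}(Ae,eAe)$.

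Now I check the two conditions of Def. \ref{lb41}. Condition (a): for $\xi\in Ae$ we have $\wch\alpha^i(\xi)=u_i\xi=u_ie_i\xi$ (since $u_i=u_ie_i$), which vanishes whenever $e_i\xi=0$, hence for all but finitely many $i$ by (i); and $\sum_{i\in\fk I}\alpha_i(\wch\alpha^i(\xi))=\sum_i v_iu_i\xi=\sum_i e_i\xi=\xi$, again by (i). Condition (b): given $x\in A$, the map $x\circ\alpha_i$ sends $b\mapsto xv_ib=x(e_iv_i)b$ (since $v_i=e_iv_i$), which is $0$ once $xe_i=0$, hence for all but finitely many $i$; likewise $\wch\alpha^i\circ x$ sends $\xi\mapsto u_ix\xi=(u_ie_i)x\xi$ (since $u_i=u_ie_i$), which is $0$ once $e_ix=0$, hence for all but finitely many $i$. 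Thus $(\alpha_i,\wch\alpha^i)_{i\in\fk I}$ is a left coordinate system of the $A$-$(eAe)$ bimodule $Ae$, and the projectivity of $Ae$ over $eAe$ follows at once from Rem. \ref{lb20}.

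The computations are short, so the real content lies in the setup, and I expect the only delicate point to be the choice in (ii). One must take the $e_i$ to be \emph{primitive} precisely so that the generating hypothesis, via Prop. \ref{lb16}, supplies equivalences $e_i\simeq\eps_{k(i)}$ with $\eps_{k(i)}$ ranging over the \emph{finite} set $\{\eps_1,\dots,\eps_n\}$ — this is what makes $u_i,v_i$ available for \emph{every} $i\in\fk I$ — and one must record that the chosen partial isometries satisfy $u_i=u_ie_i$ and $v_i=e_iv_i$, so that the AUF local-finiteness $e_ix=0=xe_i$ for a.a. $i$ can be fed in to force condition (b). Without the generating assumption some $e_i$ would fail to be equivalent to any primitive summand of $e$, and the corresponding $\alpha_i,\wch\alpha^i$ could not even be defined.
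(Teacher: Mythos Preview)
Your proof is correct and follows essentially the same construction as the paper's own proof: both fix a primitive AUF family $(e_i)_{i\in\fk I}$ via Lem.~\ref{lb12}, take an orthogonal primitive decomposition $e=\eps_1+\cdots+\eps_n$, use Prop.~\ref{lb16} to obtain partial isometries linking each $e_i$ to some $\eps_{k(i)}$, and then define $\alpha_i,\wch\alpha^i$ as left multiplication by these partial isometries. The only difference is cosmetic---your $u_i,v_i$ are the paper's $v_i,u_i$---and you spell out the verification of condition~(b) in Def.~\ref{lb41} where the paper simply says ``one checks easily''.
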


The following construction of a left coordinate system is important and is motivated by \cite[Lem. 3.9]{Ari10}.

\begin{proof}
Let $(e_i)_{i\in\fk I}$ be as in Def. \ref{lb4}. By Lem. \ref{lb12}, we can assume that each $e_i$ is primitive. Let $e=\eps_1+\cdots+\eps_n$ be an orthogonal primitive decomposition of $e$. By Prop. \ref{lb16}, there are partial isometries $u_i,v_i$ such that
\begin{gather*}
v_iu_i=\eps_{k_i}\qquad u_iv_i=e_i\\
u_i\in e_iA\eps_{k_i}\qquad v_i\in \eps_{k_i}Ae_i
\end{gather*} 
where $k_i\in\{1,\dots,n\}$. In particular $u_i\in e_iAe$ and $v_i\in eAe_i$. Let
\begin{gather*}
\alpha_i\in\Hom_{-,eAe}(eAe,Ae)\qquad \wch\alpha^i\in\Hom_{-,eAe}(Ae,eAe) \\
\alpha_i(exe)=u_i\cdot exe\qquad \wch\alpha^i(xe)=v_i\cdot xe 
\end{gather*}
One checks easily that $(\alpha_i,\wch\alpha^i)_{i\in\fk I}$ is a left coordinate system.
\end{proof}

The proof of \cite[Thm. 3.10]{Ari10} can be easily adapted to prove the following theorem.

\begin{thm}\label{lb42}
Assume that $A$ is strongly AUF, and let $e\in A$ be a generating idempotent. Then there is a linear isomorphism
\begin{align*}
\SLF(A)\xlongrightarrow{\simeq} \SLF(eAe)\qquad \psi\mapsto \psi|_{eAe}
\end{align*}
whose inverse is given by
\begin{align*}
\SLF(eAe)\xlongrightarrow{\simeq}\SLF(A)\qquad \phi\mapsto \Tr^\phi
\end{align*}
Here, $\Tr^\phi$ is the pseudotrace on $A$ with respect to $\phi$ and the $A$-($eAe$) bimodule $Ae$.
\end{thm}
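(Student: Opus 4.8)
The plan is to prove the two maps are mutually inverse linear maps. Linearity of $\psi\mapsto\psi|_{eAe}$ is clear, and linearity of $\phi\mapsto\Tr^\phi$ follows since $\phi\mapsto\Tr^\phi=\phi\circ\Tr^{eAe}$ is postcomposition by a fixed linear map $\Tr^{eAe}:A\to eAe/[eAe,eAe]$ (Prop.~\ref{lb37}, using the left coordinate system supplied by Thm.~\ref{lb17}). So the whole content is the two identities $(\Tr^\phi)|_{eAe}=\phi$ for $\phi\in\SLF(eAe)$, and $\Tr^{\psi|_{eAe}}=\psi$ for $\psi\in\SLF(A)$.

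First I would fix notation using the explicit coordinate system built in the proof of Thm.~\ref{lb17}: with $(e_i)_{i\in\fk I}$ a family of mutually orthogonal primitive idempotents as in Def.~\ref{lb4}, $e=\eps_1+\cdots+\eps_n$ an orthogonal primitive decomposition, and partial isometries $u_i\in e_iA\eps_{k_i}$, $v_i\in\eps_{k_i}Ae_i$ with $v_iu_i=\eps_{k_i}$, $u_iv_i=e_i$, the coordinate system is $\alpha_i(exe)=u_i\cdot exe$ and $\wch\alpha^i(xe)=v_i\cdot xe$. Then by \eqref{eq7}, for $x\in A$,
\begin{align*}
\Tr^\phi(x)=\sum_{i\in\fk I}\phi\bigl(\wch\alpha^i(x\alpha_i(e))\bigr)=\sum_{i\in\fk I}\phi(v_i x u_i),
\end{align*}
a finite sum since $v_ixu_i=0$ for all but finitely many $i$ (only finitely many $e_i$ satisfy $e_iAxe\neq 0$, by the AUF condition). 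For the first identity, take $x\in eAe$. I would compute $\sum_i\phi(v_ixu_i)=\sum_i\phi(u_iv_ix)=\sum_i\phi(e_ix)$ using symmetry of $\phi$ on $eAe$ (note $v_ixu_i$ and $u_iv_ix=e_ix$ both lie in $eAe$, the latter because $x\in eAe$). Now $\sum_{i}e_ix$: grouping the $i$ with $e_i\leq\eps_j$ for fixed $j$, one gets $\sum_{i:\,e_i\leq\eps_j}e_i=\eps_j$ — but wait, this requires that the $e_i$ refine the $\eps_j$, which is not automatic. The cleaner route: since $x\in eAe$ and $e=\sum_j\eps_j$, write $x=\sum_j\eps_j x=\sum_j x\eps_j$, and observe that for each $j$ exactly the indices $i$ with $k_i=j$ contribute (since $v_ixu_i\in\eps_{k_i}A\eps_{k_i}$), giving $\sum_{i:k_i=j}\phi(v_ixu_i)$. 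Here I should instead argue directly: $\sum_i v_i x u_i$ need not equal $x$ in $eAe$, but its image under $\phi$ does — because $\sum_i\phi(v_ixu_i)=\sum_i\phi(u_i v_i x)=\phi\bigl((\sum_i u_iv_i)x\bigr)$ only if the sum $\sum_i u_iv_i=\sum_i e_i$ makes sense, which it doesn't in $A$. The correct bookkeeping is that for $x\in eAe$, only finitely many terms are nonzero and $\sum_i u_iv_i x = (\sum_{i\in F}e_i)x = e x = x$ for a suitable finite $F$ containing all relevant indices; so $\Tr^\phi(x)=\phi(x)$, i.e. $(\Tr^\phi)|_{eAe}=\phi$.

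For the second identity $\Tr^{\psi|_{eAe}}=\psi$: given $\psi\in\SLF(A)$ and $x\in A$, I need $\sum_i\psi(v_ixu_i)=\psi(x)$. By symmetry of $\psi$ on $A$, $\psi(v_ixu_i)=\psi(xu_iv_i)=\psi(xe_i)$, so $\sum_i\psi(v_ixu_i)=\sum_i\psi(xe_i)=\psi(x\sum_{i\in F}e_i)$ for a finite $F$ capturing all nonzero terms (those $i$ with $xe_i\neq 0$, finitely many by AUF). Choosing $F$ so that $x\in A(\sum_{i\in F}e_i)$ — possible since $A=\bigoplus_{i,j}e_iAe_j$ — gives $x(\sum_{i\in F}e_i)=x$, hence $\sum_i\psi(v_ixu_i)=\psi(x)$. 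This is where the AUF finiteness is essential: it guarantees both that the defining sum of $\Tr^\phi$ is finite and that $x$ is ``supported'' on finitely many $e_i$. The main obstacle, and the step I expect to require the most care, is precisely this support/finiteness bookkeeping — making sure that for a \emph{fixed} $x$ one can choose one finite set of indices that simultaneously (i) contains every $i$ with $v_ixu_i\neq0$, and (ii) satisfies $x=x\sum_{i\in F}e_i$ (resp.\ $x=\sum_{i\in F}e_i\,x$ in the first part) — rather than tacitly summing $\sum_i e_i$, which is not a valid element of the non-unital algebra $A$. Once that is handled, both identities drop out from symmetry of the functionals plus the partial-isometry relations $v_iu_i=\eps_{k_i}$, $u_iv_i=e_i$, and the fact that $v_ixu_i,\,xe_i$ land in the appropriate corners where the relevant functional is defined.
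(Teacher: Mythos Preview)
Your overall strategy matches the paper's exactly: use the explicit coordinate system from Thm.~\ref{lb17} to get $\Tr^\phi(x)=\sum_i\phi(v_ixu_i)$, then verify the two identities by symmetry and the AUF finiteness. Your argument for the second identity $\Tr^{\psi|_{eAe}}=\psi$ is correct and essentially identical to the paper's (the paper cycles to $\psi(e_ix)$ instead of your $\psi(xe_i)$, but this is immaterial since $\psi\in\SLF(A)$).

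There is, however, a genuine gap in your treatment of the first identity $(\Tr^\phi)|_{eAe}=\phi$. You write $\phi(v_ixu_i)=\phi(u_iv_ix)=\phi(e_ix)$ ``using symmetry of $\phi$ on $eAe$'' and assert that $e_ix\in eAe$ because $x\in eAe$. This is false: $u_i\in e_iA\eps_{k_i}$, so $u_i\notin eAe$ in general (we do \emph{not} have $e_i\le e$), and consequently $u_iv_ix=e_ix\in e_iAe$, not $eAe$. Thus neither the symmetry move nor the evaluation $\phi(e_ix)$ is legitimate, and your subsequent attempt to repair this by summing $\sum_{i\in F}e_ix$ and pulling $\phi$ outside still founders on the same point: $\phi$ is simply undefined on the individual terms $e_ix$.

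The paper's fix is to insert $e$ before applying symmetry. Writing $x=exe$, one has $v_ixu_i=(v_iexe)\cdot(eu_i)$ with both factors in $eAe$ (since $v_i=\eps_{k_i}v_i$ gives $v_iexe\in\eps_{k_i}Ae\subset eAe$, and $u_i=u_i\eps_{k_i}$ gives $eu_i\in eA\eps_{k_i}\subset eAe$). Now symmetry of $\phi$ on $eAe$ is valid and yields $\phi(eu_i\cdot v_iexe)=\phi(ee_iexe)$, and the sum $\sum_i ee_iexe=exe$ is a legitimate finite computation inside $eAe$. The missing idea is precisely this insertion of $e$ to keep all arguments of $\phi$ inside $eAe$; once you make that adjustment your argument goes through.
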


\begin{proof}
Let $u_i,v_i,\alpha_i,\wch\alpha^i$ be as in the proof of Thm. \ref{lb17}. For any $\phi\in\SLF(eAe)$, let us compute $\Tr^\phi$. Let $x\in A$, viewed as an element of $\End_{-,eAe}(Ae)$. Then $\wch\alpha^i\circ x\circ\alpha_i\in\End_{-,eAe}(eAe)$ equals (the left multiplication by) $v_ixu_i$. Then
\begin{align}
\Tr^\phi(x)=\sum_{i\in\fk I}\phi(v_ixu_i)
\end{align}
Note that the RHS is a finite sum since $u_i=e_iu_i$, and since $xe_i=0$ for all but finitely many $i$. 

To show that $\Tr^\phi|_{eAe}=\phi$, we compute
\begin{align*}
\Tr^\phi(exe)=\sum_i\phi(v_iexeu_i)=\sum_i\phi(v_iexe\cdot eu_i)
\end{align*}
Since $v_iexe,eu_i\in eAe$, and since $\phi$ is SLF, we have
\begin{align*}
\Tr^\phi(exe)=\sum_i\phi(eu_i\cdot v_iexe)=\sum_i\phi(ee_iexe)=\phi(exe)
\end{align*}

Finally, let $\psi\in\SLF(A)$. Then for each $x\in A$,
\begin{align*}
\Tr^{\psi|_{eAe}}(x)=\sum_i\psi|_{eAe}(v_ixu_i)=\sum_i\psi(v_ixu_i)=\sum_i\psi(u_iv_ix)=\sum_i\psi(e_ix)=\psi(x)
\end{align*}
This proves $\Tr^{\psi|_{eAe}}=\psi$.
\end{proof}

\section{Projective generators of strongly AUF algebras}

Let $A$ be an AUF algebra.

\begin{rem}\label{lb35}
A left $A$-module $M$ is coherent if and only if $M$ is a quotient module of $(Ae)^{\oplus n}$ where $n\in\Zbb_+$ and $e\in A$ is an idempotent.
\end{rem}

\begin{proof}
``$\Leftarrow$" is obvious. Conversely, let $M\in\Coh(A)$. By Def. \ref{lb26}, $M$ is a quotient module of $Ap_1\oplus\cdots\oplus Ap_n$ where each $p_i$ is an idempotent. By Rem. \ref{lb48}, one can find an idempotent $e\in A$ which is $\geq p_1,\dots,p_n$. Then $M$ is a quotient module of $(Ae)^{\oplus n}$.
\end{proof}

\begin{rem}\label{lb56}
By Rem. \ref{lb35}, if $M\in\Coh(A)$ and $x\in A$, then $\dim xM<+\infty$.
\end{rem}

\begin{proof}
Suppose that $M$ has an epimorphism from $N:=(Ae)^{\oplus n}$ where $e\in A$ is an idempotent. Then $\dim xM\leq \dim xN$. Let $f\in A$ be an idempotent such that $x=fxf$. Then $xAe\subset fAe$, and hence
\begin{align*}
\dim xN=n\dim xAe\leq n\dim fAe<+\infty
\end{align*} 
\end{proof}

\subsection{Basic facts}

\begin{df}
Let $\scr S$ and $\scr T$ be classes of objects in $\Coh(A)$. We say that $\scr S$ \textbf{generates} $\scr T$ if each object of $\scr T$ is a quotient of a \textit{finite} direct sum of objects in $\scr S$.
\end{df}

\begin{df}
We say that $M\in\Coh(A)$ is a \textbf{generator} (of $\Coh(A)$) if it generates every object of $\Coh(A)$, i.e., every $N\in\Coh(A)$ is a quotient module of $M^{\oplus n}$ for some $n\in\Zbb_+$. A generator which is also projective is called a \textbf{projective generator}.
\end{df}

\begin{eg}\label{lb34}
Let $(e_i)_{i\in\fk I}$ be as in Def. \ref{lb4}. Then $\scr S:=\{Ae_i:i\in\fk I\}$ generates $\Coh(A)$.
\end{eg}

\begin{proof}
By the proof of Rem. \ref{lb48}, for any idempotent $e\in A$ one can find a finite set $I_0\subset \mathfrak I$ such that $e\leq\sum_{i\in I_0} e_i$. Therefore, $\scr S$ generates each $Ae$, and hence (by Rem. \ref{lb35}) generates $\Coh(A)$.
\end{proof}

\begin{pp}\label{lb27}
Let $M\in\Coh(A)$ be projective. The following are equivalent.
\begin{enumerate}[label=(\arabic*)]
\item $M$ is a projective generator.
\item Each irreducible $N\in\Coh(A)$ has an epimorphism from $M$.
\end{enumerate}
\end{pp}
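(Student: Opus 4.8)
The plan is to prove the two implications separately. The direction (1)$\Rightarrow$(2) is immediate: if $M$ is a projective generator and $N\in\Coh(A)$ is irreducible, then by definition $N$ is a quotient of $M^{\oplus n}$ for some $n$, so at least one of the $n$ coordinate maps $M\to N$ is nonzero; since $N$ is irreducible, any nonzero map to $N$ is surjective, giving the required epimorphism. The real content is (2)$\Rightarrow$(1). First I would use Example \ref{lb34}: the family $\scr S=\{Ae_i:i\in\fk I\}$ generates $\Coh(A)$, so it suffices to show that $M$ generates each $Ae_i$, and by Rem. \ref{lb35} (and Rem. \ref{lb48}) it further suffices to show $M$ generates $Ae$ for an arbitrary idempotent $e\in A$.

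So fix an idempotent $e\in A$. The strategy is to decompose $Ae$ into indecomposable projectives and handle each summand. By Rem. \ref{lb14}, $e$ has a finite orthogonal primitive decomposition $e=\eps_1+\cdots+\eps_m$, so $Ae=\bigoplus_{k=1}^m A\eps_k$; it is enough to show $M$ generates each $A\eps_k$, i.e. that for each primitive idempotent $\eps$ there is an epimorphism $M^{\oplus r}\twoheadrightarrow A\eps$ for some $r$. By Thm. \ref{lb15}(1), $A\eps\to A\eps/\rad(A\eps)$ is a projective cover of the irreducible coherent module $S:=A\eps/\rad(A\eps)$. By hypothesis (2), there is an epimorphism $M\twoheadrightarrow S$. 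Since $M$ is projective and $A\eps\twoheadrightarrow S$ is a projective cover, Prop. \ref{lb23} (applied with $Q=M$, $P=A\eps$, $\psi:M\twoheadrightarrow S$ the given epimorphism and $\varphi:A\eps\twoheadrightarrow S$) produces a morphism $\alpha:M\to A\eps$ lifting the two maps to $S$, together with a submodule $P'\leq M$ with $M=\ker\alpha\oplus P'$ and $\alpha|_{P'}:P'\xrightarrow{\simeq}A\eps$. In particular $\alpha:M\to A\eps$ is surjective, so $M$ generates $A\eps$ (with $r=1$).

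Assembling: $M$ generates every $A\eps_k$, hence generates $Ae$, hence generates every $Ae_i$, hence (by Example \ref{lb34}) generates $\Coh(A)$. Together with the hypothesis that $M$ is projective, this shows $M$ is a projective generator, completing (2)$\Rightarrow$(1). The main obstacle — though it is a mild one given the machinery already set up — is making sure the lifting step is legitimate: one must know that the target of the projective cover is genuinely coherent (so that Thm. \ref{lb15} applies) and that Prop. \ref{lb23} yields surjectivity of $\alpha$ rather than merely a commuting triangle; the surjectivity is exactly the ``moreover'' clause of Prop. \ref{lb23}, which rests on $\ker\varphi=\rad(A\eps)$ being superfluous (Thm. \ref{lb8}). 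Everything else is bookkeeping with finite direct sums.
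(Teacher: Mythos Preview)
Your proof is correct and follows essentially the same approach as the paper. The only cosmetic difference is that the paper invokes Lem.~\ref{lb12} to assume the $e_i$ are primitive from the outset, whereas you pass to an arbitrary idempotent $e$ and then decompose it into primitives via Rem.~\ref{lb14}; the core step---using Thm.~\ref{lb15} to recognize $A\eps$ as a projective cover and then Prop.~\ref{lb23} to realize it as a direct summand of $M$---is identical.
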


\begin{proof}
(1)$\Rightarrow$(2): Obvious.

(2)$\Rightarrow$(1): Let $(e_i)_{i\in\fk I}$ be as in Def. \ref{lb4}. By Lem. \ref{lb12}, we assume that each $e_i$ is primitive. By Exp. \ref{lb34}, it suffices to prove that $M$ generates each $Ae_i$. By Thm. \ref{lb15}, $Ae_i$ is the projective cover of the irreducible $N:=Ae_i/\rad(Ae_i)$. By (2), $M$ has an epimorphism to $N$. Since $M$ is projective, by Prop. \ref{lb23}, $Ae_i$ is isomorphic to a direct summand of $M$.
\end{proof}

\begin{comment}
\begin{pp}
Let $\mc E$ be a set of idempotents of $A$. Then the following are equivalent.
\begin{enumerate}[label=(\arabic*)]
\item $\scr S:=\{Ae:e\in\mc E\}$ generates $\Coh(A)$.
\item For any primitive idempotent $p\in A$, there exists $e\in\mc E$ such that $p$ is equivalent to a sub-idempotent of $e$.\footnote{Namely, there is an idempotent $e'\in A$ such that $e'\leq e$ and $p\simeq e'$.} 
\end{enumerate}
\end{pp}

\begin{proof}
(2)$\Rightarrow$(1): By Def. \ref{lb26}, any $M\in\Coh(A)$ is a quotient of a finite direct sum of modules of the form $Ap$ where $p\in A$ is an idempotent. Therefore, it suffices to prove that $\scr S$ generates each $Ap$. Since $\dim pAp<+\infty$ (Rem. \ref{lb14}), $p$ is a finite sum of primitive idempotents. Therefore, it suffices to assume that $p$ is primitive. Then there exists $e\in\mc E$ such that $p$ is isomorphic to a sub-idempotent of $e$. Therefore $Ap$ is a quotient of $Ae$. This proves that $\scr S$ generates $Ap$.

(1)$\Rightarrow$(2): By Thm. \ref{lb15}, $N:=Ap/\rad(Ap)$ is irreducible with projective cover $Ap$. Since $\scr S$ generates $N$, $N$ has an epimorphism from a direct sum of members of $\scr S$. Since $N$ is irreducible, there exists $e\in\mc E$ and an epimorphism $Ae\rightarrow N$. Since $Ae$ is projective, by Prop. \ref{lb23}, $Ap$ is isomorphic to a direct summand of $Ae$. This is equivalent to saying that $p$ is isomorphic to a sub-idempotent of $e$.
\end{proof}

\end{comment}

\begin{co}\label{lb32}
Let $e\in A$ be an idempotent. Then the following are equivalent.
\begin{enumerate}[label=(\arabic*)]
\item $Ae$ is a (necessarily projective) generator.
\item $e$ is a generating idempotent.
\end{enumerate}
\end{co}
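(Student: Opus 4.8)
The plan is to reduce both implications to the previously established machinery: Proposition \ref{lb27} (characterizing projective generators by the property that every irreducible has an epimorphism from them) and Definition \ref{lb28} together with Theorem \ref{lb15}. First I would handle (2)$\Rightarrow$(1): if $e$ is a generating idempotent, then by definition every irreducible $M\in\QC(A)$ receives an epimorphism from $Ae$. Since $Ae$ is itself projective (it is a direct summand of the free module $A$, cf. Prop. \ref{lb1}), and since every irreducible in $\Coh(A)$ is in particular an irreducible object of $\QC(A)$ by Rem. \ref{lb10}, Proposition \ref{lb27} applies directly and shows $Ae$ is a projective generator.

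For (1)$\Rightarrow$(2): suppose $Ae$ is a generator. By Proposition \ref{lb27} (the direction (1)$\Rightarrow$(2)), every irreducible $N\in\Coh(A)$ has an epimorphism from $Ae$. It remains to check that this covers \emph{every} irreducible $M\in\QC(A)$, not merely the coherent ones. But by Theorem \ref{lb15}(2), any irreducible $M\in\QC(A)$ is isomorphic to $A\eps/\rad(A\eps)$ for some primitive idempotent $\eps\in A$, and such a module is a quotient of $A\eps$, hence finitely generated and quasicoherent, i.e. coherent. So every irreducible in $\QC(A)$ is in fact coherent, and the epimorphism from $Ae$ supplied by Prop. \ref{lb27} witnesses that $e$ is generating in the sense of Def. \ref{lb28}.

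The parenthetical ``necessarily projective'' in statement (1) is automatic: $Ae$ is always a direct summand of $A$ and hence projective by Prop. \ref{lb1}, so the two conditions really are about $Ae$ being a generator. I expect no serious obstacle here; the only subtlety worth spelling out is the bookkeeping observation that irreducible quasicoherent modules are automatically coherent, which bridges the gap between Def. \ref{lb28} (phrased in $\QC(A)$) and Prop. \ref{lb27} (phrased in $\Coh(A)$). Everything else is a direct invocation of the cited results.
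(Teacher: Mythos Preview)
Your proof is correct and follows essentially the same route as the paper's own proof, which is the terse two-liner ``(1)$\Rightarrow$(2): Clear from Def.~\ref{lb28}. (2)$\Rightarrow$(1): Immediate from Prop.~\ref{lb27}.'' You have simply unpacked these two invocations, and in particular you make explicit the bookkeeping point (via Thm.~\ref{lb15}(2)) that every irreducible object of $\QC(A)$ is automatically coherent, which is what bridges the gap between Def.~\ref{lb28} (stated for $\QC(A)$) and Prop.~\ref{lb27} (stated for $\Coh(A)$); the paper leaves this implicit.

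One very minor remark: your parenthetical justification that $Ae$ is projective because it is ``a direct summand of the free module $A$'' is slightly loose in the non-unital setting, since $A$ need not be unital and the decomposition $A=Ae\oplus A(1-e)$ is not literally available. The cleaner justification is that $Ae$ already satisfies condition (3) of Prop.~\ref{lb1} (or condition (4) of Prop.~\ref{lb2}) on the nose, taking the index set to be a singleton. Your cited reference still lands on the right result, so this does not affect correctness.
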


\begin{proof}
(1)$\Rightarrow$(2): Clear from Def. \ref{lb28}. (2)$\Rightarrow$(1): Immediate from Prop. \ref{lb27}.
\end{proof}

\begin{pp}\label{lb50}
$\Coh(A)$ has a projective generator if and only if $A$ is strongly AUF.
\end{pp}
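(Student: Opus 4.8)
The plan is to reduce both implications to the notion of a \emph{generating idempotent} (Def.~\ref{lb28}) and then invoke the dictionary already established: by Cor.~\ref{lb33}, $A$ is strongly AUF if and only if $A$ possesses a generating idempotent, and by Cor.~\ref{lb32}, for an idempotent $e\in A$ the module $Ae$ is a projective generator of $\Coh(A)$ if and only if $e$ is generating. With these two facts in hand, the statement becomes almost a formality, so the work is really in unwinding definitions correctly.

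For the ``if'' direction, suppose $A$ is strongly AUF. By Cor.~\ref{lb33} choose a generating idempotent $e\in A$. Then $Ae\in\Coh(A)$ is projective (it is trivially a direct summand of itself, so projective by Prop.~\ref{lb2}), and by Cor.~\ref{lb32} it is a generator. Hence $\Coh(A)$ has a projective generator.

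For the ``only if'' direction, let $M$ be a projective generator of $\Coh(A)$. First I would apply Rem.~\ref{lb35} to fix an idempotent $e\in A$, an integer $n\in\Zbb_+$, and an epimorphism $(Ae)^{\oplus n}\twoheadrightarrow M$. The claim is then that $e$ is a generating idempotent. To verify it, take any irreducible $N\in\QC(A)$; by Prop.~\ref{lb19}, $N$ is a quotient of $Ae'$ for some idempotent $e'$, hence $N\in\Coh(A)$, so the generator property of $M$ yields an epimorphism $M^{\oplus k}\twoheadrightarrow N$ for some $k$. Composing with the epimorphism $(Ae)^{\oplus nk}\twoheadrightarrow M^{\oplus k}$ obtained by taking $k$ copies of the one above, we get an epimorphism $(Ae)^{\oplus nk}\twoheadrightarrow N$; since $N$ is irreducible, the restriction of this map to at least one $Ae$ summand is nonzero, hence surjective, so $N$ admits an epimorphism from $Ae$. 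Thus $e$ is generating, and by Cor.~\ref{lb33} the algebra $A$ is strongly AUF.

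There is no serious obstacle here; the only points that need a moment's attention are the observation that an irreducible object of $\QC(A)$ is automatically coherent (so that the generator property of $M$ may be tested against it) and the elementary fact that an epimorphism from a finite direct sum onto an irreducible module is already surjective on one of the summands. I also note that the projectivity of $M$ is not actually used in the ``only if'' direction --- a coherent generator would already force $A$ to be strongly AUF --- although the \emph{projective} generator formulation is the one we want. Alternatively, the ``only if'' direction could be routed through Prop.~\ref{lb27}, but the argument via Rem.~\ref{lb35} above seems the most direct.
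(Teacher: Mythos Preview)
Your proof is correct and follows essentially the same route as the paper: both directions hinge on producing a generating idempotent via Rem.~\ref{lb35} and then invoking Cor.~\ref{lb33}. The paper is slightly terser in the ``only if'' direction---it observes that $Ae$ generates $M$ and hence generates $\Coh(A)$, then appeals (implicitly) to Cor.~\ref{lb32}---whereas you unpack the definition of generating idempotent directly; but this is a difference of presentation, not of substance. Your remark that projectivity of $M$ is unused in the ``only if'' direction is also accurate.
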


\begin{proof}
``$\Leftarrow$" follows from Cor. \ref{lb33} and \ref{lb32}. Conversely, if $\Coh(A)$ has a projective generator $M$, by Rem. \ref{lb35}, an idempotent $e\in A$ can be found such that $Ae$ generates $M$, and hence generates $\Coh(A)$. So $e$ is a generating idempotent. Thus, by Cor. \ref{lb33}, $\Coh(A)$ has finitely many irreducibles. So $A$ is strongly AUF.
\end{proof}

\subsection{Projective generators and endomorphism algebras}

Our next goal is to give criteria for projective generators in terms of the endomorphism algebras. We need the endomorphism algebras to be finite-dimensional:

\begin{pp}\label{lb29}
Let $M,N\in\Coh(A)$. Then
\begin{align*}
\dim \Hom_{A,-}(M,N)<+\infty
\end{align*}
\end{pp}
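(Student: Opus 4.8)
The plan is to reduce the finiteness of $\dim\Hom_{A,-}(M,N)$ to the already-established finiteness of $\dim eAf$ for idempotents $e,f\in A$ (Rem. \ref{lb14}), using the coherence of both $M$ and $N$. First I would invoke Rem. \ref{lb35} to replace $M$ and $N$ by modules built directly from the algebra: choose an idempotent $e\in A$ and $m\in\Zbb_+$ together with an epimorphism $\pi:(Ae)^{\oplus m}\twoheadrightarrow M$. Since $\Hom_{A,-}(-,N)$ is left exact, precomposition with $\pi$ gives an injection $\Hom_{A,-}(M,N)\hookrightarrow\Hom_{A,-}((Ae)^{\oplus m},N)\simeq\Hom_{A,-}(Ae,N)^{\oplus m}$, so it suffices to bound $\dim\Hom_{A,-}(Ae,N)$. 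Here I would appeal to the non-unital analogue of the familiar identification: a morphism $Ae\to N$ is determined by the image of $e$, which must lie in $eN$ (since $T(e)=T(ee)=eT(e)$), and conversely every $\xi\in eN$ gives a well-defined morphism $ae\mapsto a\xi$; thus $\Hom_{A,-}(Ae,N)\simeq eN$ as vector spaces. This is essentially the content of Prop. \ref{lb6} extended from $Af$ to an arbitrary coherent $N$, and the same one-line argument works.

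It therefore remains to show $\dim eN<+\infty$ whenever $N\in\Coh(A)$ and $e\in A$ is an idempotent — but this is exactly Rem. \ref{lb56} (with $x=e$), which asserts $\dim xN<+\infty$ for every $x\in A$. Combining the injection from the first step with the isomorphism $\Hom_{A,-}(Ae,N)\simeq eN$ and the bound $\dim eN<+\infty$ yields
\begin{align*}
\dim\Hom_{A,-}(M,N)\leq m\cdot\dim eN<+\infty,
\end{align*}
completing the proof.

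I do not expect a serious obstacle here: the only points needing care are (i) checking that $\Hom_{A,-}(Ae,N)\simeq eN$ holds for a general left $A$-module $N$ (not just $N=Af$), which is immediate from the computation above and does not require $N$ to be unital or coherent, and (ii) making sure the left-exactness step is applied in the correct variance — since $M$ is the one presented as a quotient, it is the contravariant slot, so precomposition is what we use. Coherence of $N$ enters only through Rem. \ref{lb56}; coherence of $M$ enters only through Rem. \ref{lb35}. Both inputs are already available, so the argument is short.
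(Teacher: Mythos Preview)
Your argument is correct and shares the same opening move as the paper: reduce via an epimorphism onto $M$ to bounding $\dim\Hom_{A,-}(Ae,N)$ for a single idempotent $e$. The second half differs slightly. You identify $\Hom_{A,-}(Ae,N)\simeq eN$ directly (the obvious extension of Prop.~\ref{lb6}) and then invoke Rem.~\ref{lb56}, which already packages the needed finiteness. The paper instead exploits the projectivity of $Ae_i$ to lift morphisms along an epimorphism $\bigoplus_j Af_j\twoheadrightarrow N$, obtaining $\dim\Hom_{A,-}(Ae_i,N)\le\sum_j\dim e_iAf_j<+\infty$ without passing through the identification with $e_iN$. Your route is a bit more streamlined, since Rem.~\ref{lb56} (which precedes Prop.~\ref{lb29} in the paper) already does the work of the lifting step; the paper's version has the minor advantage of being self-contained and making the role of projectivity explicit.
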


\begin{proof}
By Def. \ref{lb26}, there is an epimorphism from a finite direct sum $\bigoplus_i Ae_i$ to $M$, where $e_i$ is an idempotent. By taking composition with this epimorphism, we get
\begin{align}
\Hom_{A,-}(M,N)\rightarrow \Hom_{A,-}\Big(\bigoplus_i Ae_i,N\Big)\simeq\bigoplus_i \Hom_{A,-}(Ae_i,N)
\end{align}
where the first map is injective. Thus, it suffices to prove that each $\Hom_{A,-}(Ae_i,N)$ is finite-dimensional.

Again, we can find an epimorphism  $\Phi:\bigoplus_jAf_j\twoheadrightarrow N$ (where $\bigoplus_j$ is finite). Since $Ae_i$ is projective, each $\alpha\in\Hom_{A,-}(Ae_i,N)$ can be lifted to some $\beta\in\Hom_{A,-}(Ae_i,\bigoplus_j Af_j)$ such that $\alpha=\Phi\circ\beta$. Thus
\begin{align*}
\dim\Hom_{A,-}(Ae_i,N)\leq \dim\Hom_{A,-}\Big(Ae_i,\bigoplus_j Af_j\Big)=\sum_j \dim\Hom_{A,-}(Ae_i,Af_j)
\end{align*}
where $\dim\Hom_{A,-}(Ae_i,Af_j)=\dim e_iAf_j<+\infty$.
\end{proof}

\begin{pp}\label{lb30}
Let $M$ be a left $A$-module. Let $B=\End_{A,-}(M)^\opp$, and let $p,q\in B$ be idempotents. Then an element of $\Hom_{A,-}(Mp,Mq)$ is precisely the right multiplication of an element of $pBq$. In particular, we have a canonical isomorphism
\begin{align*}
\End_{A,-}(Mp)^\opp\simeq pBp
\end{align*}
Consequently, the direct summands of the left $A$-module $Mp$ correspond bijectively to the sub-idempotents of $p$ in $B$.
\end{pp}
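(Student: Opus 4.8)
The plan is to imitate the proof of Proposition \ref{lb6}, the new feature being that $M$ carries commuting left $A$- and right $B$-actions. Since $B=\End_{A,-}(M)^\opp$, each $b\in B$ corresponds to an $A$-linear map $\beta:M\to M$, and setting $\xi\cdot b:=\beta(\xi)$ defines a right $B$-action on $M$ (the $\opp$ is exactly what makes this a right rather than a left action) that commutes with the left $A$-action. Consequently $Mp$ and $Mq$ are left $A$-submodules of $M$, and right multiplication by any element of $B$ is automatically $A$-linear.

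First I would record the easy inclusion: for $b\in pBq$ (so $pb=b=bq$) and $\xi\in Mp$, the element $\xi\cdot b$ satisfies $(\xi\cdot b)\cdot q=\xi\cdot(bq)=\xi\cdot b$, hence lies in $Mq$; so right multiplication by $b$ is a well-defined element of $\Hom_{A,-}(Mp,Mq)$, and it determines $b$, because $\xi\cdot b=0$ for all $\xi\in Mp$ forces $\eta\cdot b=(\eta\cdot p)\cdot b=0$ for all $\eta\in M$, that is, $b=0$. The hard part is the reverse inclusion, and the device is an extension trick: given $T\in\Hom_{A,-}(Mp,Mq)$, define $\wtd T:M\to M$ by $\wtd T(\xi)=T(\xi\cdot p)$, which is $A$-linear because the two actions commute, and let $b\in B$ be the corresponding element. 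One then checks $pb=b$ (because $\wtd T(\xi\cdot p)=T(\xi\cdot p)=\wtd T(\xi)$, so $\wtd T$ is unchanged after precomposing with right multiplication by $p$) and $bq=b$ (because $\wtd T(\xi)=T(\xi\cdot p)\in Mq$ is fixed by right multiplication by $q$). Hence $b=pbq\in pBq$, and for $\xi\in Mp$ we get $\xi\cdot b=\wtd T(\xi)=T(\xi\cdot p)=T(\xi)$, so $T$ is right multiplication by $b$. The one thing to watch throughout is the direction of multiplication when translating composition of endomorphisms of $M$ into products in $B=\End_{A,-}(M)^\opp$; this is the only genuine bookkeeping point.

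With the bijection $pBq\xlongrightarrow{\simeq}\Hom_{A,-}(Mp,Mq)$ established, the special case $p=q$ gives a bijection $pBp\to\End_{A,-}(Mp)$ sending $p$ to $\id_{Mp}$; since $(\xi\cdot b_1)\cdot b_2=\xi\cdot(b_1b_2)$, this bijection reverses products, hence is an algebra isomorphism $\End_{A,-}(Mp)^\opp\simeq pBp$. Finally, for the assertion about direct summands I would observe that a submodule of $Mp$ is a direct summand precisely when it is the image of an idempotent in $\End_{A,-}(Mp)$; transporting idempotents through the isomorphism just proved, the idempotents of $\End_{A,-}(Mp)$ correspond to the idempotents $f\leq p$ of $B$, with the direct summand attached to such an $f$ being $Mf$. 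This is Corollary \ref{lb3} carried over through the isomorphism.
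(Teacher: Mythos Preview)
Your proof is correct and follows essentially the same approach as the paper: both arguments use the extension trick $\wtd T(\xi)=T(\xi p)$ to realize an arbitrary $T\in\Hom_{A,-}(Mp,Mq)$ as right multiplication by an element of $pBq$, and then invoke the analogue of Cor.~\ref{lb3} for the direct-summand statement. Your version is slightly more explicit---you verify injectivity of $b\mapsto(\text{right mult.\ by }b)$ and check directly that $pb=b=bq$ rather than simply defining $y:=p\wht yq$ as the paper does---but the underlying idea is identical.
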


\begin{proof}
This is similar to the proofs of Prop. \ref{lb6} and Cor. \ref{lb3}. Any $y\in pBq$ defines a morphism $Mp\rightarrow Mq$ by right multiplication. Conversely, if $T\in\Hom_{A,-}(Mp,Mq)$, let $\wht T:M\rightarrow M$ be $\wht T(\xi)=T(\xi p)$. Then $\wht T\in\End_{A,-}(M)$, and hence $\wht T$ is the right multiplication by some $\wht y\in B$. Note that $T=\wht T|_{Mp}$, and hence $T(\xi p)=\xi p \wht y$ for each $\xi\in M$. Since $T$ has range in $Mq$, we have $T(\xi p)=\xi p\wht yq$. So $T$ is the right multiplication by $y:=p\wht yq\in pBq$. 
\end{proof}

\begin{thm}\label{lb31}
Let $M\in\Coh(A)$. Let $B=\End_{A,-}(M)^\opp$ which is a finite-dimensional unital algebra (by Prop. \ref{lb29}). Let $p\in B$ be an idempotent. Consider the following statements:
\begin{enumerate}[label=(\arabic*)]
\item As coherent left $A$-modules, $Mp$ generates $M$.
\item $p$ is a generating idempotent of $B$.
\end{enumerate}
Then (2)$\Rightarrow$(1). If $M$ is projective, then (1)$\Leftrightarrow$(2). 
\end{thm}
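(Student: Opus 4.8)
The plan is to prove (2)$\Rightarrow$(1) directly using the categorical characterization of generating idempotents, and then establish the converse in the projective case by transporting the statement through $B$. For (2)$\Rightarrow$(1): by Exp.~\ref{lb34} and Rem.~\ref{lb35} it suffices to show $Mp$ generates every $Ae_i$, equivalently (via the projective covers of Thm.~\ref{lb15}) that $Mp$ surjects onto every irreducible coherent left $A$-module $N$. Write $p=\pi_1+\cdots+\pi_m$ as an orthogonal primitive decomposition in $B$ (Rem.~\ref{lb14} applied to $B$, which is finite-dimensional unital). By Prop.~\ref{lb30}, $Mp=M\pi_1\oplus\cdots\oplus M\pi_m$ as left $A$-modules, and each $M\pi_k$ is an indecomposable direct summand of $M$, hence projective-or-not depending on $M$ — but we do not need projectivity here. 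Since $p$ is a generating idempotent of $B$, Prop.~\ref{lb16} says every primitive idempotent of $B$ is isomorphic to some $\pi_k$; I will need to translate "$Mp$ generates $M$" into a statement purely about $B$-modules. The cleanest route: $M$ itself is a quotient of $(Ae)^{\oplus n}$ for some idempotent $e\in A$ (Rem.~\ref{lb35}); but a more intrinsic approach is to observe that $\mathrm{Hom}_{A,-}(M,-)$ restricted to summands of $M^{\oplus k}$ is faithful, so I should compare "$Mp$ generates $M$" with "$Bp$ generates $B$ in $\mathrm{Coh}(B)$" — the latter being exactly condition (2) by Cor.~\ref{lb32} applied to the algebra $B$.

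Here is the mechanism I would use. Applying the functor $F=\mathrm{Hom}_{A,-}(M,-)$ to a surjection $(Mp)^{\oplus n}\twoheadrightarrow M$ is the key: one checks that $F(M)=B$ and $F(Mp)=Bp$ (as right $B$-modules, i.e. left modules over $B^{\opp}$; one must be careful about the opposite-algebra conventions here, since $B=\End_{A,-}(M)^{\opp}$). The point is that $M$ being a quotient of $(Mp)^{\oplus n}$ should correspond, after applying $F$ and using projectivity/adjunction, to $B$ being a quotient of $(Bp)^{\oplus n}$, which by Cor.~\ref{lb32} (for $B$) is equivalent to $p$ being a generating idempotent of $B$. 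To make this precise without circularity, for (2)$\Rightarrow$(1) I would argue as follows: given a primitive idempotent $q\in B$, condition (2) plus Prop.~\ref{lb16} gives $q\simeq\pi_k$ for some $k$, so $Mq\simeq M\pi_k$ is a direct summand of $Mp$; thus every indecomposable summand of $M$ (these are the $Mq$ for $q$ primitive in $B$, again by Prop.~\ref{lb30}) is a summand of $Mp$, whence $M$ (a finite direct sum of such summands, since $1_B$ has a finite orthogonal primitive decomposition $1_B=q_1+\cdots+q_r$ and $M=Mq_1\oplus\cdots\oplus Mq_r$) is a direct summand — in particular a quotient — of $(Mp)^{\oplus r}$. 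This gives (1), and notably uses nothing about projectivity of $M$.

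For the converse (1)$\Rightarrow$(2) under the hypothesis that $M$ is projective: now $Mp$ is also projective (Prop.~\ref{lb2}, being a direct summand of the projective $M$), and I want to show every primitive idempotent $q\in B$ is isomorphic in $B$ to some $\pi_k$. Equivalently, by Prop.~\ref{lb16} applied to $B$, every indecomposable projective summand of $B$ occurs in $Bp$. Translating back: by Cor.~\ref{lb25}, $Mq$ is the projective cover of an irreducible coherent left $A$-module, and $Mq$ is indecomposable projective. From (1), $M$ — hence its summand $Mq$ — is a quotient of $(Mp)^{\oplus n}$; since $Mq$ is projective this surjection splits, so $Mq$ is a direct summand of $(Mp)^{\oplus n}$. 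Now $Mp=M\pi_1\oplus\cdots\oplus M\pi_m$ with each $M\pi_k$ indecomposable; by the Krull–Schmidt property for $\mathrm{Coh}(A)$-objects with finite-dimensional endomorphism rings (valid here since all $\mathrm{Hom}$ spaces are finite-dimensional by Prop.~\ref{lb29}, so local endomorphism rings and uniqueness of indecomposable decompositions hold), $Mq$ must be isomorphic as a left $A$-module to some $M\pi_k$. Applying $\End_{A,-}(-)^{\opp}$ and Prop.~\ref{lb30}, this gives $qBq\simeq\pi_kB\pi_k$ and in fact $Bq\simeq B\pi_k$ as left $B$-modules, i.e. $q\simeq\pi_k$ in $B$; hence (2) by Prop.~\ref{lb16}.

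The main obstacle I anticipate is \emph{bookkeeping with the opposite-algebra conventions and the direction of module actions}: $B=\End_{A,-}(M)^{\opp}$ acts on $M$ on the right (Prop.~\ref{lb30}), so "left $B$-modules", "generating idempotents of $B$", and the decomposition of $Mp$ must all be matched up carefully, and Cor.~\ref{lb32}/Prop.~\ref{lb16} must be invoked for the correct one of $B$ versus $B^{\opp}$. The second subtlety is justifying the Krull–Schmidt uniqueness step for $\mathrm{Coh}(A)$ — since $\mathrm{Coh}(A)$ is not known to be abelian (Rem.~\ref{lb55}) — but this is fine because the relevant objects ($M$ and its summands, all projective coherent) have finite-dimensional, hence semiperfect, endomorphism rings, and the decomposition argument only takes place among direct summands of a fixed module. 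I would state this as a short lemma or cite the standard fact, rather than reprove it.
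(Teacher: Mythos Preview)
Your argument for (2)$\Rightarrow$(1) is essentially the paper's: decompose $1_B$ into primitive idempotents, note each is isomorphic (via Prop.~\ref{lb16}) to a sub-idempotent of $p$, and conclude $M$ is a summand of $(Mp)^{\oplus r}$. (Your opening remark that it suffices to show $Mp$ generates every $Ae_i$ is a wrong turn---the claim is only that $Mp$ generates $M$, not all of $\Coh(A)$---but you abandon it immediately for the correct argument.)

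For (1)$\Rightarrow$(2) your route genuinely differs from the paper's. You split the surjection $(Mp)^{\oplus n}\twoheadrightarrow Mq$ to exhibit $Mq$ as a summand of $(Mp)^{\oplus n}$, then invoke Krull--Schmidt (legitimate here, since the indecomposable summands $M\pi_k$ have local endomorphism rings $\pi_k B\pi_k$) to match $Mq$ with some $M\pi_k$, and translate back via Prop.~\ref{lb30}. The paper instead uses the projective-cover machinery more directly: by Cor.~\ref{lb25}, $Mq$ is the projective cover of some irreducible $N$; since $Mp$ generates $Mq$ it generates $N$, and since $N$ is irreducible there is already an epimorphism $Mp\twoheadrightarrow N$ from a \emph{single} copy of $Mp$; then Prop.~\ref{lb23} gives $Mq$ as a direct summand of $Mp$ itself. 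This avoids Krull--Schmidt entirely and stays within the tools already built in the paper, sidestepping exactly the subtlety you flagged about $\Coh(A)$ not being abelian. Your approach is correct but imports an extra uniqueness principle; the paper's is more self-contained and yields the slightly sharper conclusion that $Mq$ sits inside $Mp$ rather than a power of it.
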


\begin{proof}
(2)$\Rightarrow$(1): Since $\dim B<+\infty$, we have a primitive orthogonal decomposition $1_B=q_1+\cdots+q_n$ where each $q_j\in B$ is a primitive idempotent. By Prop. \ref{lb16}, each $q_j$ is equivalent to a sub-idempotent of $p$. Thus $Mq_j$ is isomorphic to a direct summand of the left $A$-module $Mp$. So $Mp$ generates $\bigoplus_j Mq_j=M$.

(1)$\Rightarrow$(2): Let $q$ be any primitive idempotent of $B$. Since $Mp$ generates $M$ and since $M$ generates $Mq$, we have that $Mp$ generates $Mq$. We claim that $Mq$ is isomorphic to a direct summand of $Mp$. Then Prop. \ref{lb30} will imply that $q$ is equivalent (in $B$) to a sub-idempotent of $p$. This implies (2), thanks to Prop. \ref{lb16}.

Let us prove the claim, assuming that $M$ is projective. Since $Mq$ is a direct summand of $M$, we see that $Mq$ is projective. Since $q$ is primitive in $B$, by Prop. \ref{lb30}, $Mq$ is an indecomposable left $A$-module. Therefore, by Cor. \ref{lb25}, $Mq$ is the projective cover of an irreducible $N\in\Coh(A)$. Since $Mp$ generates $Mq$, it generates $N$. Thus $N$ has an epimorphism from a finite direct sum of $Mp$. Since $N$ is irreducible, $N$ has an epimorphism from $Mp$. Note that $Mp$ is also projective. Therefore, by Prop. \ref{lb23}, $Mq$ is isomorphic to a direct summand of $Mp$.
\end{proof}

%Using Thm. \ref{lb31}, one obtains the following criterion for projective generators.

\begin{co}\label{lb43}
Assume that $G\in\Coh(A)$ is a projective generator. Let $M$ be a left $A$-module. Then the following are equivalent.
\begin{enumerate}[label=(\arabic*)]
\item $M\in\Coh(A)$, and $M$ is a projective generator (of $\Coh(A)$).
\item There exist $n\in\Zbb_+$ and a generating idempotent $p$ of $B:=\End_{A,-}(G^{\oplus n})^\opp$ such that $M\simeq G^{\oplus n}\cdot p$.
\end{enumerate}
\end{co}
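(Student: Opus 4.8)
The plan is to prove the two implications separately, using Theorem \ref{lb31} applied with $G^{\oplus n}$ in place of $M$, together with the standard fact that projective generators are characterized by covering all irreducibles (Prop. \ref{lb27}).

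First I would prove (2)$\Rightarrow$(1). Assume $M\simeq G^{\oplus n}p$ where $p$ is a generating idempotent of $B=\End_{A,-}(G^{\oplus n})^\opp$. Since $G$ is a projective generator, so is $G^{\oplus n}$, hence $G^{\oplus n}\in\Coh(A)$ and is projective. By Prop. \ref{lb30}, $G^{\oplus n}p$ is a direct summand of $G^{\oplus n}$, so $M\in\Coh(A)$ and $M$ is projective. It remains to show $M$ is a generator. Apply the implication (2)$\Rightarrow$(1) of Theorem \ref{lb31} to the module $G^{\oplus n}$ and the idempotent $p$: since $p$ is a generating idempotent of $B$, the module $G^{\oplus n}p$ generates $G^{\oplus n}$ as a coherent left $A$-module. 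Since $G^{\oplus n}$ generates $G$, which generates $\Coh(A)$, transitivity of ``generates'' gives that $M\simeq G^{\oplus n}p$ generates $\Coh(A)$. Thus $M$ is a projective generator.

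Next I would prove (1)$\Rightarrow$(2). Assume $M\in\Coh(A)$ is a projective generator. Since $M$ is coherent, by Rem. \ref{lb35} there is an epimorphism $(Ae)^{\oplus m}\twoheadrightarrow M$ for some idempotent $e$ and $m\in\Zbb_+$; and since $G$ is a generator, each $Ae$ is a quotient of $G^{\oplus k}$ for suitable $k$, so $M$ is a quotient of $G^{\oplus n}$ for some $n\in\Zbb_+$. Because $M$ is projective, this epimorphism $\pi:G^{\oplus n}\twoheadrightarrow M$ splits, so $M$ is isomorphic to a direct summand of $G^{\oplus n}$. By Prop. \ref{lb30} (applied with $G^{\oplus n}$ in the role of the module, so that $\End_{A,-}(G^{\oplus n})^\opp=B$), direct summands of the left $A$-module $G^{\oplus n}$ correspond bijectively to sub-idempotents of $1_B$, i.e. to idempotents $p\in B$; choose $p$ with $M\simeq G^{\oplus n}p$. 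It remains to check $p$ is a generating idempotent of $B$. For this, apply the implication (1)$\Rightarrow$(2) of Theorem \ref{lb31} to the module $G^{\oplus n}$ (which is projective, being a finite direct sum of copies of the projective module $G$) and the idempotent $p$: we must verify that $G^{\oplus n}p$ generates $G^{\oplus n}$ as a coherent left $A$-module. But $G^{\oplus n}p\simeq M$ is a generator of $\Coh(A)$, so in particular it generates $G^{\oplus n}$. Hence by Theorem \ref{lb31}, $p$ is a generating idempotent of $B$, establishing (2).

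The only mildly delicate point — and the step I would watch most carefully — is the bookkeeping around finiteness: ensuring that a single $n\in\Zbb_+$ works when combining ``$M$ is a quotient of finitely many $Ae$'s'' with ``each $Ae$ is a quotient of finitely many copies of $G$'', and that $B=\End_{A,-}(G^{\oplus n})^\opp$ is a finite-dimensional unital algebra (so that Theorem \ref{lb31} and Prop. \ref{lb16} apply), which follows from Prop. \ref{lb29} since $G^{\oplus n}\in\Coh(A)$. Everything else is a direct invocation of Theorem \ref{lb31}, Prop. \ref{lb30}, Rem. \ref{lb35}, and the transitivity of the generation relation, so there is no substantial obstacle beyond assembling these pieces in the right order.
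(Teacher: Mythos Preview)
Your proof is correct and follows essentially the same approach as the paper: both directions rely on Thm.~\ref{lb31} applied to the projective module $G^{\oplus n}$, with the splitting argument in (1)$\Rightarrow$(2) producing the idempotent $p$. The paper is slightly terser in (1)$\Rightarrow$(2), invoking directly that $M$ (being in $\Coh(A)$) is a quotient of $G^{\oplus n}$ since $G$ is a generator, whereas you route this through Rem.~\ref{lb35} and $Ae$'s---but this is the same content.
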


In particular, if $e\in A$ is a generating idempotent, one can take $G=Ae$. Thus a projective generator of $\Coh(A)$ is (up to isomorphisms) precisely of the form $(Ae)^{\oplus n}p$ where $n\in\Zbb_+$ and $p\in \End_{A,-}((Ae)^{\oplus n})^\opp$
is a generating idempotent.

\begin{proof}
(2)$\Rightarrow$(1): By Thm. \ref{lb31}, $M$ generates $G^{\oplus n}$. So $M$ is a generator. Since $G^{\oplus n}p$ is a direct summand of the projective coherent module $G^{\oplus n}$, $G^{\oplus n}p$ is also projective and coherent.

(1)$\Rightarrow$(2): $M$ has an epimorphism from $G^{\oplus n}$ for some $n\in\Zbb_+$. Since $M$ is projective, this epimorphism splits. So $M$ can be viewed as a direct summand of $G^{\oplus n}$. Let $p$ be the projection of $G^{\oplus n}$ onto $M$, which can be viewed as an endomorphism of $G^{\oplus n}$. So $p$ is an idempotent of $B$, and $M=G^{\oplus n}p$. Since $M$ is a generator, it generates $G^{\oplus n}$. Since $G^{\oplus n}$ is projective, by Thm. \ref{lb31}, $p$ is generating.
\end{proof}

\section{Right pseudotraces}

Let $A$ be an AUF algebra. Let $B$ be a unital algebra. Let $M$ be an $A$-$B$ bimodule, coherent as a left $A$-module. 

For each $y\in B$ and $\xi\in M$, we write $\xi y$ as $y^\opp \xi$. Namely, $y^\opp$ is viewed as an element of $\End_{A,-}(M)$.

\begin{df}\label{lb36}
A \textbf{right coordinate system} of $M$ denotes a collection of morphisms
\begin{align*}
\beta_j\in\Hom_{A,-}(Ae,M)\qquad \wch\beta^j\in \Hom_{A,-}(M,Ae)
\end{align*}
where $e\in A$ is an idempotent (called the \textbf{domain idempotent}), and $j$ runs through a \textit{finite} index set $J$ such that the $\sum_{j\in J}\beta_j\circ\wch\beta^j$ equals $\id_M$.
\end{df}

\begin{rem}\label{lb38}
	$M$ has a right coordinate system iff $M$ is $A$-projective.
\end{rem}

\begin{proof}
By Rem. \ref{lb35}, each  $N\in\Coh(A)$ has an epimorphism from $(Ae)^{\oplus n}$ where $e\in A$ is an idempotent and $n\in\Zbb_+$. This epimorphism splits iff $N$ is projective in $\Coh(A)$. Therefore, similar to Rem. \ref{lb20}, we see that $M$ has a right coordinate system iff $M$ is $A$-projective.
\end{proof}

\begin{rem}
In Def. \ref{lb36}, one can freely enlarge the domain idempotent $e$. More precisely, suppose that $f\in A$ is an idempotent such that $e\leq f$. One can define a new right coordinate system
\begin{gather}\label{eq4}
\begin{gathered}
\gamma_j\in\Hom_{A,-}(Af,M)\qquad \wch\gamma^j\in \Hom_{A,-}(M,Af)\\
\gamma_j(af)=\beta_j(ae)\qquad \wch\gamma^j(\xi)=\wch\beta^j(\xi)
\end{gathered}
\end{gather}
called the  \textbf{canonical extension} of $(\beta_j,\wch\beta^j)_{j\in J}$.
\end{rem}

\begin{df}
Assume that $M$ has a right coordinate system $(\beta_j,\wch\beta^j)_{j\in J}$. For each $\psi\in\SLF(A)$, define the (right) \textbf{pseudotrace} \pmb{${}^\psi\Tr$} associated to $\psi$ to be
\begin{gather*}
{^\psi}\Tr:B\rightarrow\Cbb\qquad
{^\psi}\Tr(y)=\sum_{j\in J}\psi\big((\wch\beta^j\circ y^\opp\circ\beta_j)^\opp\big)
\end{gather*}
noting that $\wch\beta^j\circ y^\opp\circ\beta_j\in\End_{A,-}(Ae)\simeq (eAe)^\opp$. In other words,
\begin{align}\label{eq5}
{^\psi}\Tr(y)=\sum_{j\in J}\psi(\wch\beta^j\circ y^\opp\circ\beta_j(e))
\end{align}
\end{df}

Note that in \eqref{eq5} we have $\beta_j(e)\in M$, and hence $\wch\beta^j\circ y^\opp\circ\beta_j(e)\in Ae$. So
\begin{align*}
\wch\beta^j\circ y^\opp\circ\beta_j(e)=\wch\beta^j\circ y^\opp\circ\beta_j(e^2)=e\wch\beta^j\circ y^\opp\circ\beta_j(e)\in eAe
\end{align*}

\begin{pp}
	Assume that $M$ is $A$-projective. Let $\psi\in\SLF(A)$. Then ${^\psi}\Tr\in\SLF(B)$. Moreover, the definition of ${^\psi}\Tr$ is independent of the choice of right coordinate systems.
\end{pp}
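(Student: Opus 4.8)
The plan is to prove two things: first, that ${}^\psi\Tr$ is symmetric, i.e., ${}^\psi\Tr(yz) = {}^\psi\Tr(zy)$ for all $y,z \in B$; and second, that its value does not depend on the chosen right coordinate system. Both follow the pattern established in the proofs of Prop.~\ref{lb37} and Lem.~\ref{lb9}, with the roles of the two algebras swapped and with the opposite-algebra bookkeeping kept careful. Throughout I fix a right coordinate system $(\beta_j, \wch\beta^j)_{j \in J}$ with domain idempotent $e$, so $\sum_{j \in J} \beta_j \circ \wch\beta^j = \id_M$; note $J$ is finite here, which makes all sums below automatically finite and spares us the ``all but finitely many'' arguments needed in the left case.

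For symmetry, fix $y, z \in B$. Using $\sum_k \beta_k \circ \wch\beta^k = \id_M$ to insert a resolution of the identity between $y^\opp$ and $z^\opp$, I compute
\begin{align*}
{}^\psi\Tr(yz) = \sum_{j} \psi\big((\wch\beta^j \circ (yz)^\opp \circ \beta_j)^\opp\big) = \sum_{j} \psi\big((\wch\beta^j \circ z^\opp \circ y^\opp \circ \beta_j)^\opp\big) = \sum_{j,k} \psi\big((\wch\beta^j \circ z^\opp \circ \beta_k \circ \wch\beta^k \circ y^\opp \circ \beta_j)^\opp\big),
\end{align*}
where I used $(yz)^\opp = z^\opp y^\opp$ in $\End_{A,-}(M)$. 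Each factor $\wch\beta^j \circ z^\opp \circ \beta_k$ and $\wch\beta^k \circ y^\opp \circ \beta_j$ lies in $\Hom_{A,-}(Ae, Ae) \simeq (eAe)^\opp$; writing $a_{jk} := (\wch\beta^j \circ z^\opp \circ \beta_k)^\opp \in eAe$ and $b_{kj} := (\wch\beta^k \circ y^\opp \circ \beta_j)^\opp \in eAe$, the summand is $\psi(a_{jk} b_{kj})$, so the double sum is $\sum_{j,k} \psi(a_{jk} b_{kj})$. Since $\psi \in \SLF(A)$, $\psi(a_{jk} b_{kj}) = \psi(b_{kj} a_{jk})$, and $\sum_{j,k} \psi(b_{kj} a_{jk}) = \sum_{j,k} \psi\big((\wch\beta^k \circ y^\opp \circ \beta_j \circ \wch\beta^j \circ z^\opp \circ \beta_k)^\opp\big) = \sum_k \psi\big((\wch\beta^k \circ y^\opp \circ z^\opp \circ \beta_k)^\opp\big) = {}^\psi\Tr(zy)$, again collapsing via $\sum_j \beta_j \circ \wch\beta^j = \id_M$ and using $y^\opp z^\opp = (zy)^\opp$.

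For independence, let $(\gamma_l, \wch\gamma^l)_{l \in L}$ be a second right coordinate system, with domain idempotent $f$; by the ``canonical extension'' remark I may assume $e = f$ by enlarging both domain idempotents to a common idempotent $\geq e,f$ (this does not change either pseudotrace, which should be checked in a line using the formula for $\gamma_j, \wch\gamma^j$ in \eqref{eq4}). With a common domain idempotent, insert $\sum_l \gamma_l \circ \wch\gamma^l = \id_M$ once on each side of $y^\opp$ in the definition of ${}^\psi\Tr(y)$ computed with the $\beta$'s: $\sum_j \psi\big((\wch\beta^j \circ y^\opp \circ \beta_j)^\opp\big) = \sum_{j,l,l'} \psi\big((\wch\beta^j \circ \gamma_l \circ \wch\gamma^l \circ y^\opp \circ \gamma_{l'} \circ \wch\gamma^{l'} \circ \beta_j)^\opp\big)$, then regroup the factors $(\wch\gamma^{l'} \circ \beta_j \circ \wch\beta^j \circ \gamma_l)^\opp \in eAe$ using the cyclicity of $\psi$ to move them past the others, and collapse $\sum_j \beta_j \circ \wch\beta^j = \id_M$, arriving at $\sum_l \psi\big((\wch\gamma^l \circ y^\opp \circ \gamma_l)^\opp\big)$, which is ${}^\psi\Tr$ computed with the $\gamma$'s. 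The one genuinely delicate point is the consistent translation between composition in $\End_{A,-}(M)$ and multiplication in $B$ versus in $eAe$: one must keep straight that left-module endomorphisms compose in one order while their identification with $(eAe)^\opp$ reverses it, and that $y \mapsto y^\opp$ likewise reverses products — but once the $\Hom_{A,-}(Ae,Ae) \simeq (eAe)^\opp$ identification of Prop.~\ref{lb30} is in place, each step is the mirror image of the corresponding step in Lem.~\ref{lb9} and Prop.~\ref{lb37}.
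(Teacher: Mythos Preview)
Your proof is correct and follows essentially the same approach as the paper's: reduce to a common domain idempotent via canonical extension, then mirror the insertion/cyclicity arguments of Lem.~\ref{lb9} and Prop.~\ref{lb37}. Two cosmetic remarks: for independence a single insertion of $\sum_l \gamma_l\circ\wch\gamma^l=\id_M$ suffices (as in Lem.~\ref{lb9}), and the isomorphism $\End_{A,-}(Ae)\simeq (eAe)^\opp$ is Prop.~\ref{lb6}, not Prop.~\ref{lb30}.
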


\begin{proof}
From \eqref{eq4} and \eqref{eq5}, it is clear that a canonical extension of the right coordinate system does not affect the value of ${^\psi}\Tr(y)$. Also, note that since $A$ is AUF, for any idempotents $e_1,e_2\in A$ there is an idempotent $e_3$ such that $e_1,e_2\leq e_3$. Therefore, to compare ${^\psi}\Tr$ defined by two coordinate systems $(\alpha_\blt,\wch\alpha^\blt)$ and $(\beta_\star,\wch\beta^\star)$, by performing canonical extensions, it suffices to assume that their domain idempotents are equal. Then one can use the same argument as in Lem. \ref{lb9} to show that $(\alpha_\blt,\wch\alpha^\blt)$ and $(\beta_\star,\wch\beta^\star)$ define the same ${^\psi}\Tr$. Finally, similar to the proof of Prop. \ref{lb37}, one shows that ${^\psi}\Tr$ is symmetric.
\end{proof}

\begin{eg}
Let $M=Ae$ and $B=eAe$ where $e\in A$ is an idempotent. Then the identity map on $Ae$ gives a right coordinate system. From this, one sees that if $\psi\in\SLF(A)$ then
\begin{align*}
{^\psi}\Tr=\psi|_{eAe}
\end{align*}
\end{eg}

\begin{eg}\label{lb47}
More generally, let $M=(Ae)^{\oplus n}$ and $B=\End_{A,-}(M)^\opp$. So $B=eAe\otimes\Cbb^{n\times n}$. Let
\begin{align*}
\trc:\Cbb^{n\times n}\rightarrow\Cbb
\end{align*}
be the standard trace on $\Cbb^{n\times n}$. A right coordinate system can be chosen to be the $n$ canonical embeddings $Ae\rightarrow (Ae)^{\oplus n}$ and the $n$ canonical projections $(Ae)^{\oplus n}\rightarrow Ae$. Then one easily sees that
\begin{align*}
{^\psi}\Tr=\psi|_{eAe}\otimes\trc
\end{align*}
\end{eg}

\begin{pp}\label{lb40}
	Assume that $M$ is $A$-projective. Let $p\in B$ be an idempotent. Let $\psi\in\SLF(A)$. Let ${}^\psi\Tr_M:B\rightarrow\Cbb$ be the pseudotrace associated to $M$. Then the pseudotrace ${}^\psi\Tr_{Mp}:pBp\rightarrow\Cbb$ associated to the $A$-$(pBp)$ bimodule $Mp$ is equal to ${}^\psi\Tr_M\big|_{pBp}$, i.e.
\begin{align*}
{}^\psi\Tr_{Mp}={}^\psi\Tr_M\big|_{pBp}
\end{align*}
\end{pp}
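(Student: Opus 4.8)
The plan is to produce a right coordinate system of the $A$-$(pBp)$ bimodule $Mp$ directly from one for $M$, and then compare the two pseudotraces term by term. First I would observe that $p^\opp\in\End_{A,-}(M)$ is an idempotent endomorphism, so $M=Mp\oplus M(1-p)$ as left $A$-modules; hence $Mp$, being a direct summand of the $A$-projective coherent module $M$, is itself $A$-projective and coherent, and the right $pBp$-action on $Mp$ is just the restriction of the $B$-action (it lands in $Mp$ because $py=yp=y$ for every $y\in pBp$), so that ${}^\psi\Tr_{Mp}$ is indeed well defined. Write $\iota\colon Mp\hookrightarrow M$ for the inclusion and $\pi\colon M\to Mp$, $\xi\mapsto\xi p$, for the projection, so that $\pi\circ\iota=\id_{Mp}$ and $\iota\circ\pi=p^\opp$.

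Next I would take a right coordinate system $(\beta_j,\wch\beta^j)_{j\in J}$ of $M$ with domain idempotent $e\in A$, and set
\begin{align*}
\gamma_j:=\pi\circ\beta_j\in\Hom_{A,-}(Ae,Mp),\qquad \wch\gamma^j:=\wch\beta^j\circ\iota\in\Hom_{A,-}(Mp,Ae).
\end{align*}
Then $\sum_{j\in J}\gamma_j\circ\wch\gamma^j=\pi\circ\big(\sum_{j\in J}\beta_j\circ\wch\beta^j\big)\circ\iota=\pi\circ\iota=\id_{Mp}$, so $(\gamma_j,\wch\gamma^j)_{j\in J}$ is a right coordinate system of $Mp$, still with domain idempotent $e$. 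By the independence of ${}^\psi\Tr_{Mp}$ of the choice of right coordinate system (already established), it suffices to evaluate ${}^\psi\Tr_{Mp}$ using $(\gamma_j,\wch\gamma^j)$.

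For the comparison, fix $y\in pBp$. Right multiplication by $y$ on $M$ is the operator $y^\opp\in\End_{A,-}(M)$, and since $y=pyp$ one checks that it takes values in $Mp$ and factors as $\iota\circ R\circ\pi$, where $R\in\End_{A,-}(Mp)$ is right multiplication by $y$ on $Mp$, i.e.\ the operator through which $y\in pBp$ acts on the bimodule $Mp$. Therefore, inside $\End_{A,-}(Ae)\simeq(eAe)^\opp$,
\begin{align*}
\wch\gamma^j\circ R\circ\gamma_j=\wch\beta^j\circ(\iota\circ R\circ\pi)\circ\beta_j=\wch\beta^j\circ y^\opp\circ\beta_j,
\end{align*}
so, applying $(\cdot)^\opp$ and then $\psi$ and summing over $j\in J$, we obtain ${}^\psi\Tr_{Mp}(y)={}^\psi\Tr_M(y)$. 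As $y\in pBp$ was arbitrary, this gives ${}^\psi\Tr_{Mp}={}^\psi\Tr_M\big|_{pBp}$. I do not anticipate a real obstacle; the only point demanding care, and the one I would write out in full, is the opposite-algebra bookkeeping behind the factorization $y^\opp=\iota\circ R\circ\pi$ --- in particular the identities $py=yp=y$ for $y\in pBp$ and the fact that $R$ is simultaneously the corestriction to $Mp$ of right multiplication by $y$ on $M$ and the operator defining the right $pBp$-module structure of $Mp$. Everything else is formal.
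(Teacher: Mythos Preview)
Your proof is correct and follows essentially the same approach as the paper: both construct the right coordinate system of $Mp$ by post-composing $\beta_j$ with the projection $\xi\mapsto\xi p$ and restricting $\wch\beta^j$ to $Mp$, and then verify the trace identity term by term. The only difference is presentational---you package the computation via the factorization $y^\opp=\iota\circ R\circ\pi$, while the paper evaluates directly at $e$ using $(pyp)^\opp\circ p^\opp=(pyp)^\opp$.
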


\begin{proof}
Let $(\beta_\blt,\wch\beta^\blt)$ be a right coordinate system (with domain idempotent $e\in A$) as in Def. \ref{lb36}. Then one has a right coordinate system
\begin{gather*}
\gamma_j\in\Hom_{A,-}(Ae,Mp)\qquad \wch\gamma^j\in \Hom_{A,-}(Mp,Ae)\\
\gamma_j(ae)=\beta_j(ae)p\qquad \wch\gamma^j(\xi p)=\wch\beta^j(\xi p)
\end{gather*}
noting that $Mp\leq M$, and hence $\wch\gamma^j$ is simply the restriction of $\wch\beta^j$ to $Mp$. Using \eqref{eq5} one computes that for each $y\in B$,
\begin{align*}
&{}^\psi\Tr_{Mp}(pyp)=\sum_j \psi(\wch\gamma^j\circ (pyp)^\opp\circ \gamma_j(e))=\sum_j \psi(\wch\beta^j\circ (pyp)^\opp\circ \beta_j(e)p)\\
=&\sum_j \psi(\wch\beta^j\circ (pyp)^\opp\circ p^\opp\circ\beta_j(e))=\sum_j \psi(\wch\beta^j\circ (pyp)^\opp\circ\beta_j(e))={}^\psi\Tr_M(pyp)
\end{align*}
\end{proof}

\section{Equivalence of left and right pseudotraces}

Let $A,B$ be algebras where $B$ is unital and finite dimensional.

\subsection{Preliminary discussion}

In this subsection, assume that $A$ is AUF. We shall consider $M\in\Coh(A)$ such that the left and the right pseudotrace constructions are both available to the $A$-$(\End_{A,-}(M)^\opp)$ bimodule $M$. By Rem. \ref{lb38}, $M$ needs to be assumed  $A$-projective. One also needs $M$ to be $\End_{A,-}(M)^\opp$-projective. In fact, these two conditions are precisely what ensure that both left and right pseudotraces can be defined.

\begin{pp}
Let $M$ be an $A$-$B$ bimodule. Assume that $M$ is $A$-coherent. Then the following are equivalent.
\begin{enumerate}[label=(\arabic*)]
\item $M$ has a left coordinate system.
\item $M$ is $B$-projective.
\end{enumerate}
\end{pp}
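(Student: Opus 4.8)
The implication (1)$\Rightarrow$(2) is immediate: a left coordinate system satisfies in particular condition (a) of Def.~\ref{lb41}, so by Rem.~\ref{lb20} the right $B$-module $M$ is projective. The content is (2)$\Rightarrow$(1), where the task is to produce a family satisfying not just (a) but also the ``$A$-locally finite'' condition (b). The plan is to build one by hand from the idempotents witnessing that $A$ is AUF, and the point is that $A$-coherence forces $M$ to decompose into finite-dimensional pieces along these idempotents.

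The first step is to decompose $M$ along the mutually orthogonal idempotents $(e_k)_{k\in\fk I}$ of Def.~\ref{lb4}. Since $M$ is $A$-coherent it is $A$-quasicoherent (Def.~\ref{lb18}), so each $\xi\in M$ satisfies $\xi=f\xi$ for some idempotent $f\in A$; writing $f=\sum_{a,b\in I_0}e_afe_b$ with $I_0\subseteq\fk I$ finite (possible since $A=\bigoplus_{a,b}e_aAe_b$) gives $\xi\in\sum_{k\in I_0}e_kM$, while applying $e_j$ to any finite relation $\sum_k m_k=0$ with $m_k\in e_kM$ yields directness. Thus $M=\bigoplus_{k\in\fk I}e_kM$, each $e_kM$ being a right $B$-submodule since the right $B$-action commutes with left multiplication by $e_k$. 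Each $e_kM$ is finite-dimensional over $\Cbb$ by Rem.~\ref{lb56} (applied to $x=e_k$) and is a right-$B$-module direct summand of $M$, hence $B$-projective by hypothesis~(2). Being $B$-projective and finite-dimensional, $e_kM$ carries a \emph{finite} family $\alpha_{k,1},\dots,\alpha_{k,n_k}\in\Hom_{-,B}(B,e_kM)$, $\wch\alpha^{k,1},\dots,\wch\alpha^{k,n_k}\in\Hom_{-,B}(e_kM,B)$ with $\sum_{j}\alpha_{k,j}\circ\wch\alpha^{k,j}=\id_{e_kM}$ (the usual dual basis; the shape of the data is that of Rem.~\ref{lb20}(a), and finiteness of the index set follows from finite-dimensionality of $e_kM$).

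The second step is to assemble these. Let $\iota_k:e_kM\hookrightarrow M$ and $\pi_k:M\twoheadrightarrow e_kM$ be the inclusion and projection of the decomposition, so that $\pi_k(\eta)=e_k\eta$ and $\sum_k\iota_k\circ\pi_k=\id_M$ as a pointwise-finite sum. Set $\beta_{k,j}=\iota_k\circ\alpha_{k,j}$ and $\wch\beta^{k,j}=\wch\alpha^{k,j}\circ\pi_k$. Then $\sum_{k,j}\beta_{k,j}\circ\wch\beta^{k,j}=\id_M$, and for each $\xi$ only finitely many $\wch\beta^{k,j}(\xi)$ are nonzero; this is condition~(a). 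For condition~(b), given $x\in A$ write $x=\sum_{a,b\in I_1}e_axe_b$ with $I_1\subseteq\fk I$ finite; then $xe_k=0$ and $e_kx=0$ for all $k\notin I_1$. Since $\beta_{k,j}$ has image in $e_kM$ (so $x\circ\beta_{k,j}$ factors through $xe_k$) and $\wch\beta^{k,j}(x\xi)=\wch\alpha^{k,j}(e_kx\xi)$, it follows that $x\circ\beta_{k,j}=0$ and $\wch\beta^{k,j}\circ x=0$ whenever $k\notin I_1$, hence for all but finitely many pairs $(k,j)$. Therefore $(\beta_{k,j},\wch\beta^{k,j})$ is a left coordinate system of $M$, proving (2)$\Rightarrow$(1).

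I do not anticipate a deep obstacle: once one sees the finite decomposition $M=\bigoplus_k e_kM$ into finite-dimensional $B$-projective pieces, condition~(b) is automatic from the orthogonality of the $e_k$. The steps requiring a little care are the verification that each $e_kM$ is $B$-projective and hence admits a finite dual basis over $B$ (a summand of a projective is projective, and a summand of a free module whose image is finite-dimensional sits inside a finite free submodule), and the bookkeeping showing that the assembled family is at once summable in the sense of~(a) and $A$-locally finite in the sense of~(b).
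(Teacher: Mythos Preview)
Your proof is correct and follows essentially the same route as the paper: both decompose $M=\bigoplus_{k\in\fk I}e_kM$ along the orthogonal idempotents of Def.~\ref{lb4}, use Rem.~\ref{lb56} to see each $e_kM$ is finite-dimensional (hence carries a finite dual basis over $B$ once projectivity is inherited from $M$), and then splice these finite dual bases together via the inclusions and projections $\iota_k,\pi_k$. Your write-up is more explicit in verifying conditions (a) and (b) of Def.~\ref{lb41}, but the construction is identical to the paper's $\gamma_{i,\blt}(b)=\alpha_{i,\blt}(b)$, $\wch\gamma^{i,\blt}(\xi)=\wch\alpha^{i,\blt}(e_i\xi)$.
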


Although this proposition will not be used in the current note, we include it here as it may be of use in the future.

\begin{proof}
(1)$\Rightarrow$(2): See Rem. \ref{lb20}.

(2)$\Rightarrow$(1): Let $(e_i)_{i\in \fk I}$ be as in Def. \ref{lb4}. By Rem. \ref{lb56}, each $e_iM$ is finite-dimensional. Therefore, the right $B$-module $e_iM$ has an epimorphism from $B^{\oplus n}$ which splits because $M$ is $B$-projective (and hence $e_iM$ is projective since $M=\bigoplus_{i\in\fk I}e_iM$). Therefore, for each $i\in\fk I$, there is a finite left coordinate system $\alpha_{i,\blt}\in\Hom_{-,B}(B,e_iM)$ and $\wch\alpha^{i,\blt}\in\Hom_{-,B}(e_iM,B)$. Let
\begin{gather*}
\gamma_{i,\blt}\in\Hom_{-,B}(B,M)\qquad \wch\gamma^{i,\blt}\in\Hom_{-,B}(M,B)\\
\gamma_{i,\blt}(b)=\alpha_{i,\blt}(b)\qquad\wch\gamma^{i,\blt}(\xi)=\wch\alpha^{i,\blt}(e_i\xi)
\end{gather*}
Then one checks easily that $(\gamma_{i,\blt},\wch\gamma^{i,\blt})_{i\in\fk I}$ is a left coordinate system of $M$.   
\end{proof}

\subsection{Calculation of some left pseudotraces}

In this subsection, $A$ is not assumed to be AUF. Let $M$ be an $A$-$B$ bimodule. 

The goal of this subsection is to prepare for the proof of the main Thm. \ref{lb44}.  The following theorem is dual to Prop. \ref{lb40}. 

\begin{thm}\label{lb45}
Assume that $M$ has a left coordinate system. Let $p\in B$ be a generating idempotent. Then the following are true.
\begin{enumerate}
\item The $A$-$(pBp)$ bimodule $Mp$ has a left coordinate system.
\item Let $\phi\in\SLF(B)$. Then on $A$, the pseudotrace associated to $\phi|_{pBp}$ and $Mp$ is equal to the pseudotrace associated to $\phi$ and $M$. Namely,
\begin{align}\label{eq6}
\Tr_{Mp}^{\phi|_{pBp}}=\Tr^\phi_M
\end{align}
\end{enumerate}
\end{thm}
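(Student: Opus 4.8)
The plan is to mimic the duality with Proposition \ref{lb40}, which handled the analogous statement for right coordinate systems and sub-idempotents; here we must do the corresponding thing for left coordinate systems, but with the extra subtlety that $p$ is only assumed \emph{generating} rather than arbitrary. Start from a left coordinate system $(\alpha_i,\wch\alpha^i)_{i\in I}$ of $M$ over $B$ as in Def. \ref{lb41}, so $\alpha_i\in\Hom_{-,B}(B,M)$, $\wch\alpha^i\in\Hom_{-,B}(M,B)$, with $\sum_i\alpha_i\circ\wch\alpha^i=\id_M$ and the local finiteness condition (b). We want a left coordinate system of the $A$-$(pBp)$ bimodule $Mp$.

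The key idea for part (1): because $p$ is a generating idempotent of the unital algebra $B$, Prop. \ref{lb16} (together with the analogue of the construction in Thm. \ref{lb17}) tells us that $B$ itself, as a right $B$-module, is a direct summand of $(pB)^{\oplus m}$ for some $m$; equivalently there are partial isometries realizing $1_B=\sum$ (pieces dominated by $p$). Concretely, write $1_B=q_1+\dots+q_n$ as an orthogonal primitive decomposition; each $q_k$ is isomorphic to a sub-idempotent of $p$, so there exist $u_k\in q_k B p$, $v_k\in pBq_k$ with $v_k u_k\le p$ and $u_k v_k=q_k$. Then $\sum_k u_k v_k=1_B$, $u_k=q_k u_k p$, $v_k=p v_k q_k$. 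Now define the new system indexed by pairs $(i,k)$:
\begin{gather*}
\beta_{i,k}\in\Hom_{-,pBp}(pBp,Mp)\qquad \wch\beta^{i,k}\in\Hom_{-,pBp}(Mp,pBp)\\
\beta_{i,k}(x)=\alpha_i(u_k x)\qquad \wch\beta^{i,k}(\xi)=p\,v_k\,\wch\alpha^i(\xi)\,p
\end{gather*}
for $x\in pBp$, $\xi\in Mp$. One checks these are right $pBp$-module maps, that condition (a) holds because
$\sum_{i,k}\beta_{i,k}\circ\wch\beta^{i,k}(\xi)=\sum_{i,k}\alpha_i(u_k p v_k\wch\alpha^i(\xi)p)=\sum_i\alpha_i\big((\textstyle\sum_k u_k v_k)\wch\alpha^i(\xi)\big)p=\sum_i\alpha_i(\wch\alpha^i(\xi))p=\xi p=\xi$
(using $\xi=\xi p$ since $\xi\in Mp$, and $\wch\alpha^i(\xi)=\wch\alpha^i(\xi)$, with a minor check that $p\wch\alpha^i(\xi)$ versus $\wch\alpha^i(\xi)$ agree after applying $\alpha_i$ and multiplying by $u_k$ on the left — since $u_k\in q_kBp$, $u_k=u_k p$, so $u_k p v_k\wch\alpha^i(\xi)p=u_k v_k\wch\alpha^i(\xi)p$ only if the interior $p$ is absorbed, which it is because $v_k=pv_kq_k$ gives $pv_k=v_k$). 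Condition (b) of Def. \ref{lb41} for $(\beta_{i,k},\wch\beta^{i,k})$ follows from condition (b) for $(\alpha_i,\wch\alpha^i)$: for fixed $a\in A$, only finitely many $i$ have $a\circ\alpha_i\ne 0$ or $\wch\alpha^i\circ a\ne 0$, and $k$ ranges over a finite set, so only finitely many pairs survive. This proves (1).

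For part (2): compute $\Tr^{\phi|_{pBp}}_{Mp}(a)$ for $a\in A$ directly from \eqref{eq7}. We get
\begin{align*}
\Tr^{\phi|_{pBp}}_{Mp}(a)=\sum_{i,k}\phi\big(\wch\beta^{i,k}\circ a\circ\beta_{i,k}(1_{pBp})\big)=\sum_{i,k}\phi\big(p\,v_k\,\wch\alpha^i\big(a\cdot\alpha_i(u_k p)\big)\,p\big).
\end{align*}
Since $\alpha_i$ is a right $B$-map, $\alpha_i(u_k p)=\alpha_i(1_B)\cdot u_k p=\alpha_i(1_B)u_k$ (as $u_k=u_kp$), and $\wch\alpha^i$ is a right $B$-map so $\wch\alpha^i(a\cdot\alpha_i(1_B)u_k)=\wch\alpha^i(a\cdot\alpha_i(1_B))\cdot u_k$; write $w_i:=\wch\alpha^i\circ a\circ\alpha_i(1_B)\in B$, so the summand is $\phi(p v_k w_i u_k p)$. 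Now use that $\phi\in\SLF(B)$ to cyclically rotate: $\phi(p v_k w_i u_k p)=\phi(v_k w_i u_k p\cdot p)=\phi(v_k w_i u_k)$ (here $pv_k=v_k$ and $u_k p=u_k$ kill the outer $p$'s harmlessly even before using symmetry), then $\phi(v_k w_i u_k)=\phi(u_k v_k w_i)$ by symmetry. Summing over $k$: $\sum_k\phi(u_k v_k w_i)=\phi\big((\sum_k u_k v_k)w_i\big)=\phi(1_B w_i)=\phi(w_i)$. Hence $\Tr^{\phi|_{pBp}}_{Mp}(a)=\sum_i\phi(w_i)=\sum_i\phi\big(\wch\alpha^i\circ a\circ\alpha_i(1_B)\big)=\Tr^\phi_M(a)$, which is \eqref{eq6}.

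The main obstacle I anticipate is purely bookkeeping: getting the placement of $p$'s right so that $\beta_{i,k},\wch\beta^{i,k}$ genuinely land in and act on $Mp$ and $pBp$ (not just $M$ and $B$) and are right $pBp$-linear, and confirming that the local-finiteness condition (b) of Def. \ref{lb41} survives the reindexing by $I\times\{1,\dots,n\}$ — the latter is immediate since $n$ is finite. The one genuine input, as opposed to formal manipulation, is the existence of the partial isometries $u_k,v_k$ with $v_ku_k\le p$, which is exactly what ``$p$ generating'' buys us via Prop. \ref{lb16}; everything else is the same cyclic-trace juggling already used in Lem. \ref{lb9}, Prop. \ref{lb37}, and Prop. \ref{lb40}.
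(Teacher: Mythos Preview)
Your proposal is correct and follows essentially the same approach as the paper's own proof: both construct the new left coordinate system by composing the given $(\alpha_i,\wch\alpha^i)$ with partial isometries $u_k,v_k$ obtained from a primitive orthogonal decomposition $1_B=\sum_k q_k$ and the fact that $p$ is generating (via Prop.~\ref{lb16}), and both establish \eqref{eq6} by the same cyclic manipulation $\phi(v_k w_i u_k)=\phi(u_k v_k w_i)=\phi(q_k w_i)$ followed by summing $\sum_k q_k=1_B$. The only cosmetic difference is that your $\wch\beta^{i,k}(\xi)=p\,v_k\,\wch\alpha^i(\xi)\,p$ carries redundant outer $p$'s (since $pv_k=v_k$ and $\wch\alpha^i(\xi)p=\wch\alpha^i(\xi)$ for $\xi\in Mp$), whereas the paper writes simply $v_k\cdot\wch\alpha^i(\xi p)$; this is harmless.
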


In this theorem, we do not require that $A$ is AUF.

\begin{proof}
Choose a left coordinate system for $M$:
\begin{align*}
\alpha_i\in\Hom_{-,B}(B,M)\qquad \wch\alpha^i\in \Hom_{-,B}(M,B)\qquad i\in\fk I
\end{align*}
Since $p$ is generating, similar to the proof of Thm. \ref{lb17}, we can find finitely many elements $u_k,v_k$ in $B$ such that
\begin{gather*}
v_ku_k=p_k\qquad u_kv_k=q_k\\
u_k\in q_kBp_k\qquad v_k\in p_kBq_k
\end{gather*}
where $p_k,q_k\in B$ are idempotents, $1_B=\sum_k q_k$ is a primitive orthogonal decomposition of $1_B$, and $p_k\leq p$ for each $k$. \footnote{So $p_k,q_k$ are similar to $\eps_{k_i},e_i$ in the proof of Thm. \ref{lb17}.} Let
\begin{gather*}
\theta_{i,k}\in \Hom_{-,pBp}(pBp,Mp)\qquad \wch\theta^{i,k}\in\Hom_{-,pBp}(Mp,pBp)\\
\theta_{i,k}(pyp)=\alpha_i(u_k\cdot pyp)\qquad \wch\theta^{i,k}(\xi p)=v_k\cdot \wch\alpha^i(\xi p)
\end{gather*}
noting that $\alpha_i(u_k\cdot pyp)=\alpha_i(u_k)pyp\in Mp$ and $v_k\cdot \wch\alpha^i(\xi p)=v_k\cdot \wch\alpha^i(\xi)p\in p_kBp\subset pBp$. 

For each $\xi\in M$, note that if $\wch\alpha^i(\xi)=0$, then $\wch\theta^{i,k}(\xi p)=v_k\wch\alpha^i(\xi)p=0$. Therefore, $\wch\theta^{i,k}(\xi p)=0$ for all but finitely many $i$ and $k$. Moreover, we compute
\begin{align*}
&\sum_{i,k}\theta_{i,k}\circ\wch\theta^{i,k}(\xi p)=\sum_{i,k}\theta_{i,k}(v_k\wch\alpha^i(\xi p))=\sum_{i,k}\alpha_i(u_k v_k\wch\alpha^i(\xi p))\\
=&\sum_{i,k}\alpha_i(q_k\wch\alpha^i(\xi p))=\sum_{i}\alpha_i\circ\wch\alpha^i(\xi p)=\xi p
\end{align*}
where all the sums are finite. This proves that $(\theta,\wch\theta)$ satisfies Def. \ref{lb41}-(a). It is easy to check Def. \ref{lb41}-(b). So we have proved that $(\theta,\wch\theta)$ is a left coordinate system of $Mp$.

It remains to check \eqref{eq6}. Choose any $x\in A$. By \eqref{eq7} and the fact that $1_{pBp}=p$,
\begin{align*}
&\Tr_{Mp}^{\phi|_{pBp}}(x)=\sum_{i,k}\phi(\wch\theta^{i,k}\circ x\circ \theta_{i,k}(p))=\sum_{i,k}\phi(\wch\theta^{i,k}\circ x\circ \alpha_i(u_kp))\\
=&\sum_{i,k}\phi(\wch\theta^{i,k}\circ x\circ \alpha_i(u_k))=\sum_{i,k}\phi(v_k\cdot\wch\alpha^i(x\circ\alpha_i(u_k)))
\end{align*}
Since $\wch\alpha^i,x,\alpha_i$ commute with the right multiplication by $v_k$, and since $\phi$ is symmetric, 
\begin{align*}
&\Tr_{Mp}^{\phi|_{pBp}}(x)=\sum_{i,k}\phi(\wch\alpha^i(x\circ\alpha_i(u_k))v_k)=\sum_{i,k}\phi(\wch\alpha^i(x\circ\alpha_i(u_kv_k)))\\
=&\sum_{i}\phi(\wch\alpha^i(x\circ\alpha_i(1_B)))=\Tr_M^\phi(x)
\end{align*}
This finishes the proof of \eqref{eq6}.
\end{proof}

\begin{co}\label{lb46}
	Assume that $M$ has a left coordinate system. Let $n\in\Zbb_+$. Let $\wtd B=B\otimes\Cbb^{n\times n}$. Then the $A$-$\wtd B$ bimodule $M^{\oplus n}\simeq M\otimes\Cbb^{1,n}$ has a left coordinate system. Moreover, for each $\phi\in\SLF(B)$, we have
\begin{align}\label{eq8}
\Tr^{\phi\otimes\trc}_{M^{\oplus n}}=\Tr^\phi_M
\end{align}
as pseudotraces on $A$ associated to $\phi\otimes\trc\in\SLF(\wtd B)$ and $\phi\in\SLF(B)$, respectively.
\end{co}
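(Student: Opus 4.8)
The plan is to deduce \eqref{eq8} from Thm.~\ref{lb45} by realizing $B$ as a full corner of $\wtd B=B\otimes\Cbb^{n\times n}$. Write $E_{kl}\in\Cbb^{n\times n}$ ($1\le k,l\le n$) for the matrix units, $e_l\in\Cbb^{1,n}$ for the standard row vectors, and set $p=1_B\otimes E_{11}\in\wtd B$. Identify $M^{\oplus n}\simeq M\otimes\Cbb^{1,n}$, writing elements as rows $\xi=(\xi_1,\dots,\xi_n)$ with $\wtd B$ acting by $\xi\cdot(b\otimes S)=(\sum_m\xi_m b\,S_{ml})_l$. Since the existence of a left coordinate system on $M$ forces $M=M1_B$ (apply condition (a) of Def.~\ref{lb41} and use $1_B^2=1_B$), one computes $(M^{\oplus n})p=M\otimes\Cbb e_1$, which is canonically the $A$-$B$ bimodule $M$ once $p\wtd Bp=B\otimes E_{11}$ is identified with $B$ (here $B$ unital is used). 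Thus, granting that $M^{\oplus n}$ has a left coordinate system over $\wtd B$ and that $p$ is a generating idempotent of $\wtd B$, Thm.~\ref{lb45}(2) applied with $(M^{\oplus n},\wtd B,p)$ in place of $(M,B,p)$ yields $\Tr^{\phi'|_{p\wtd Bp}}_{(M^{\oplus n})p}=\Tr^{\phi'}_{M^{\oplus n}}$ for all $\phi'\in\SLF(\wtd B)$; taking $\phi'=\phi\otimes\trc$ and noting $\trc(E_{11})=1$, so that $\phi'|_{p\wtd Bp}$ corresponds to $\phi$ and $(M^{\oplus n})p$ to $M$, this is exactly \eqref{eq8}.

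It remains to supply the two ingredients. For the left coordinate system of $M^{\oplus n}$ (which is also the first assertion of the corollary), starting from a left coordinate system $(\alpha_i,\wch\alpha^i)_{i\in\fk I}$ of $M$ I would set, for $i\in\fk I$, $A_i=\alpha_i(1_B)\otimes e_1\in M^{\oplus n}$ (regarded as the right-$\wtd B$-linear map $\wtd B\to M^{\oplus n}$, $\beta\mapsto A_i\beta$) and $\wch A^i(\xi)=\sum_{l=1}^n\wch\alpha^i(\xi_l)\otimes E_{1l}\in\wtd B$. The checks are routine matrix-unit bookkeeping: right-$\wtd B$-linearity of $\wch A^i$ reduces to that of $\wch\alpha^i$ via $E_{1m}S=\sum_l S_{ml}E_{1l}$; for fixed $\xi$ one has $\wch A^i(\xi)=0$ whenever every $\wch\alpha^i(\xi_l)$ vanishes, hence for all but finitely many $i$; for fixed $x\in A$ one has $x\circ A_i=0$ (resp. $\wch A^i\circ x=0$) whenever $x\circ\alpha_i=0$ (resp. $\wch\alpha^i\circ x=0$), hence for all but finitely many $i$; and $\sum_i A_i\circ\wch A^i(\xi)=\sum_{i,l}\alpha_i(1_B)\wch\alpha^i(\xi_l)\otimes(e_1E_{1l})=\sum_l(\sum_i\alpha_i\circ\wch\alpha^i(\xi_l))\otimes e_l=\xi$, using $e_1E_{1l}=e_l$. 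For the second ingredient, decomposing $1_B=q_1+\cdots+q_r$ into primitive idempotents of $B$ gives primitive orthogonal decompositions $p=\sum_j q_j\otimes E_{11}$ and $1_{\wtd B}=\sum_{j,k}q_j\otimes E_{kk}$, with $q_j\otimes E_{kk}$ equivalent to $q_j\otimes E_{11}$ via $(q_j\otimes E_{k1},q_j\otimes E_{1k})$; since every primitive idempotent of the finite-dimensional algebra $\wtd B$ is isomorphic to one of the $q_j\otimes E_{kk}$, Prop.~\ref{lb16} shows $p$ is generating.

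I do not anticipate a genuine obstacle: the entire content is the explicit coordinate system above and the identification $(M^{\oplus n})(1_B\otimes E_{11})\cong M$. If one prefers to bypass Thm.~\ref{lb45}, the pseudotrace identity can be read off directly from $(A_i,\wch A^i)$ via \eqref{eq7}: for $x\in A$ one has $x\circ A_i(1_{\wtd B})=(x\alpha_i(1_B))\otimes e_1$, hence $\wch A^i\circ x\circ A_i(1_{\wtd B})=\wch\alpha^i(x\alpha_i(1_B))\otimes E_{11}$, so $(\phi\otimes\trc)$ of it equals $\phi(\wch\alpha^i(x\alpha_i(1_B)))$, and summing over $i$ gives $\Tr^{\phi\otimes\trc}_{M^{\oplus n}}(x)=\Tr^\phi_M(x)$. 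The only place requiring a moment's care is observing that a left coordinate system already makes $M$ unital over $B$, so that $(M^{\oplus n})p$ is honestly $M$ rather than a proper submodule.
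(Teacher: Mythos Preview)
Your proof is correct and follows essentially the same route as the paper: you build the same explicit left coordinate system on $M^{\oplus n}$ (your $(A_i,\wch A^i)$ is precisely the paper's $(\gamma_i,\wch\gamma^i)$) and then apply Thm.~\ref{lb45} with $p=1_B\otimes E_{11}$. The paper simply asserts that this $p$ is generating, whereas you supply the verification and also sketch the clean direct computation that bypasses Thm.~\ref{lb45} altogether.
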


Recall that $\trc\in\SLF(\Cbb^{n\times n})$ is the standard trace on the $n\times n$ matrix algebra.

\begin{proof}
Choose a left coordinate system
\begin{gather*}
\alpha_i\in\Hom_{-,B}(B,M)\qquad \wch\alpha^i\in \Hom_{-,B}(M,B)
\end{gather*}
of $M$. Define
\begin{gather*}
\gamma_i\in\Hom_{-,\wtd B}(\wtd B,M^{\oplus n})\qquad \wch\gamma^i\in\Hom_{-,\wtd B}(M^{\oplus n},\wtd B)
\end{gather*}
such that
\begin{gather*}
\gamma_i
\begin{bmatrix}
y_{1,1}&\cdots&y_{1,n}\\
&\vdots&\\
y_{n,1}&\cdots&y_{n,n}
\end{bmatrix}
=[\alpha_i(1_B),0,\dots,0]\begin{bmatrix}
y_{1,1}&\cdots&y_{1,n}\\
&\vdots&\\
y_{n,1}&\cdots&y_{n,n}
\end{bmatrix}=[\alpha_i(y_{1,1}),\dots,\alpha_i(y_{1,n})]
\\
\wch\gamma^i[\xi_1,\dots,\xi_n]
=
\begin{bmatrix}
\wch\alpha^i(\xi_1)&\cdots&\wch\alpha^i(\xi_n)\\
0&\cdots&0\\
\vdots&\vdots&\vdots\\
0&\cdots&0
\end{bmatrix}
\end{gather*}
One checks easily that this is a left coordinate system of $M^{\oplus n}$. Now \eqref{eq8} follows by applying Thm. \ref{lb45} to the $A$-$\wtd B$ bimodule $M^{\oplus n}$ and the generating projection $p\in\wtd B$, where $p$ is the matrix whose $(1,1)$-entry is $1$ and other entries are $0$. 
\end{proof}

\subsection{The main theorem}

Assume that $A$ is strongly AUF (cf. Cor. \ref{lb33}) so that $A$ has a projective generator (cf. Prop. \ref{lb50}). The following generalization of Thm. \ref{lb42} is the main theorem of this note.

\begin{thm}\label{lb44}
Assume that $M\in\Coh(A)$ is a projective generator. Assume that $B=\End_{A,-}(M)^\opp$ so that $M$ is an $A$-$B$ bimodule. Then $M$ has left and right coordinate systems. Moreover, we have a linear isomorphism
\begin{subequations}\label{eq9}
\begin{align}\label{eq9a}
\SLF(A)\xrightarrow{\simeq}\SLF(B)\qquad \psi\mapsto {^\psi}\Tr
\end{align}
whose inverse map is
\begin{align}\label{eq9b}
\SLF(B)\xrightarrow{\simeq}\SLF(A)\qquad \phi\mapsto \Tr^\phi
\end{align}
\end{subequations}
\end{thm}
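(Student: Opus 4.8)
The plan is to reduce Theorem \ref{lb44} to the already-established case where the projective generator is of the form $Ae$ (Thm. \ref{lb42}), using the structural description of projective generators from Cor. \ref{lb43} together with the two ``functoriality'' results for left and right pseudotraces under passing to a generating-idempotent summand (Thm. \ref{lb45} and Prop. \ref{lb40}). First I would fix a generating idempotent $e\in A$ (which exists since $A$ is strongly AUF) and invoke Cor. \ref{lb43} to write $M\simeq (Ae)^{\oplus n}p$ for some $n\in\Zbb_+$ and some generating idempotent $p$ of $\wtd B:=\End_{A,-}((Ae)^{\oplus n})^\opp$; note $\wtd B\simeq (eAe)\otimes\Cbb^{n\times n}$, and $B=\End_{A,-}(M)^\opp\simeq p\wtd Bp$ by Prop. \ref{lb30}. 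Since $M$ is projective and coherent it has a right coordinate system by Rem. \ref{lb38}; and $M$, being a direct summand of the $\wtd B$-projective module $(Ae)^{\oplus n}$ (which is $\wtd B$-projective because $Ae$ is $eAe$-projective by Thm. \ref{lb17} and matrix-amplification preserves projectivity), is $B$-projective, hence has a left coordinate system by Rem. \ref{lb20}. So both pseudotrace maps in \eqref{eq9} are defined.

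Next I would verify that \eqref{eq9a} and \eqref{eq9b} are mutually inverse by composing the identifications at each stage. For the left pseudotrace, Thm. \ref{lb42} gives that $\psi\mapsto \Tr^{\psi}$ associated to the bimodule $Ae$ is a bijection $\SLF(eAe)\xrightarrow{\simeq}\SLF(A)$ with inverse restriction; Cor. \ref{lb46} upgrades this to the bimodule $(Ae)^{\oplus n}$ over $\wtd B$ via $\Tr^{\phi\otimes\trc}_{(Ae)^{\oplus n}}=\Tr^\phi_{Ae}$, so that $\SLF(\wtd B)\to\SLF(A)$ (on the subspace $\SLF(eAe)\otimes\trc$, which is all of $\SLF(\wtd B)$ since $\SLF(\Cbb^{n\times n})=\Cbb\cdot\trc$) is again a bijection; finally Thm. \ref{lb45} shows $\Tr^{\Phi|_{p\wtd Bp}}_{(Ae)^{\oplus n}p}=\Tr^{\Phi}_{(Ae)^{\oplus n}}$ for the generating idempotent $p$, which identifies the left pseudotrace built from $M$ and $B=p\wtd Bp$ with the one built from $(Ae)^{\oplus n}$ and $\wtd B$. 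Dually, for the right pseudotrace, the $M=Ae$ case is the trivial Example where ${}^\psi\Tr=\psi|_{eAe}$, the matrix amplification is Exp. \ref{lb47} giving ${}^\psi\Tr=\psi|_{eAe}\otimes\trc$ for $(Ae)^{\oplus n}$, and Prop. \ref{lb40} gives ${}^\psi\Tr_{Mp}={}^\psi\Tr_M\big|_{p\wtd Bp}$ for passing from $(Ae)^{\oplus n}$ to $M$. Chaining these, the composite $\SLF(A)\xrightarrow{{}^\bullet\Tr}\SLF(B)\xrightarrow{\Tr^\bullet}\SLF(A)$ equals the composite of the corresponding maps for the bimodule $(Ae)^{\oplus n}$ over $\wtd B$, which in turn (by Cor. \ref{lb46} and Exp. \ref{lb47}) equals the composite for $Ae$ over $eAe$, namely $\psi\mapsto \psi|_{eAe}\mapsto \Tr^{\psi|_{eAe}}_{Ae}=\psi$ by Thm. \ref{lb42}. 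Running the argument in the other order gives $\phi\mapsto {}^{\Tr^\phi}\!\Tr=\phi$ similarly, so \eqref{eq9a} and \eqref{eq9b} are inverse bijections.

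The step I expect to be the main obstacle is making the identification of the two sides genuinely \emph{canonical} rather than merely abstract: the isomorphism $M\simeq (Ae)^{\oplus n}p$ involves a choice, and one must check that the pseudotrace maps do not depend on this choice. This is handled because each pseudotrace is, by the independence-of-coordinate-system lemmas (Lem. \ref{lb9} and its right-handed analogue), intrinsic to the bimodule $M$ with its $A$- and $B$-actions — so once the bijectivity is proved for one presentation of $M$ it holds for $M$ itself. A secondary technical point is bookkeeping the identification $B\simeq p\wtd Bp$ and the compatibility $\SLF(B)\simeq\SLF(p\wtd Bp)$ with the $\trc$-twist on the matrix factor; here one uses that $\SLF(\Cbb^{n\times n})$ is one-dimensional, spanned by $\trc$, so every SLF on $\wtd B$ factors as $\phi\otimes\trc$ with $\phi\in\SLF(eAe)$, and restriction to $p\wtd Bp$ together with Thm. \ref{lb45} keeps everything consistent. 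Once these identifications are in place, the theorem follows by transport of structure from Thm. \ref{lb42}.
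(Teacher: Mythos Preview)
Your approach is essentially the paper's: reduce to $M=(Ae)^{\oplus n}p$ via Cor.~\ref{lb43}, identify $B=p\wtd Bp$ with $\wtd B=eAe\otimes\Cbb^{n\times n}$, and then chain Thm.~\ref{lb42}, Cor.~\ref{lb46}/Exp.~\ref{lb47}, and Thm.~\ref{lb45}/Prop.~\ref{lb40} to compute the composite. Two small differences are worth noting. First, the paper only verifies \emph{one} composition, namely that $\psi\mapsto{}^\psi\Tr\mapsto\Tr^{({}^\psi\Tr)}$ is the identity on $\SLF(A)$, and then closes the argument with a dimension count: it shows $\dim\SLF(A)=\dim\SLF(eAe)=\dim\SLF(\wtd B)=\dim\SLF(B)<\infty$ (using Thm.~\ref{lb42} twice and the isomorphism $\omega\mapsto\omega\otimes\trc$), so one-sided inverse implies two-sided inverse. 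Your ``running the argument in the other order'' would also work, but it is not quite ``similarly'': for $\phi\in\SLF(B)$ you first need to lift $\phi$ to some $\Phi\in\SLF(\wtd B)$ with $\Phi|_{p\wtd Bp}=\phi$, which is precisely the surjectivity half of Thm.~\ref{lb42} applied to the finite-dimensional algebra $\wtd B$ and the generating idempotent $p$ --- so make that invocation explicit. Second, your citation for the existence of a left coordinate system on $M$ is slightly off: Rem.~\ref{lb20} only yields condition~(a) of Def.~\ref{lb41} (i.e., $B$-projectivity), not the finiteness condition~(b) needed to define $\Tr^\phi$ on all of $A$. The paper instead obtains the full left coordinate system on $M$ directly from Thm.~\ref{lb17} (for $Ae$), Cor.~\ref{lb46} (for $(Ae)^{\oplus n}$), and then Thm.~\ref{lb45} Part~1 (for $M=Gp$); you should route through those statements rather than Rem.~\ref{lb20}.
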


Of course, both pseudotraces are associated to $M$; we have suppressed the subscript $M$.

\begin{proof}
Note that $\dim B<+\infty$ by Prop. \ref{lb29}. So $\dim\SLF(B)<+\infty$. Since $M\in\Coh(A)$ is $A$-projective, by Rem. \ref{lb38}, $M$ has a right coordinate system.  By Cor. \ref{lb43}, we may assume that $M=G\cdot p$ where 
\begin{itemize}
\item $G=(Ae)^{\oplus n}$ for some $n\in\Zbb_+$ and a generating idempotent $e\in A$.
\item $M=Gp$ where $p$ is a generating idempotent of $\wtd B=\End_{A,-}(G)^\opp=eAe\otimes\Cbb^{n\times n}$.
\item $B=p\wtd Bp$ (by Prop. \ref{lb30}).
\end{itemize}
By Thm. \ref{lb17} and Cor. \ref{lb46}, $G$ has a left coordinate system. Therefore, by Thm. \ref{lb45}, $M$ has a left coordinate system.

By Thm. \ref{lb42}, we have $\dim\SLF(A)=\dim\SLF(eAe)$. Clearly we have a linear isomorphism
\begin{align*}
\SLF(eAe)\xrightarrow{\simeq}\SLF(eAe\otimes\Cbb^{n\times n})\qquad \omega\mapsto\omega\otimes\trc
\end{align*}
So $\dim\SLF(eAe)=\dim\SLF(\wtd B)$. By Thm. \ref{lb42}, we have $\dim\SLF(\wtd B)=\dim\SLF(B)$. This proves $\dim\SLF(A)=\dim\SLF(B)<+\infty$.

Choose any $\psi\in\SLF(A)$. By Exp. \ref{lb47}, ${}^\psi\Tr_G:\wtd B\rightarrow\Cbb$ equals $\psi|_{eAe}\otimes\trc$. By Prop. \ref{lb40}, on $B=p(eAe\otimes\Cbb^{n\times n})p$ we have
\begin{align*}
{}^\psi\Tr_M=(\psi|_{eAe}\otimes\trc)\big|_B\qquad=:\phi
\end{align*}
Now $\phi\in\SLF(B)$. By Thm. \ref{lb45} and Cor. \ref{lb46},
\begin{align*}
\Tr_M^\phi=\Tr_{Gp}^{(\psi|_{eAe}\otimes\trc)|_B}=\Tr_G^{\psi|_{eAe}\otimes\trc}=\Tr_{Ae}^{\psi|_{eAe}}
\end{align*}
By Thm. \ref{lb42}, $\Tr_{Ae}^{\psi|_{eAe}}=\psi$. So $\Tr_M^\phi=\psi$. We have thus proved that $\eqref{eq9b}\circ\eqref{eq9a}$ is the identity map on $\SLF(A)$. This finishes the proof.
\end{proof}

\section{Equivalence of non-degeneracy of left and right pseudotraces}

\begin{df}
Let $A$ be an algebra and $\psi\in\SLF(A)$. We say that $\psi$ is \textbf{non-degenerate} if
\begin{align*}
\{x\in A:\psi(xA)=0\}\equiv\{x\in A:\psi(xy)=0,\forall y\in A\}
\end{align*}
is zero.
\end{df}

In the following, $A$ is always assumed to be AUF.

\begin{lm}\label{lb60}
Let $e\in A$ be an idempotent, and let $\psi\in\SLF(A)$. If $\psi$ is non-degenerate, then the restriction $\psi|_{eAe}$ is non-degenerate. Conversely, if $\psi|_{eAe}$ is non-degenerate and $e$ is generating, then $\psi$ is non-degenerate.
\end{lm}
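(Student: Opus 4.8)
The plan is to prove each direction separately, using the pseudotrace machinery. For the first direction, suppose $\psi\in\SLF(A)$ is non-degenerate, and let $x\in eAe$ satisfy $\psi(xy)=0$ for all $y\in eAe$. I want to conclude $x=0$. Since $x=exe$, for an arbitrary $a\in A$ we have $\psi(xa)=\psi(exea)=\psi(xea\cdot e)$ using that $\psi$ is symmetric and $x=ex$ (so $\psi(xa)=\psi(exa)=\psi(xae)$, where I have moved the $e$ around the trace). Thus $\psi(xa)=\psi(x\cdot(eae))$, and $eae\in eAe$, so $\psi(xa)=0$ by hypothesis. Therefore $\psi(xA)=0$, and non-degeneracy of $\psi$ forces $x=0$. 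Hence $\psi|_{eAe}$ is non-degenerate. This direction requires no strong hypotheses beyond $A$ being AUF (indeed, it only uses $x=exe$, which holds since $A$ is almost unital, cf. Rem. \ref{lb48}).

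For the converse, assume $e$ is a generating idempotent and $\psi|_{eAe}$ is non-degenerate; I must show $\psi$ is non-degenerate. Suppose $x\in A$ satisfies $\psi(xA)=0$. The key idea is to use the left coordinate system of the $A$-$(eAe)$ bimodule $Ae$ constructed in Thm. \ref{lb17}: there are partial isometries $u_i\in e_iAe$, $v_i\in eAe_i$ with $u_iv_i=e_i$, $v_iu_i=\eps_{k_i}$, and by the computation in the proof of Thm. \ref{lb42} we have, for the functional $\phi:=\psi|_{eAe}\in\SLF(eAe)$, the identity $\Tr^\phi=\psi$ on all of $A$, where $\Tr^\phi(x)=\sum_i\phi(v_ixu_i)$. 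Now fix $x\in A$ with $\psi(xA)=0$; I want to show $x=0$. It suffices to show each $e_jxe_k=0$ for all $j,k\in\fk I$, since $A=\bigoplus_{j,k}e_jAe_k$.

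The strategy is: for fixed $j,k$ and arbitrary $b\in eAe$, consider $\psi(u_j\, b\, v_k\, x)$. Since $\psi(xA)=0$ and $\psi$ is symmetric, $\psi(u_j b v_k x)=\psi(x u_j b v_k)=0$. On the other hand, $v_k x u_j\in \eps_{k_k}A\eps_{k_j}\subset eAe$, and using the relation $\psi=\Tr^{\psi|_{eAe}}$ together with the partial-isometry relations, I can rewrite $\psi(u_j b v_k x)$ in terms of $\psi|_{eAe}$ evaluated on a product involving $v_k x u_j\in eAe$ and $b\in eAe$; more precisely, $\psi(u_j b v_k x)=\psi(b v_k x u_j)$ (cyclicity, valid since the product lies in $e_j A e_j$ conjugated appropriately — one checks $u_j b v_k x u_j v_j = u_j b v_k x$ and the trace is cyclic here because $\psi|_{e_jAe_j}$ relates to $\psi|_{eAe}$ via the partial isometry $u_j,v_j$), hence $\psi|_{eAe}\big((v_k x u_j)\, b'\big)=0$ for a suitable $b'\in eAe$ ranging over all of $eAe$ as $b$ does. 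Non-degeneracy of $\psi|_{eAe}$ then gives $v_k x u_j=0$. Finally, multiply: $e_k x e_j = u_k v_k x u_j v_j = u_k\cdot 0\cdot v_j = 0$. As $j,k$ were arbitrary, $x=0$.

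The main obstacle I anticipate is bookkeeping in the converse direction: correctly tracking which idempotents $\eps_{k_i}$ vs. $e_i$ appear where, and justifying the cyclic manipulation $\psi(u_j b v_k x)=\psi|_{eAe}(\cdots)$ — this rests on the identity $\psi=\Tr^{\psi|_{eAe}}$ from Thm. \ref{lb42} and a careful insertion of the $u_i,v_i$'s, together with the observation that right-multiplication by $v_j$ (resp. left by $u_j$) conjugates $\psi$ restricted to $e_jAe_j$ into $\psi$ restricted to $\eps_{k_j}A\eps_{k_j}\subset eAe$. Everything else is routine: the forward direction is a two-line symmetry argument, and once the reduction $e_kxe_j = u_k(v_kxu_j)v_j$ is in place, non-degeneracy of $\psi|_{eAe}$ closes the argument immediately.
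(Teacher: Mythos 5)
Your proof is correct, but the converse direction takes a genuinely different route from the paper. The paper argues more abstractly: given $\psi(xA)=0$, it first shows $\psi\big((eaxbe)(eAe)\big)=0$ for all $a,b\in A$ (by symmetry), whence $eaxbe=0$ by non-degeneracy of $\psi|_{eAe}$; then, since $e$ is generating, $Ae$ is a faithful left $A$-module and $eA$ is a faithful right $A$-module (referring ahead to Lem.~\ref{lb54}), which forces $eax=0$ for all $a$ and then $x=0$. You instead unpack the faithfulness by passing to the primitive decomposition $(e_i)_{i\in\fk I}$ and the partial isometries $u_i,v_i$ from Thm.~\ref{lb17}, reducing to $v_kxu_j\in eAe$ and showing each of these is killed by non-degeneracy, then reconstituting $e_kxe_j=u_k(v_kxu_j)v_j=0$. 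Both are valid; the paper's version is shorter and avoids explicit bookkeeping with the $u_i,v_i$, while yours is more self-contained in that it does not invoke a separate faithfulness lemma.

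One cleanup point on your side: the identity $\psi(u_jbv_kx)=\psi(bv_kxu_j)$ is just the symmetry of $\psi$ on $A$ applied to the splitting $u_j\cdot(bv_kx)$, and similarly $\psi(bv_kxu_j)=\psi\big((v_kxu_j)b\big)$. You do not need to invoke $\psi=\Tr^{\psi|_{eAe}}$ from Thm.~\ref{lb42}, nor the auxiliary claim $u_jbv_kxu_jv_j=u_jbv_kx$ (which is in fact false for general $x$, since it would require $xe_j=x$). The parenthetical justification in your writeup should simply be deleted; the steps themselves stand on the SLF property alone.
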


\begin{proof}
Assume that $\psi$ is non-degenerate. Choose $x\in eAe$ such that $\psi(xeAe)=0$. Then
\begin{align*}
\psi(xA)=\psi(exeA)=\psi(xeAe)=0
\end{align*}
and hence $x=0$. Therefore $\psi|_{eAe}$ is non-degenerate.

Conversely, assume that $\psi|_{eAe}$ is non-degenerate and $e$ is generating. Choose $x\in A$ such that $\psi(xA)=0$. Then for each $a,b\in A$,
\begin{align*}
\psi(eaxbe\cdot eAe)=\psi(eaxbeAe)=\psi(xbeAea)=0
\end{align*}
Therefore $eaxbe=0$. Since $b$ is arbitrary, we have  $eaxAe=0$. Since $e$ is generating, it is not hard to show that the left $A$-module $Ae$ is faithful. (See for example Lem. \ref{lb54}.) It follows that $eax=0$. Therefore $eAx=0$. Similarly, $eA$ is a faithful right $A$-module. Hence $x=0$. This proves the non-degeneracy of $\psi$.
\end{proof}

\begin{pp}
Assume that $\psi\in\SLF(A)$ is non-degenerate. Let $M\in\Coh(A)$ be projective, and let $B=\End_{A,-}(M)^\opp$.  Then the right pseudotrace ${}^\psi\Tr\in\SLF(B)$ is non-degenerate.
\end{pp}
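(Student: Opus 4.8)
The plan is to reduce to the model module $\wt M=(Ae)^{\oplus n}$, for which the right pseudotrace was computed explicitly, and then transport non-degeneracy through the reduction. Since $M\in\Coh(A)$ is projective, Rem.~\ref{lb35} supplies an idempotent $e\in A$ and an $n\in\Zbb_+$ together with an epimorphism $\wt M=(Ae)^{\oplus n}\twoheadrightarrow M$; projectivity of $M$ makes this epimorphism split, so $M$ is a direct summand of $\wt M$. Put $\wt B=\End_{A,-}(\wt M)^\opp=eAe\otimes\Cbb^{n\times n}$ (cf.~Exp.~\ref{lb47}). By Prop.~\ref{lb30} there is an idempotent $p\in\wt B$ with $M\simeq\wt Mp$ and with $B=\End_{A,-}(M)^\opp\simeq p\wt Bp$. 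As $\wt M$ is $A$-projective, Prop.~\ref{lb40} gives ${}^\psi\Tr_M={}^\psi\Tr_{\wt M}\big|_{p\wt Bp}$, and Exp.~\ref{lb47} identifies ${}^\psi\Tr_{\wt M}=\psi|_{eAe}\otimes\trc$. Hence it suffices to show that the symmetric linear functional $\big(\psi|_{eAe}\otimes\trc\big)\big|_{p\wt Bp}$ on $p\wt Bp$ is non-degenerate.

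This in turn I would deduce from three elementary non-degeneracy facts, applied in succession. First, $\psi|_{eAe}$ is non-degenerate: this is exactly the first assertion of Lem.~\ref{lb60}, whose proof uses only that $\psi$ is symmetric, so no genericity of $e$ is required. Second, if $\omega$ is a non-degenerate symmetric linear functional on an algebra $R$, then $\omega\otimes\trc$ is non-degenerate on $R\otimes\Cbb^{n\times n}$: given a nonzero matrix $z=(z_{kl})$ with, say, $z_{k_0l_0}\neq0$, choose $r\in R$ with $\omega(z_{k_0l_0}r)\neq0$ and test $z$ against the elementary matrix $rE_{l_0k_0}$, for which a one-line computation gives $(\omega\otimes\trc)\big(z\cdot rE_{l_0k_0}\big)=\omega(z_{k_0l_0}r)\neq0$. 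Third, if $\omega$ is a non-degenerate symmetric linear functional on a unital algebra $\wt B$ and $p\in\wt B$ is an idempotent, then $\omega|_{p\wt Bp}$ is non-degenerate: if $y=pyp$ satisfies $\omega(y\cdot p\wt Bp)=0$, then for every $b\in\wt B$ symmetry of $\omega$ gives $\omega(yb)=\omega\big(y\,(pbp)\big)\in\omega(y\cdot p\wt Bp)=0$, whence $\omega(y\wt B)=0$ and so $y=0$ — this is the same corner-passing argument already used in the proof of Lem.~\ref{lb60}. Chaining the second fact (with $R=eAe$) into the third (with $\wt B=eAe\otimes\Cbb^{n\times n}$ and the idempotent $p$ above) yields the non-degeneracy of ${}^\psi\Tr_M$ on $B$.

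The genuinely new input is concentrated in the second and third facts; everything else is bookkeeping already available from Sections~7--9 together with the computation of ${}^\psi\Tr_{(Ae)^{\oplus n}}$. I expect the third fact — that a non-degenerate symmetric functional restricts to a non-degenerate one on every corner $p\wt Bp$ — to be the conceptual heart of the argument, even though its proof is short; the point requiring care is merely to confirm that the reduction $M\rightsquigarrow\wt Mp$ together with Prop.~\ref{lb40} genuinely realizes ${}^\psi\Tr_M$ as a restriction of ${}^\psi\Tr_{\wt M}$, so that the corner fact is applicable.
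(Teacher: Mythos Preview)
Your proposal is correct and follows essentially the same route as the paper: realize $M$ as a direct summand $(Ae)^{\oplus n}p$, identify ${}^\psi\Tr_M$ with $(\psi|_{eAe}\otimes\trc)|_{p\wt Bp}$ via Exp.~\ref{lb47} and Prop.~\ref{lb40}, and then pass non-degeneracy through the chain $\psi\rightsquigarrow\psi|_{eAe}\rightsquigarrow\psi|_{eAe}\otimes\trc\rightsquigarrow(\psi|_{eAe}\otimes\trc)|_{p\wt Bp}$. The paper packages your second and third ``elementary facts'' slightly differently---it simply asserts the tensor step and invokes Lem.~\ref{lb60} twice for the two restriction steps---but the content is identical.
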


\begin{proof}
By Prop. \ref{lb2}, $M$ can be viewed as a direct summand of $\bigoplus_{i=1}^n Ae_i$ where each $e_i\in A$ is an idempotent. Let $e\in A$ be an idempotent such that $e\geq e_i$ for all $i$. Then $M$ is a direct summand of $(Ae)^{\oplus n}$. By Prop. \ref{lb6}, we have $\End_{A,-}(Ae)^\opp=eAe$, and hence $\End_{A,-}((Ae)^{\oplus n})^\opp= eAe\otimes\Cbb^{n\times n}$. By Prop. \ref{lb30}, there is an idempotent $p\in eAe\otimes\Cbb^{n\times n}$ such that $M=(Ae)^{\oplus n}p$. By Lem. \ref{lb60}, $\psi|_{eAe}$ is non-degenerate, and hence $\psi|_{eAe}\otimes\trc:eAe\otimes\Cbb^{n\times n}\rightarrow\Cbb$ is non-degenerate. By Lem. \ref{lb60} again, the restriction of $\psi|_{eAe}\otimes\trc$ to $p(eAe\otimes\Cbb^{n\times n})p$ (which is $B$ due to Prop. \ref{lb30}) is non-degenerate. But this restriction is exactly ${}^\psi\Tr$ due to Exp. \ref{lb47} and Prop. \ref{lb40}.
\end{proof}

\begin{thm}\label{lb64}
Assume that $A$ is strongly AUF. Then in Thm. \ref{lb44}, for any $\psi\in\SLF(A)$, the non-degeneracy of $\psi$ and of ${^\psi}\Tr$ are equivalent.
\end{thm}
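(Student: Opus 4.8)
The plan is to deduce the equivalence from the reduction already carried out in the proof of Thm.~\ref{lb44}, by observing that non-degeneracy is simultaneously preserved and reflected at each step of that reduction. One half, ``$\psi$ non-degenerate $\Rightarrow {}^\psi\Tr$ non-degenerate'', is exactly the proposition immediately preceding this theorem; so only the converse is genuinely new, but it is cleanest to handle both directions together.

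First I would recall the reduction from the proof of Thm.~\ref{lb44}: after replacing $M$ by an isomorphic $A$-$B$ bimodule we may assume that $G=(Ae)^{\oplus n}$ for some $n\in\Zbb_+$ and some generating idempotent $e\in A$, that $p$ is a generating idempotent of the unital finite-dimensional (hence AUF) algebra $\wtd B:=\End_{A,-}(G)^\opp=eAe\otimes\Cbb^{n\times n}$, that $M=Gp$, and that $B=p\wtd Bp$. The key input, established in that proof via Exp.~\ref{lb47} and Prop.~\ref{lb40}, is the identity
\begin{align*}
{}^\psi\Tr=\big(\psi|_{eAe}\otimes\trc\big)\big|_{B}.
\end{align*}

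Next I would chain three equivalences. (i) Since $e$ is a generating idempotent of $A$, Lem.~\ref{lb60} gives: $\psi$ non-degenerate on $A$ $\iff$ $\psi|_{eAe}$ non-degenerate on $eAe$. (ii) For $\omega\in\SLF(eAe)$: $\omega$ non-degenerate $\iff$ $\omega\otimes\trc$ non-degenerate on $eAe\otimes\Cbb^{n\times n}$; indeed, writing a general element as $\sum_{i,j}z_{ij}\otimes E_{ij}$, pairing it (under $\omega\otimes\trc$) on the right against $a\otimes E_{kl}$ gives $\omega(z_{lk}a)$, while pairing $x\otimes E_{11}$ on the right against $a\otimes E_{ij}$ gives $\delta_{1i}\delta_{1j}\,\omega(xa)$, and both implications drop out at once. (iii) Since $p$ is a generating idempotent of $\wtd B$, Lem.~\ref{lb60} applied to $\wtd B$ gives: a symmetric linear functional on $\wtd B$ is non-degenerate $\iff$ its restriction to the corner $p\wtd Bp=B$ is non-degenerate. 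Feeding (i), (ii), (iii) through the displayed identity for ${}^\psi\Tr$ yields ``$\psi$ non-degenerate $\iff {}^\psi\Tr$ non-degenerate'', which is the assertion.

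I do not expect a serious obstacle; the work is bookkeeping. One must verify that the isomorphism $M\simeq Gp$ in the reduction carries the pseudotrace on $B$ to the pseudotrace on $p\wtd Bp$ — this is precisely Prop.~\ref{lb40} combined with Exp.~\ref{lb47}, exactly as used in the proof of Thm.~\ref{lb44} — and one must keep track, in each invocation of Lem.~\ref{lb60}, that the relevant idempotent ($e$ in $A$, and $p$ in $\wtd B$) is indeed generating, which is what upgrades the always-valid ``restrict to a corner'' implication to a genuine equivalence. Step (ii) is routine linear algebra over the matrix units $E_{ij}$.
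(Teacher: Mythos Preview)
Your proposal is correct and follows essentially the same approach as the paper's own proof: reduce via the identity ${}^\psi\Tr=(\psi|_{eAe}\otimes\trc)|_B$ from the proof of Thm.~\ref{lb44}, then chain the equivalences (i) Lem.~\ref{lb60} with the generating idempotent $e\in A$, (ii) the obvious matrix-tensor step, and (iii) Lem.~\ref{lb60} with the generating idempotent $p\in\wtd B$. Your write-up is slightly more explicit in step~(ii) than the paper's one-word ``obvious,'' but the structure is identical.
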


\begin{proof}
We use the notation in the proof of Thm. \ref{lb44}. From that proof, we know ${^\psi}\Tr=(\psi|_{eAe}\otimes\trc)|_B$. By Lem. \ref{lb60}, $\psi$ is non-degenerate iff $\psi|_{eAe}$ is so, and $\psi|_{eAe}\otimes\trc$ is non-degenerate iff $(\psi|_{eAe}\otimes\trc)|_B$ is so. The equivalence of the non-degeneracy of $\psi|_{eAe}$ and of $\psi|_{eAe}\otimes\trc$ is obvious. The proof is finished. 
\end{proof}

\section{Classification of strongly AUF algebras}\label{lb63}

In this section, we fix an AUF algebra $A$. 

\begin{df}
For each left $A$-module $M$, let $M^*$ be the space of linear functionals, which has a right $A$-module structure defined by
\begin{align*}
(\phi a)(m)=\phi(am)\qquad \text{for all }a\in A,m\in M
\end{align*}
We define the \textbf{quasicoherent dual}
\begin{align*}
M^\vee=&\{\phi\in M^*:\phi\in\phi\cdot A\}\\
=&\{\phi\in M^*:\text{there exists an idempotent $e\in A$ such that $\phi=\phi e$}\}
\end{align*}
By Def. \ref{lb18}, $M^\vee$ is the largest right $A$-submodule of $M^*$ that is quasicoherent.
\end{df}

\begin{rem}\label{lb57}
Let $M\in\QC(A)$. Let $(e_i)_{i\in\fk I}$ be as in Def. \ref{lb4}. Then, as vector spaces, we clearly have
\begin{align*}
M=\bigoplus_{i\in\fk I}e_iM\qquad M^*=\prod_{i\in\fk I}(e_iM)^*
\end{align*}
It follows easily that
\begin{align*}
M^\vee=\bigoplus_{i\in\fk I}(e_iM)^*
\end{align*}
\end{rem}

\begin{df}
For each $M\in\QC(A)$, we let
\begin{align*}
\End^0(M)=M\otimes_\Cbb M^\vee
\end{align*}
viewed as a subalgebra of $\End(M)$.\footnote{That is, for each $\xi\in M,\phi\in M^\vee$, the operator $\xi\otimes\phi$ sends each $\eta\in M$ to $\phi(\eta)\cdot\xi$.} If $B$ is an algebra, and $M$ has a right $B$-module structure commuting with the left action of $A$, we let
\begin{align}
\End^0_{-,B}(M)=\{T\in\End^0(M):(T\xi)b=T(\xi b)\text{ for all }\xi\in M,b\in B\}
\end{align}
\end{df}

\begin{rem}\label{lb58}
Let $M\in\Coh(A)$. By Rem. \ref{lb56} we have $\dim e_iM<+\infty$. It follows from Rem. \ref{lb57} that
\begin{align*}
\End^0(M)=\{T\in\End(M):Te_i=0\text{ for all but finitely many }i\in\fk I\}
\end{align*}
\end{rem}

\begin{pp}\label{lb51}
Choose $M\in \Coh(A)$, and let $B=\End_{A,-}(M)^\opp$. Then for each generating idempotent $p\in B$, we have a linear isomorphism
\begin{gather}\label{eq11}
\begin{gathered}
	\End_{-,B}^0(M)\xlongrightarrow{\simeq} \End_{-,pBp}^0(Mp)\qquad S\mapsto S\big|_{Mp}
\end{gathered}
\end{gather}
\end{pp}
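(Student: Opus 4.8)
The plan is to build the inverse map explicitly and check it is well-defined, mirroring the structure of Prop. \ref{lb40}, Thm. \ref{lb45}, and Prop. \ref{lb51}'s statement about $Mp$. First I would record the key identity, already implicit in the paper: since $p$ is a generating idempotent of the finite-dimensional unital algebra $B$ (note $\dim B<+\infty$ by Prop. \ref{lb29}), the argument in the proof of Thm. \ref{lb17}/Thm. \ref{lb45} produces finitely many $u_k\in q_kBp_k$, $v_k\in p_kBq_k$ with $v_ku_k=p_k$, $u_kv_k=q_k$, where $1_B=\sum_k q_k$ is a primitive orthogonal decomposition and $p_k\leq p$. The crucial consequence is that $\sum_k u_k v_k = \sum_k q_k = 1_B$ acts as the identity on $M$, while each $u_k$ (right multiplication) maps $M$ into $Mp_k\subset Mp$ and each $v_k$ maps $Mp$ back into $M$. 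In other words, $M$ is recovered from $Mp$ by the ``coordinate system'' $(u_k,v_k)$, with the roles analogous to $\beta_j,\wch\beta^j$ but on the $B$-side.

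Next I would define the candidate inverse of \eqref{eq11}. Given $T\in\End^0_{-,pBp}(Mp)$, set
\begin{align*}
\Theta(T)=\sum_k u_k\,T\, v_k\ \in\ \End(M)
\end{align*}
where I read $u_k,v_k$ as right-multiplication operators on $M$ (resp. $Mp$); this makes sense because $v_k$ lands in $M$ but actually in $Mp_k\subset Mp$... wait, $v_k\in p_kBq_k$ sends $Mq_k\to Mp_k$, so $Tv_k$ is defined, and then $u_k(Tv_k)$ lands back in $Mq_k$. I would then check three things. (i) $\Theta(T)$ commutes with the left $A$-action: each $u_k,v_k$ is a right $B$-multiplication hence $A$-linear, and $T$ is $A$-linear on $Mp$, so the composite is $A$-linear on $M$. (ii) $\Theta(T)$ lies in $\End^0(M)$: since $T\in\End^0(Mp)$, by Rem. \ref{lb58} $T$ kills all but finitely many $e_iMp$; because $v_k$ is right-$B$-multiplication it commutes with the $e_i$ (which are in $A$), so $\Theta(T)e_i = \sum_k u_k T v_k e_i = \sum_k u_k T e_i v_k = 0$ for all but finitely many $i$, using that each $e_iMp$ is finite-dimensional and only finitely many survive. (iii) $\Theta(T)$ is right-$B$-linear: for $b\in B$, expand $b=\sum_{k,l} q_k b q_l = \sum_{k,l} u_k v_k b u_l v_l$ and use that $v_k b u_l\in p_k B p_l\subset pBp$ commutes with $T$ (which is $pBp$-linear on $Mp$), together with $v_k u_k = p_k$; this is the routine bookkeeping computation, entirely parallel to the symmetry computation in Prop. \ref{lb37}.

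Then I would verify $\Theta$ and $S\mapsto S|_{Mp}$ are mutually inverse. For $S\in\End^0_{-,B}(M)$: $\Theta(S|_{Mp}) = \sum_k u_k (S|_{Mp}) v_k = \sum_k u_k S v_k = \sum_k S u_k v_k = S\sum_k q_k = S$, using that $S$ is right-$B$-linear so commutes with multiplication by $u_k,v_k$. Conversely, for $T\in\End^0_{-,pBp}(Mp)$, restrict $\Theta(T)$ to $Mp$: on $\xi p\in Mp$ we get $\sum_k u_k T(v_k\xi p) = \sum_k u_k T(\xi p\, v_k)$; since $v_k\in p_kBq_k$ we have $p v_k = v_k$ only when... here I need $v_k = p v_k$, which holds because $v_k\in p_kBq_k$ and $p_k\leq p$ gives $p p_k = p_k$, hence $p v_k = p p_k v_k = p_k v_k = v_k$; wait I also need $v_k$ to come back into $Mp$, i.e. $\xi p v_k\in Mp$, which holds since $v_k\in p_kBq_k\subset pBq_k$, giving $\xi p v_k = \xi p v_k q_k$ lies in... actually $p v_k$ means $pBq_k$, not necessarily $pBp$, so $\xi p v_k\in Mq_k$, and then $T$ is only defined on $Mp$. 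I think the cleaner route is: $v_k u_k = p_k\leq p$, so $u_k\colon Mq_k\to Mp_k\subseteq Mp$ is injective with left inverse $v_k$, and restricting $\Theta(T)$ to $Mp$ and using $T$'s $pBp$-linearity gives $\Theta(T)|_{Mp} = \sum_k u_k T v_k|_{Mp} = \sum_k T(u_k v_k)|_{Mp}$ — valid because $u_kv_k=q_k\in pBp$? No: $q_k$ need not be $\leq p$. So this last identity fails in general, and the honest computation must use $v_k|_{Mp}\colon Mp\to Mp_k$ followed by $u_k\colon Mp_k\to Mq_k$ followed by inclusion, and then sum. The main obstacle, I expect, is exactly this: showing $\Theta(T)|_{Mp}=T$ requires care because the $q_k$ are not sub-idempotents of $p$, so one cannot simply slide $T$ past $u_kv_k$. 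The fix is to observe $\sum_k v_k u_k = \sum_k p_k$ is an idempotent $\geq$ something acting as identity on... no — rather, one shows directly that on $Mp$, $\sum_k u_k(\,\cdot\,)v_k$ followed by restriction equals $\sum_{k,l}$ of pieces $u_k T (v_k u_l)|_{Mp_l} \cdots$; using $v_k u_l \in p_k B p_l\subset pBp$ and $pBp$-linearity of $T$, this collapses via $\sum_l v_l u_l\big|_{Mp} = \big(\sum_l p_l\big)\big|_{Mp}$ acting as the identity (since $\sum_l u_lv_l=1_B$ and a short manipulation shows $\sum_l p_l$ acts as $1$ on the image $Mp$... this needs the generating property once more). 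I would isolate this as the one genuinely delicate lemma and prove it by the same dévissage used in Thm. \ref{lb17}, then the rest is formal. Linearity of $\Theta$ is clear, finishing the proof that \eqref{eq11} is a linear isomorphism with inverse $\Theta$.
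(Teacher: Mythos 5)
Your approach---reconstruct an operator on $M$ from one on $Mp$ by ``sandwiching'' with partial isometries obtained from the generating property---is the same idea as the paper's. The paper phrases it slightly differently: working in $\wht B=B^\opp$, it takes a primitive orthogonal decomposition of the \emph{complement} $1_{\wht B}-\wht p=f_1+\cdots+f_n$, finds partial isometries $u_i,v_i$ with $v_iu_i=q_i\leq\wht p$ and $u_iv_i=f_i$, and defines $S(\xi)=T(\wht p\xi)+\sum_i u_iT(v_i\xi)$. Because each $v_i$ kills $\wht pM$ (the $f_i$ are orthogonal to $\wht p$), the identity $S|_{\wht pM}=T$ is \emph{immediate}; the real work is checking $S$ commutes with $\wht B$, which is done via the decomposition $b=\sum_{i,j=0}^n u_ib_{i,j}v_j$ with $b_{i,j}\in\wht p\wht B\wht p$ (setting $f_0=u_0=v_0=\wht p$). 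You instead decompose all of $1_B=\sum_kq_k$ and take $p_k\leq p$ with $v_ku_k=p_k$, $u_kv_k=q_k$, which is morally equivalent but forfeits the ``restriction is trivially $T$'' feature.

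The genuine gap is exactly the step you flag and then fail to close: verifying $\Theta(T)\big|_{Mp}=T$. Your proposed fix is incorrect: ``$\sum_l p_l$ acts as $1$ on $Mp$'' is false in general, since the $p_l$ are merely \emph{some} sub-idempotents of $p$, and nothing in the construction forces $\sum_l p_l=p$. The correct argument is more elementary and does not re-invoke the generating property: since $u_k\in q_kBp_k$ and $p_k\leq p$, one has $u_k=u_kp$, hence $pu_k\in pBp_k\subset pBp$. For $\eta\in Mp$ this gives
\begin{align*}
\Theta(T)(\eta)=\sum_k T(\eta u_k)v_k=\sum_k T\big(\eta\cdot(pu_k)\big)v_k
=\sum_k T(\eta)(pu_k)v_k=T(\eta)\,p\sum_k u_kv_k=T(\eta)\,p=T(\eta),
\end{align*}
using $\eta=\eta p$, the $pBp$-linearity of $T$ to slide $T$ past $pu_k\in pBp$, $\sum_ku_kv_k=1_B$, and $T(\eta)\in Mp$. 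The rest of your outline (well-definedness of $\Theta(T)$, injectivity, $\Theta(S|_{Mp})=S$) is correct, though the $B$-linearity of $\Theta(T)$, which you wave off as ``routine bookkeeping,'' requires the same decomposition $b=\sum_{k,l}u_k(v_kbu_l)v_l$ with $v_kbu_l\in pBp$ that the paper uses; it is not shorter than the paper's check that $S$ commutes with $\wht B$.
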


\begin{proof}
Step 1. Let $\wht B=B^\opp=\End_{A,-}(M)$, and let $\wht p\in\wht B$ be the opposite element of $p$. Then $M$ has a left $\wht B$-module structure commuting with the left action of $A$, and $R_p$ is the left multiplication by $\wht p$.

For each $S\in\End^0_{-,B}(M)$, note that $S|_{Mp}=S|_{\wht pM}$ maps $\wht pM$ into $\wht pM$, because $S\wht p\xi=\wht pS\xi\in\wht pM$ for each $\xi\in M$. It is clear that $S|_{Mp}$ commutes with the action of $\wht p\wht B\wht p$. It can be checked from Rem. \ref{lb58} that $S|_{Mp}$ belongs to $\End^0(M)$. This proves that $S|_{Mp}$ belongs to $\End^0_{-,pBp}(Mp)$. We have thus proved that the linear map \eqref{eq11} is well-defined.\\[-1ex]

Step 2. Let us prove the surjectivity of \eqref{eq11}. By Prop. \ref{lb29}, $B$ is finite-dimensional. Therefore, we have an orthogonal primitive decomposition $1_{\wht B}-\wht p=f_1+\cdots +f_n$ in $\wht B$. In this case, we have 
\begin{align*}
M=\wht pM\oplus f_1 M \oplus \cdots \oplus f_n M
\end{align*}
By Prop. \ref{lb16}, for each $1\leq i\leq n$, $f_i$ is equivalent to a sub-idempotent $q_i$ of $\wht p$, i.e., there exist $u_i\in f_i\wht B q_i$ and $v_i\in q_i\wht B f_i$ such that $u_iv_i=f_i$ and $v_iu_i=q_i\leq \wht p$ (where $q_i\in\wht B$ is an idempotent).

Now, we choose $T\in\End_{-,pBp}^0(Mp)=\End_{-,pBp}^0(\wht pM)$. Define a linear map 
	\begin{gather}\label{eq12}
	\begin{gathered}
		S:M\rightarrow M\qquad
		\xi\mapsto T(\wht p\xi)+\sum_{i=1}^n u_i T(v_i\xi)
	\end{gathered}
\end{gather}
By Rem. \ref{lb58}, we have $S\in\End^0(M)$. We claim that $S$ commutes with the action of $\wht B$ (and hence $S\in \End_{-,B}^0(M)$). If this is proved, then since $T$ clearly equals $S|_{Mp}=S|_{\wht pM}$ (because $v_i\wht p=0$, see below), the proof of the surjectivity of \eqref{eq11} is complete.

Note that since $\wht p,f_1,\dots,f_n$ are mutually orthogonal, we have
\begin{gather*}
u_iu_j=0\qquad v_iv_j=0\qquad\text{for all }i,j\\
v_ju_i=0\qquad\text{for all } i\neq j\\
v_i\wht p=0\qquad \wht pu_i=0\qquad\text{for all }i
\end{gather*}
Using this observation and the fact that $T:\wht pM\rightarrow\wht pM$ commutes with the left action of $\wht p\wht B\wht p$, we compute that for each $j$ and $\xi\in M$,
  \begin{gather*}
	S(v_j\xi)=T(\wht pv_j \xi)+0= T(v_j \xi)\\
	v_j S(\xi)=v_jT(\wht p\xi)+v_ju_j T(v_j\xi)\xlongequal{v_ju_j=q_j\in \wht p\wht B\wht p}0+T(q_jv_j\xi)=T(v_j \xi)
  \end{gather*}
and hence $S(v_j\xi)=v_j S(\xi)$; similarly,
  \begin{gather*}
	S(u_j \xi)=T(\wht pu_j \xi)+u_jT(v_ju_j\xi)\xlongequal{v_ju_j=q_j\in \wht p\wht B\wht p}  0+u_jq_jT(\wht p\xi)=u_j T(\wht p\xi)\\
	u_j S(\xi)=u_j T(\wht p\xi)+0=u_j T(\wht p\xi)
  \end{gather*}
and hence $S(u_j\xi)=u_j S(\xi)$. Moreover, for each $b\in\wht B$ we have
\begin{gather*}
S(\wht pb\wht p\xi)=T(\wht pb\wht p\xi)+0=\wht pb\wht pT(\wht p\xi)\\
\wht pb\wht pS(\xi)=\wht pb\wht pT(\wht p\xi)+0=\wht pb\wht pT(\wht p\xi)
\end{gather*}
and hence $S(\wht pb\wht p\xi)=\wht pb\wht pS(\xi)$. This proves that $S$ commutes with the left action of $\wht B$, since $\wht B$ is generated by $\{u_i,v_i:1\leq i\leq n\}$ and $\wht p\wht B\wht p$---to see this, note that for each $b\in\wht B$, by setting $f_0=u_0=v_0=\wht p$, we have
\begin{align*}
b=\sum_{i,j=0}^n f_ibf_j=\sum_{i,j=0}^nu_ib_{i,j}v_j
\end{align*}
where each $b_{i,j}:=v_ibu_j$ commutes with the left actions of $A$ and satisfies $b_{i,j}=\wht pb_{i,j}\wht p$, and hence belongs to $\wht p\wht B\wht p$. \\[-1ex]

Step 3. If $S\in\End^0_{-,B}(M)$ and $S|_{\wht pM}=0$, then for each $\xi\in M$, we have
\begin{align*}
S(\xi)=S(\wht p\xi)+\sum_{i=1}^n S(f_i\xi)=S(\wht p\xi)+\sum_{i=1}^n u_iS(v_i\xi)
\end{align*}
where $\wht p\xi,v_j\xi\in\wht p M$. Therefore $S=0$. This proves that \eqref{eq11} is injective.
\end{proof}

\begin{lm}\label{lb54}
Suppose that $e\in A$ is a generating idempotent. Then we have a linear isomorphism
\begin{align}\label{eq15}
A\xlongrightarrow{\simeq}\End_{-,eAe}^0(Ae)
\end{align}
sending each $a\in A$ to the left multiplication by $a$.
\end{lm}

\begin{proof}
It is obvious that the left action on $Ae$ by $a\in A$ belongs to $\End^0_{-,eAe}(Ae)$. Therefore, the map \eqref{eq15} is well-defined.

Suppose that the left multiplication of $a\in A$ on $Ae$ is zero. Then $aAe=0$. Since $A$ is AUF and hence almost unital, there is an idempotent $p\in A$ such that $a=ap$. Since $e$ is generating, by Cor. \ref{lb32}, $Ae$ is a generator of $\Coh(A)$. Therefore, $Ap$ is a quotient module of $(Ae)^{\oplus n}$ for some $n\in\Zbb_+$. Thus $aAp$ is a quotient space of $(aAe)^{\oplus n}$, and hence $aAp=0$. This proves $ap=0$, and hence $a=0$. We have thus proved that \eqref{eq15} is injective.

Choose $T\in \End_{-,eAe}^0(Ae)$. Since $T\in\End^0(Ae)$, by Rem. \ref{lb57}, there is an idempotent $f\in A$ such that $T=fTf$. It follows that $fTf|_{fAe}$ belongs to $\End_{-,eAe}(fAe)$. Since $A$ is AUF, we may enlarge $f$ so that $e\leq f$ also holds. We claim that $\End_{-,eAe}(fAe)$ consists of the left multiplications by elements of $fAf$. If this is true, then $T|_{fAe}=fTf|_{fAe}$ is the left multiplication by $faf$ for some $a\in A$. It follows that for any $b\in A$, we have $Tbe=Tfbe=fafbe$, and hence $T$ is the left multiplication by $faf$ on $Ae$, finishing the proof that \eqref{eq15} is surjective. 

By Cor. \ref{lb59}, the idempotent $e\in fAf$ is generating in $fAf$. Applying Prop. \ref{lb51} to the finite-dimensional unital algebra $fAf$ and its (finite-dimensional) coherent left module $fAf$, we see that $\End_{-,eAe}(fAe)=fAf|_{fAe}$. This proves the claim.
\end{proof}

\begin{thm}\label{lb61}
Suppose that $A$ is strongly AUF, and let $G$ be a projective generator of $\Coh(A)$ (which exists due to Prop. \ref{lb50}). Set $B=\End_{A,-}(G)^\opp$. Regard $G$ as an $A$-$B$ bimodule. Then we have a linear isomorphism
\begin{align}\label{eq16}
A\xlongrightarrow{\simeq}\End^0_{-,B}(G)
\end{align}
sending each $a\in A$ to the left multiplication of $a$ on $G$.
\end{thm}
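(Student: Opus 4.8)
The plan is to reduce the statement to two facts already at our disposal: Lemma~\ref{lb54}, which is exactly \eqref{eq16} in the special case $G=Ae$ for a generating idempotent $e\in A$ (such an $e$ exists because $A$ is strongly AUF, Cor.~\ref{lb33}); and Proposition~\ref{lb51}, which identifies $\End^0_{-,\wtd B}(N)$ with a corner $\End^0_{-,p\wtd Bp}(Np)$ cut out by a generating idempotent $p$. The bridge between them is a routine matrix-amplification of Lemma~\ref{lb54}.

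By Corollary~\ref{lb43} (and the remark following it), I may assume $G=Np$, where $N=(Ae)^{\oplus n}$ for some generating idempotent $e\in A$ and $n\in\Zbb_+$, where $\wtd B:=\End_{A,-}(N)^\opp=eAe\otimes\Cbb^{n\times n}$, where $p\in\wtd B$ is a generating idempotent, and where $B=p\wtd Bp$ by Proposition~\ref{lb30}; under this identification the map \eqref{eq16} is $a\mapsto L_a|_{Np}$, where $L_a$ denotes left multiplication by $a$. I would then prove that $A\to\End^0_{-,\wtd B}(N)$, $a\mapsto L_a|_N$, is a linear isomorphism. The key observation is that, since $\wtd B=\End_{A,-}(N)^\opp$, an element of $\End^0_{-,\wtd B}(N)$ is precisely an operator in $\End^0(N)$ that commutes with every $A$-module endomorphism of $N$. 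Now $\End_{A,-}(N)$ is generated as an algebra by the $n^2$ ``matrix units'' permuting the summands and by the diagonal copies of $\End_{A,-}(Ae)$; commuting with the former forces such an operator to have the form $T_0^{\oplus n}$ for a single $T_0\in\End(Ae)$, the condition $T\in\End^0(N)$ becomes $T_0\in\End^0(Ae)$ by Remark~\ref{lb58}, and commuting with the latter (i.e.\ with $\End_{A,-}(Ae)^\opp\simeq eAe$, Prop.~\ref{lb6}) becomes $T_0\in\End^0_{-,eAe}(Ae)$. Conversely $T_0\mapsto T_0^{\oplus n}$ carries $\End^0_{-,eAe}(Ae)$ into $\End^0_{-,\wtd B}(N)$, so $T_0\mapsto T_0^{\oplus n}$ is an isomorphism $\End^0_{-,eAe}(Ae)\xrightarrow{\simeq}\End^0_{-,\wtd B}(N)$; precomposing with the isomorphism $A\xrightarrow{\simeq}\End^0_{-,eAe}(Ae)$, $a\mapsto L_a|_{Ae}$, of Lemma~\ref{lb54} gives $A\xrightarrow{\simeq}\End^0_{-,\wtd B}(N)$, $a\mapsto L_a|_N$.

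Finally, I would apply Proposition~\ref{lb51} to $N$, its endomorphism algebra $\wtd B$, and the generating idempotent $p$, obtaining a linear isomorphism $\End^0_{-,\wtd B}(N)\xrightarrow{\simeq}\End^0_{-,p\wtd Bp}(Np)=\End^0_{-,B}(G)$, $S\mapsto S|_{Np}$; composing with the isomorphism from the previous paragraph yields $A\xrightarrow{\simeq}\End^0_{-,B}(G)$, $a\mapsto L_a|_G$, which is exactly left multiplication of $a$ on $G$. The main obstacle is the amplification step of the second paragraph: although elementary, it is where essentially all the work beyond invoking Lemma~\ref{lb54} and Proposition~\ref{lb51} lies, and one must take care that the finite-support normalization built into $\End^0$ is preserved under $T_0\mapsto T_0^{\oplus n}$; the identification $G\simeq Np$ and the compatibility of the bimodule structures under it is then pure bookkeeping.
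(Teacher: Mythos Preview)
Your proposal is correct and follows essentially the same route as the paper: reduce to $G=(Ae)^{\oplus n}p$ via Cor.~\ref{lb43} and Prop.~\ref{lb30}, establish the isomorphism for $(Ae)^{\oplus n}$ by matrix-amplifying Lemma~\ref{lb54}, and then pass to the corner via Prop.~\ref{lb51}. The only difference is that you spell out the amplification step (commuting with matrix units forces $T=T_0^{\oplus n}$, etc.), whereas the paper dismisses it with ``follows easily''.
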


\begin{proof}
By Cor. \ref{lb33} and Prop. \ref{lb50}, $A$ has a generating idempotent $e$. If $G=Ae$, then $\End_{A,-}(G)^\opp=eAe$ due to Prop. \ref{lb6}. Therefore, by Lem. \ref{lb54}, the map \eqref{eq16} is bijective.

If $G=(Ae)^{\oplus n}$ where $n\in\Zbb_+$, one easily checks that $B=eAe\otimes\Cbb^{n\times n}$ where $\Cbb^{n\times n}$ is the matrix algebra of order $n$. The bijectivity of \eqref{eq16} then follows easily.

Finally, let $G$ be any general projective generator. By Cor. \ref{lb43}, we may assume that $G=(Ae)^{\oplus n}p$ where $n\in\Zbb_+$, and $p$ is a generating idempotent of $\wtd B=\End_{A,-}((Ae)^{\oplus n})^\opp\simeq eAe\otimes\Cbb^{n\times n}$. By Prop. \ref{lb30}, we have $B=p\wtd Bp$. Therefore, by Prop. \ref{lb51}, the map 
\begin{align*}
\End^0_{-,\wtd B}((Ae)^{\oplus n})\rightarrow \End^0_{-,B}(G)
\end{align*}
sending each $S$ to $S|_G$ is bijective. By the previous paragraph, the map
\begin{align*}
A\rightarrow \End^0_{-,\wtd B}((Ae)^{\oplus n})
\end{align*}
sending each $a$ to the left multiplication by $a$ is bijective. Therefore, their composition, namely \eqref{eq16}, is bijective.
\end{proof}

\begin{rem}\label{lb62}
In Thm. \ref{lb61}, the right $B$-module $G$ is a \textbf{projective generator} in the category $\ModR(B)$ of right $B$-modules---that is, $G$ is projective in $\ModR(B)$, and any object in $\ModR(B)$ has an epimorphism from a (possibly infinite) direct sum of $G$.
\end{rem}

\begin{proof}
The projectivity of $G$ in $\ModR(B)$ is due to Thm. \ref{lb44} and Rem. \ref{lb20}. Using the notation in the proof of Thm. \ref{lb61}, we may assume $G=(Ae)^{\oplus n}p$ and $B=p(eAe\otimes\Cbb^{n\times n})p$ where $e\in A$ and $p\in eAe\otimes\Cbb^{n\times n}$ are generating idempotents. Since $B$ is unital, $B$ is generating in $\ModR(B)$. Therefore $(eAe\otimes\Cbb^{n\times n})p$ is generating in $\ModR(B)$. Since $(eAe\otimes\Cbb^{n\times n})p$  is a direct sum of $(eAe\otimes\Cbb^{1\times n})=(eAe)^{\oplus n}p=eG$, we conclude that $eG$ is generating in $\ModR(B)$. Therefore $G$ is generating in $\ModR(B)$.
\end{proof}

\begin{thm}\label{lb65}
Let $\mc A$ be an algebra. The following are equivalent.
\begin{enumerate}[label=(\arabic*)]
\item $\mc A$ is strongly AUF.
\item $\mc A$ is isomorphic to $\End^0_{-,B}(M)$ where $B$ is a unital finite-dimensional algebra, $M$ is a projective generator in $\ModR(B)$, the vector space $M$ has a grading
\begin{align*}
M=\bigoplus_{i\in\fk I}M(i)
\end{align*}
where each $M(i)$ is finite-dimensional and is preserved by the right action of $B$, and $\End^0_{-,B}(M)$ is defined by
\begin{align*}
\End^0_{-,B}(M):=\{T\in\End(M):&(Tm)b=T(mb)\text{ for all $m\in M,b\in B$,}\\
&T|_{M(i)}=0\text{ for all but finitely many }i\in\fk I \}
\end{align*}
\end{enumerate}
\end{thm}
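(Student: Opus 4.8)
The plan is to establish the two implications separately.

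\emph{(1)$\Rightarrow$(2)} is a repackaging of the structure theory already developed. Assume $\mc A$ is strongly AUF and fix a generating idempotent $e\in\mc A$ (Cor. \ref{lb33}). Put $B:=e\mc A e$, which is unital and finite-dimensional by Rem. \ref{lb14}, and $M:=\mc A e$ graded by $M(i):=e_i\mc A e$, where $(e_i)_{i\in\fk I}$ is as in Def. \ref{lb4}. Each $M(i)$ is finite-dimensional (Rem. \ref{lb14}), is stable under the right $B$-action (left multiplication by $e_i$ commutes with right multiplication by $\mc A\supseteq B$), and $M=\bigoplus_i M(i)$ because $\mc A=\bigoplus_{i,j}e_i\mc A e_j$. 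By Lem. \ref{lb54} the left-multiplication map $\mc A\xlongrightarrow{\simeq}\End^0_{-,B}(M)$ is an isomorphism, and since $M=\mc A e\in\Coh(\mc A)$, Rem. \ref{lb58} identifies the algebra $\End^0_{-,B}(M)$ used there (defined via $M\otimes_\Cbb M^\vee$) with the set exhibited in statement (2). Finally, $M=\mc A e$ is a projective generator of $\ModR(B)$ by Rem. \ref{lb62}, applied with $G=\mc A e$ (note $\End_{\mc A,-}(\mc A e)^\opp=e\mc A e=B$ by Prop. \ref{lb6}). This gives (2).

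\emph{(2)$\Rightarrow$(1).} Write $\mc A=\End^0_{-,B}(M)$ as in (2) and let $e_i\in\mc A$ be the projection of $M$ onto $M(i)$ (it lies in $\mc A$ since $M(i)$ is a $B$-submodule and $e_i$ annihilates $M(j)$ for $j\ne i$). First, $\mc A$ is AUF with the orthogonal family $(e_i)_i$: orthogonality is clear, $e_i\mc A e_j\cong\Hom_{-,B}(M(j),M(i))$ is finite-dimensional, and for $T\in\mc A$ there is a finite $I_0$ with $T|_{M(i)}=0$ for $i\notin I_0$; the subspace $T\big(\bigoplus_{i\in I_0}M(i)\big)$ is finite-dimensional, hence lies in $\bigoplus_{j\in J_0}M(j)$ for a finite $J_0$, so $T=e_{J_0}\,T\,e_{I_0}\in\sum_{i,j}e_i\mc A e_j$ (with $e_{I_0}:=\sum_{i\in I_0}e_i$, and similarly for $e_{J_0}$). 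Next we produce a generating idempotent. Since $B$ is unital and finite-dimensional, it has finitely many isomorphism classes of indecomposable projective right modules $P_1,\dots,P_r$ (e.g. by applying Cor. \ref{lb11} to $B^\opp$); since $M$ is a generator of $\ModR(B)$, every simple right $B$-module is a quotient of $M$, and lifting such a quotient through the projective module $M$ and using that the kernel of a projective cover is superfluous (Rem. \ref{lb21}) shows each $P_k$ is a direct summand of $M$. A splitting $P_k\hookrightarrow M$ has finite-dimensional image, hence factors through $\bigoplus_{i\in I_0^{(k)}}M(i)$ for a finite $I_0^{(k)}$; restricting the retraction, $P_k$ is a direct summand of $\bigoplus_{i\in I_0^{(k)}}M(i)$. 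Put $I_0:=\bigcup_k I_0^{(k)}$ and $f:=\sum_{i\in I_0}e_i$, so $fM=\bigoplus_{i\in I_0}M(i)$ is a finite-dimensional projective $B$-module (a summand of $M$) containing every $P_k$ as a summand; fixing a decomposition of $fM$ into indecomposables, the associated primitive decomposition $f=\eps_1+\cdots+\eps_n$ has $\{\eps_l M\}$ running over those summands, so it includes a copy of each $P_k$.

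Now let $p\in\mc A$ be any primitive idempotent. As $\mc A$ is AUF, $p\in e_J\mc A e_J=\End_{-,B}\big(\bigoplus_{j\in J}M(j)\big)$ for some finite $J$, and $p$ is primitive there, so $pM$ is an indecomposable summand of the finite-dimensional projective $B$-module $\bigoplus_{j\in J}M(j)$; thus $pM\cong P_k\cong\eps_l M$ for some $k$ and $l$. Both $p$ and $\eps_l$ lie in the finite-dimensional unital algebra $g\mc A g\cong\End_{-,B}\big(\bigoplus_{i\in J\cup I_0}M(i)\big)$, where $g:=e_{J\cup I_0}$ and the displayed $B$-module is a progenerator over $B$ (it contains each $P_k$); by the classical Morita dictionary for such an endomorphism algebra (cf. Prop. \ref{lb30}, Cor. \ref{lb13}), $p$ and $\eps_l$, having isomorphic image $B$-modules, are equivalent in $g\mc A g$ and hence in $\mc A$. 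Thus every primitive idempotent of $\mc A$ is equivalent to some $\eps_l$, so $f$ is a generating idempotent by Prop. \ref{lb16}, and $\mc A$ is strongly AUF by Cor. \ref{lb33}, proving (1).

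\emph{Main obstacle.} The argument uses no new idea; the work is organizational, in assembling the dictionary used for (2)$\Rightarrow$(1): that $\End^0_{-,B}(M)$ is genuinely AUF — in particular the finite-co-support observation that $T(\bigoplus_{i\in I_0}M(i))$ is finite-dimensional; that every primitive idempotent of $\mc A$ can be cornered into some $e_J\mc A e_J\cong\End_{-,B}(\bigoplus_{j\in J}M(j))$, where it becomes an indecomposable projective $B$-module; and that two such idempotents, once placed in a common finite corner, are equivalent iff their image $B$-modules are isomorphic. Granting this dictionary, finiteness of the number of primitive idempotents of $\mc A$ reduces to the elementary representation theory of the finite-dimensional algebra $B$.
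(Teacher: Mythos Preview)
Your argument is correct and follows essentially the same route as the paper. For (1)$\Rightarrow$(2) the paper simply cites Thm.~\ref{lb61} and Rem.~\ref{lb62}, whose proofs unwind to exactly the construction you give with $G=\mc A e$ and Lem.~\ref{lb54}. For (2)$\Rightarrow$(1) both arguments show $\mc A$ is AUF via the projections $e_i$, then locate a finite $I_0$ so that $e_{I_0}M$ contains every indecomposable projective right $B$-module as a summand, and finally match an arbitrary primitive idempotent of $\mc A$ to a sub-idempotent of $e_{I_0}$; the paper does this last step in one stroke by applying Prop.~\ref{lb30} to conclude that $\eps$ is equivalent to a sub-idempotent of $e_{I_0}$ directly, whereas you take the extra step of fixing a primitive decomposition $f=\eps_1+\cdots+\eps_n$ and placing $p$ and the matching $\eps_l$ in a common finite corner $g\mc A g$ before invoking Cor.~\ref{lb13}. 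Both are valid; the paper's version is slightly more economical.
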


\begin{proof}
The direction (1)$\Rightarrow$(2) follows from Thm. \ref{lb61} and Rem. \ref{lb62}. Let us prove the other direction.

Assume that $\mc A=\End^0_{-,B}(M)$ where $\End^0_{-,B}(M)$ is described as in (2). Let $e_i$ be the projection of $M$ onto $M(i)$. Then $e_i$ clearly belongs to $\mc A$, and each $T\in\mc A$ can be written as $T=\sum_{i,j\in\fk I}e_i Te_j$ where $e_iTe_j=0$ for all but finitely many $i,j$. This proves that $\mc A$ is AUF.

Since $M$ is a projective generator in $\ModR(B)$, for each finite subset $I\subset\fk I$, $M_I:=\bigoplus_{i\in I}M(i)$ is projective in $\ModR(B)$ (since it is a direct summand of $M$). Let $1_B=p_1+\cdots+p_n$ be an orthogonal primitive decomposition of $1_B$ in $B$. By Thm. \ref{lb15}, irreducible finite-dimensional right $B$-modules are precisely those that are isomorphic to $p_kB/\rad(p_kB)$ for some $k$. Since $M$ is generating in $\ModR(B)$, it has an epimorphism to $p_kB/\rad(p_kB)$ for each $k$. This epimorphism must restrict to a nonzero morphism (and hence an epimorphism) $M(i_k)\rightarrow p_kB/\rad(p_kB)$. Let $I=\{i_1,\dots,i_n\}$. Then $M_I$ has an epimorphism to each irreducible right $B$-module. It follows from Prop. \ref{lb27} that $M_I$ is a projective generator in the category of finite-dimensional right $B$-modules. 

Let $e_I=\sum_{i\in I}e_i$, which is an idempotent in $\mc A$. We claim that $e_I$ is a generating idempotent in $\mc A$, which will complete the proof that $\mc A$ is strongly AUF.

Let $\eps$ be any primitive idempotent of $\mc A$. Then $\eps M$ is a finite-dimensional right $B$-module, since any element of $\mc A$ has finite range when acting on $M$. Moreover, since $\eps$ is primitive in $\mc A$, the right $B$-module $\eps M$ is indecomposable. Since $\eps M$ is a direct summand of the projective right $B$-module $M$, it follows that $\eps M$ is a finite-dimensional indecomposable projective right $B$-module. Therefore, since $M_I=e_IM$ is a projective generator, similar to the end of the proof of Thm. \ref{lb31}, we conclude that the right $B$-module $\eps M$ is isomorphic to a direct summand of $e_IM$. Thus, by Prop. \ref{lb30}, $\eps$ is equivalent to a sub-idempotent of $e_I$ in $\mc A$. This proves the claim that $e_I$ is generating.
\end{proof}

\footnotesize
	\bibliographystyle{alpha}
%    \bibliography{voa}

\noindent {\small \sc Yau Mathematical Sciences Center, Tsinghua University, Beijing, China.}

\noindent {\textit{E-mail}}: binguimath@gmail.com\qquad bingui@tsinghua.edu.cn\\

\noindent {\small \sc Yau Mathematical Sciences Center and Department of Mathematics, Tsinghua University, Beijing, China.}

\noindent {\textit{E-mail}}: zhanghao1999math@gmail.com \qquad h-zhang21@mails.tsinghua.edu.cn
\end{document}